\newcommand{\pp}{\mathrm}
\newcommand{\vali}[1]{\mathopen{]} #1 \mathclose{[}}
\newcommand{\valir}[1]{\mathopen{[} #1 \mathclose{[}}
\newcommand{\valil}[1]{\mathopen{]} #1 \mathclose{]}}
\newcommand{\LRA}{\Leftrightarrow}
\newcommand{\ad}{\mathrm{ad}}
\newcommand{\e}{\mathrm{e}}
\newcommand{\Id}{\mathrm{Id}}
\newcommand{\Hdim}{\mathcal{H}\text{-}\mathrm{dim}} 
\newcommand{\pois}{\backslash}
\DeclareMathOperator{\diag}{diag}
\newcommand{\matriisi}[2]{\left [\begin{array}{#1} #2 \end{array} \right]}
\DeclarePairedDelimiter\braket{\langle}{\rangle}
\DeclarePairedDelimiter\norm{\lVert}{\rVert}
\newcounter{listoja_varten}
\newenvironment{lista}%
{	\begin{list}{(\roman{listoja_varten})}{\usecounter{listoja_varten}}	}%
	{	\end{list}				}
\theoremstyle{plain}
\newenvironment{customprop}[1]
{\innercustomthmp}
{\endinnercustomthmp}
\newenvironment{customotak}[1]
{\innercustomthmc}
{\endinnercustomthmp}
\theoremstyle{definition}
\newtheorem{maar}{Definition}[section]
\newtheorem{Esim}[maar]{Example}
\newtheorem{Kysy}[maar]{Question}
\theoremstyle{plain}
\newtheorem{lause}[maar]{Theorem}
\newtheorem{lausea}{Theorem}
\newtheorem{prop}[maar]{Proposition}
\newtheorem{lemma}[maar]{Lemma}
\newtheorem{seur}[maar]{Corollary}
\newtheorem{fakta}[maar]{Fact}
\newtheorem*{lausei}{Theorem}
\theoremstyle{remark}
\newtheorem{huom}[maar]{Remark}
\newcommand\YT{\rule{0pt}{2.6ex}}       
\newcommand\AT{\rule[-1.2ex]{0pt}{0pt}} 
\newcommand{\laina}[1]{``#1''}
\begin{document}

\title[Metric equivalences of Heintze groups]{Metric equivalences of Heintze groups and applications to classifications in low dimension}

\author[Kivioja]{Ville Kivioja}

\address[Kivioja and Nicolussi Golo]{
	Department of Mathematics and Statistics, University of Jyv\"askyl\"a, 40014 Jyv\"askyl\"a, Finland}

\author[Le Donne]{Enrico Le Donne}
\address[Le Donne]{Department of Mathematics, University of Fribourg, Fribourg CH-1700, Switzerland}

\author[Golo]{Sebastiano Nicolussi Golo}

\email{kivioja.ville@gmail.com}
\email{enrico.ledonne@gmail.com}

\email{sebastiano2.72@gmail.com}
\thanks{V.K.\ and
E.L.D.\ were partially supported 
by the European Research Council (ERC Starting Grant 713998 GeoMeG `\emph{Geometry of Metric Groups}'). E.L.D.\ and S.N.G.\ were partially supported by the Academy of Finland (grant288501 `\emph{Geometry of subRiemannian groups}' and by grant 322898 `\emph{Sub-Riemannian Geometry via Metric-geometry and Lie- group Theory}'). V.K.\ was also supported by Emil Aaltonen Foundation. 
  }
 
\newcommand{\piste}{\,.}
\newcommand{\pilkku}{\,,}

\newcommand{\reach}[3]{(#1,#2)^{(#3)}}
\newcommand{\ARe}{\alpha_\pp{sr}}
\newcommand{\AIm}{\alpha_\pp{si}}
\newcommand{\Anil}{\alpha_\pp{nil}}
\newcommand{\aaa}{{\alpha_0}}
\newcommand{\conedim}{\pp{conedim}}
\newcommand{\realshadow}{{\mathbb{R}}}

\keywords{isometries, quasi-isometries, homogeneous  groups, Heintze groups}

\renewcommand{\subjclassname}{%
 \textup{2010} Mathematics Subject Classification}
\subjclass[]{20F67, 53C23, 22E25, 17B70, 20F69, 30L10, 54E40%
}

\date{\today}

\begin{abstract}
We approach the quasi-isometric classification questions on Lie groups by considering low dimensional cases and isometries alongside quasi-isometries.
First, we present some new results related to quasi-isometries between Heintze groups. Then we will see how these results together with the existing tools related to isometries can be applied to groups of dimension 4 and 5 in particular. Thus we take steps towards determining all the equivalence classes of groups up to isometry and quasi-isometry.
We completely solve the classification up to isometry for simply connected solvable groups in dimension 4, and for the subclass of groups of polynomial growth in dimension 5.
\end{abstract}

\maketitle
\tableofcontents

\addtocounter{section}{-1}

\section{Introduction}   		

This paper is a contribution to various metric classifications of Lie groups. 
The study of quasi-isometries between solvable groups is an active area of research 
\cite{pansu,Shalom,  Sauer2006,Cornulier:qihlc,MR2925383,MR3034290,MR2594617,MR3180486,MR3335254,avain:CPS,avain:carrasco-orlicz,avain:Pal20}. 
Distinguished examples of solvable groups are Heintze groups, i.e., those solvable simply connected Lie groups that admit left-invariant Riemannian structures with negative sectional curvatures \cite{avain:He74}.
Every Heintze group \( G \) is a semidirect product of $\mathbb{R}$ and a nilpotent graded Lie group \(N\). 
The parabolic visual boundary of \( G \) has a structure of homogeneous group.
Namely, the boundary may be identified with \(N\) equipped with a distance that has dilation properties. Moreover, a quasi-isometry between two Heintze groups induces a quasisymmetry between the associated nilpotent groups and vice versa \cite{paulin_bord_hyp, Bonk-Schramm,Cornulier:qihlc}. These quasisymmetries are, or induce, biLipschitz maps between the boundaries equipped with suitable homogeneous structures \cite{avain:LD-Xie_fibers, avain:carrasco-orlicz}.

The main aim of this article is twofold: First, we introduce quasi-isometry invariants that finally distinguish some low dimensional Heintze groups. Second, 
we study a finer metric classification.
We say that two Lie groups \( G\) and \( H\) \emph{can be made isometric} if there are left-invariant Riemannian metrics \( \rho_G \) and \( \rho_H \) so that \( (G,\rho_G) \) is isometric  to \( (H,\rho_H) \). This is an equivalence relation among simply connected solvable groups, and we find the equivalence classes in low dimension: 
we consider all simply connected solvable Lie groups in dimension 4 and those with polynomial growth in dimension \nolinebreak 5.
For each equivalence class, there is a Riemannian manifold for which each element of the class acts isometrically and simply transitively.
In our construction, such a Riemannian manifold is a Lie group, which we call the \laina{real-shadow}. 
In particular, we make a contribution to the conjecture that claims that every two Heintze groups are  either not quasi-isometric or 
they can be made isometric.

\subsection{Quasi-isometries of Heintze groups}

First we present our results related to distinguishing Heintze groups up to quasi-isometry equivalence. We work on the level of parabolic visual boundaries, thus our objects of interest are \emph{homogeneous groups}, by which we mean pairs \( (N,\alpha) \) where \( N \) is a simply connected nilpotent Lie group and \( \alpha \) is a derivation of \( N\), such that \( N \rtimes_\alpha \mathbb{R} \) defines a Heintze group. We may assume that \( N \rtimes_\alpha \mathbb{R} \) is purely real, i.e., that all the eigenvalues of \( \alpha \) are real numbers. 
For a homogeneous group \( (N,\alpha) \), we always consider the biLipschitz class of distances that are homogeneous under the one-parameter subgroups of automorphisms induced by the derivation \( \alpha \). This class may be empty in some cases, see Remark \ref{huom:existence_of_distance}.

Below we use the following notation: If \( (N,\alpha) \) is a homogeneous group and \( \bigoplus_{\lambda >0 } V^\alpha_\lambda \) is the decomposition of the Lie algebra of \( N \) by the generalised eigenspaces of the derivation \( \alpha \), then for every \( s > 0 \) we denote by \( \reach{N}{\alpha}{s} \) the subgroup of \( N\) with the Lie algebra \( \pp{LieSpan} (\bigoplus_{s \ge \lambda >0 } V^\alpha_\lambda) \).

\begin{lausea} \label{thm:computable-reachability-sets2}
	Let \( (N_1,\alpha) \) and \( (N_2,\beta) \) be purely real homogeneous groups that are biLipschitz equivalent via a map \( F \colon N_1 \to N_2 \). 
	\begin{lista}
		\item \label{thm-item-1} Then \( N_1 \) and \( N_2 \) are quasi-isometric as Riemannian Lie groups.
		\item \label{thm-item-2} For every \( p \in N_1 \) and every \( s \ge 1 \) we have \( F(p \reach{N_1}{\alpha}{s}) = F(p) \reach{N_2}{\beta}{s} \) and the same holds for all the iterated normalisers of the subgroups \( \reach{N_1}{\alpha}{s} \) and \( \reach{N_2}{\beta}{s} \), respectively.
	\end{lista}
\end{lausea}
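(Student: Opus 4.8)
The plan is to split the argument into two parts matching items (i) and (ii). For item (i), the point is that being biLipschitz equivalent via $F$ is a much stronger hypothesis than merely inducing a quasisymmetry; a biLipschitz map between the parabolic visual boundaries $(N_1,\alpha)$ and $(N_2,\beta)$ extends — by the standard hyperbolic-filling / boundary-extension correspondence cited in the introduction (Paulin, Bonk–Schramm, Cornulier) — to a quasi-isometry between the Heintze groups $N_1\rtimes_\alpha\mathbb{R}$ and $N_2\rtimes_\beta\mathbb{R}$. But a Heintze group $N\rtimes_\alpha\mathbb{R}$ is quasi-isometric to $N$ equipped with any left-invariant Riemannian metric: indeed, $N$ sits inside $N\rtimes_\alpha\mathbb{R}$ as a horosphere, and a net in a horosphere is a net in the whole Heintze group since the Heintze group has bounded "width" transverse to a horosphere only in one direction — more precisely one invokes the known fact (again via the visual boundary, or via the fact that the inclusion $N\hookrightarrow N\rtimes_\alpha\mathbb{R}$ followed by vertical projection is a quasi-isometry, a standard computation with the warped metric) that $N$ with a left-invariant Riemannian distance is quasi-isometric to the Heintze group it generates. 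Composing, $N_1$ is quasi-isometric to $N_2$. I would state these two reductions as the two steps and cite the literature already referenced in the introduction rather than reprove them.

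For item (ii), the strategy is to characterise the cosets $p\,\reach{N_1}{\alpha}{s}$ purely in biLipschitz-invariant terms, so that $F$ is forced to respect them. The natural candidate is a "reachability by horizontal-type curves" or "upper gradient / Hausdorff-dimension-of-slices" description: the subgroup $\reach{N}{\alpha}{s}$ with Lie algebra $\mathrm{LieSpan}(\bigoplus_{s\ge\lambda>0}V^\alpha_\lambda)$ should be exactly the set of points reachable from the identity along curves whose "metric derivative growth" under the dilations $\delta_t$ is controlled by the exponent $s$ — equivalently, $\reach{N}{\alpha}{s}$ is the maximal subset on which the restricted distance is biLipschitz to a homogeneous distance with dilation degrees $\le s$. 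Since $F$ is biLipschitz and the dilation structures on $N_1$ and $N_2$ are (by the extension argument, or by Le Donne–Xie / Carrasco Piaggio) compatible up to a scaling of the parameter — here the two groups are purely real so there is a canonical normalisation making the smallest eigenvalue equal, hence $s$ matches on both sides — the image $F(p\,\reach{N_1}{\alpha}{s})$ satisfies the same intrinsic characterisation in $N_2$, forcing it to be a coset of $\reach{N_2}{\beta}{s}$, and it contains $F(p)$, so it is $F(p)\reach{N_2}{\beta}{s}$. For the iterated normalisers: the normaliser of a subgroup is determined algebraically by the subgroup, and since $F$ maps the coset foliation by $\reach{N_1}{\alpha}{s}$ to that by $\reach{N_2}{\beta}{s}$, it maps the foliation by cosets of $N_{N_1}(\reach{N_1}{\alpha}{s})$ — which is recoverable from the first foliation as the set of cosets that are "uniformly close to" / "saturate" the smaller ones — to the corresponding foliation in $N_2$; one then iterates.

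The main obstacle I expect is isolating the right biLipschitz-invariant description of $\reach{N}{\alpha}{s}$ and proving that $F$ respects it: the boundary-extension results give an equivariance of $F$ with respect to the flows only up to the ambiguity of the "vertical" direction, and one must rule out that $F$ mixes eigenspaces of different degrees. Concretely, I would argue that for $v$ in $V^\alpha_\lambda$ the distance from $e$ to $\delta_t v$ scales like $t^{\lambda}$ (up to constants), so biLipschitz invariance of $d$ together with the equivariance $F\circ\delta_t^{(1)} = \delta_{ct}^{(2)}\circ F$ (for the matching normalisation $c=1$) forces the maximal-degree-$s$ part of the tangent to be preserved — this is where the pure-realness is used, since with complex eigenvalues the "degree" would only be the real part and the spiralling could a priori be reshuffled, but the reachability sets are defined via the real parts precisely so that this still goes through. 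The remaining verification that $F(p\,\reach{N_1}{\alpha}{s})$ is not just contained in but equal to a full coset uses that $F$ is a bijection together with the same argument applied to $F^{-1}$, and the normaliser step is then a purely combinatorial induction on the chain of subgroups, which I would phrase abstractly (a biLipschitz map respecting a foliation respects the foliation by "hulls" of its leaves) to avoid case analysis.
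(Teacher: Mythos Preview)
Your argument for part~(i) contains a genuine error. You claim that \(N\) with a left-invariant Riemannian metric is quasi-isometric to the Heintze group \(N\rtimes_\alpha\mathbb{R}\), but this is false in general: the Heintze group has exponential volume growth and is Gromov hyperbolic, while for instance \(N=\mathbb{R}^n\) has polynomial growth. The horosphere \(N\) sits inside \(N\rtimes_\alpha\mathbb{R}\) with exponential distortion, so the inclusion is not a quasi-isometric embedding, and a net in the horosphere is certainly not a net in the ambient group (the transverse \(\mathbb{R}\)-direction is unbounded). The paper's proof avoids the Heintze group entirely: it passes from the homogeneous distance \(d\) to the derived semi-intrinsic distance \(d_{[\ell]}\) (infimum of sums of \(d\)-distances along \(\ell\)-chains), observes that an \(L\)-biLipschitz map for \(d\) is automatically a quasi-isometry for \(d_{[\ell]}\), and then uses that \(d_{[\ell]}\) is a proper quasi-geodesic left-invariant distance, hence quasi-isometric to any Riemannian one.

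For part~(ii) you have the right core idea --- characterise \(\reach{N}{\alpha}{s}\) metrically as a reachability set --- and this is exactly what the paper does: it shows that \(\reach{N}{\alpha}{s}\) is precisely the set of endpoints of continuous curves from the identity whose image has Hausdorff dimension at most \(s\), a manifestly biLipschitz-invariant description. However, the mechanism you then propose to justify this, namely an equivariance \(F\circ\delta_t^{(1)}=\delta_{ct}^{(2)}\circ F\), is not available: a general biLipschitz map between homogeneous groups need not conjugate the dilation flows, and nothing in the cited references provides this. The paper instead proves the equality \(R(s)=\reach{N}{\alpha}{s}\) directly: one inclusion uses that exponential flow lines of eigenvectors have the expected Hausdorff dimension, and the other is an inductive argument through the chain of iterated normalisers, projecting to quotients and invoking the lower bound on Hausdorff dimension of curves from~\cite{avain:CPS}. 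Your treatment of the normaliser step (recovering the normaliser foliation from the original one via cosets at finite Hausdorff distance) is correct and matches the paper's Proposition~\ref{lemma:CPS-corollary-about-normalisers}.
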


The proof of this result, which is inspired by the results of \cite{avain:CPS}, is presented in Section \ref{sec:lemma_of_Seba}. We also present some examples to illustrate how this result helps to distinguish some particular pairs of low dimensional Heintze groups up to quasi-isometry. Notice that part (i) implies via \cite{pansu} that the Carnot groups associated to \( N_1\) and \( N_2\) as their asymptotic cones are isomorphic. In particular, the nilpotency steps of \( N_1\) and \( N_2\) agree.

\subsection{On the classification up to isometries}

To motivate and give some background, let us compare the state of the art of the classification up to isometry and quasi-isometry for two distinct subclasses of the class of solvable simply connected Lie groups: Heintze groups and solvable groups of polynomial growth (with nilpotent groups as main examples).
These subclasses have some similarities when it comes to isometries and quasi-isometries. In both cases every group has \laina{a representative with real roots} and those representatives are known to be distinguished by isometries, and are conjectured to be distinguished by quasi-isometries. More precisely, we have the following facts and folklore conjectures:

\begin{customprop}{H1}[Alekseevski\u\i  \,\cite{alekseevski}] \label{prop:alekseevski}
	Every Heintze group can be made isometric to a purely real Heintze group.
\end{customprop} 

\begin{customprop}{H2}[Gordon--Wilson \cite{avain:GW88}] \label{prop:Gordon-Wilson-Heintze}
	If two purely real Heintze groups can be made isometric, then they are isomorphic.
\end{customprop}

\begin{customotak}{H3} \label{otak:heintze-real}
		If two purely real Heintze groups are quasi-isometric then they are isomorphic.
\end{customotak}

\begin{customprop}{P1}[Breuillard \cite{avain:Bre14}]
	Every simply connected solvable Lie group of polynomial growth can be made isometric to a nilpotent group.
\end{customprop} 

\begin{customprop}{P2}[Wolf \cite{avain:Wol63}]
	If two simply connected nilpotent Lie groups can be made isometric, then they are isomorphic.
\end{customprop}

\begin{customotak}{P3} \label{otak:nilpotent}
	If two simply connected nilpotent Lie groups are quasi-isometric then they are isomorphic.
\end{customotak}

Recently, the articles \cite{avain:CKLNO} and \cite{avain:jablonski} clarified quite a bit this field, when it comes to isometries. Now we know that to every  simply connected solvable Lie group it is possible to canonically associate a completely solvable (a.k.a. split-solvable or real triangulable) Lie group, so called \laina{real-shadow} of the group, which is unique up to isomorphism. In particular, this construction satisfies the following theorem.

\begin{fakta}[Corollary 4.23 in \cite{avain:CKLNO}] \label{thm:graaf-CKLNO}
	Let \( G \) and \( H \) be simply connected solvable Lie groups.
	Then \( G \) can be made isometric to \( H \) if and only if the real-shadows of \(G\) and \( H\) are isomorphic.
\end{fakta}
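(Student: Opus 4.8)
The plan is to factor the relation of being made isometric --- which I abbreviate $\sim$ and which the paper notes is an equivalence relation --- through a canonical completely solvable model, and then to invoke a uniqueness (rigidity) principle. I would isolate two structural facts, each of which simultaneously generalises the Heintze case (H1, H2) and the polynomial-growth case (P1, P2) recalled above, and then combine them formally.

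\emph{Pillar (A), existence of the model:} every simply connected solvable Lie group $G$ can be made isometric to its real-shadow $G_{\mathbb{R}}$. I would fix a left-invariant metric $\rho$ on $G$ and analyse the connected isometry group $\mathrm{Isom}_0(G,\rho)$ through Gordon--Wilson's structure theory of Riemannian solvmanifolds. This group contains a maximal compact subgroup $K$ of \laina{rotations} fixing the identity, and $G$ is recovered from a completely solvable normal subgroup by the semidirect modification induced by $K$. Undoing this modification --- absorbing the imaginary part of each $\ad$-action into $K$ and keeping only the real part of the spectrum --- produces a completely solvable group that still acts simply transitively by isometries on the very same Riemannian manifold. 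It then remains to identify this group with $G_{\mathbb{R}}$, using that the real-shadow is the unique completely solvable Lie group carrying the prescribed real spectral data of $\ad$.

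\emph{Pillar (B), rigidity:} if two completely solvable simply connected Lie groups $A$ and $B$ can be made isometric, then they are isomorphic; equivalently, a Riemannian solvmanifold admits, up to isomorphism, a unique completely solvable group acting simply transitively by isometries. The point is that for completely solvable groups the compact modification of Pillar (A) is trivial, so an isometry carries no rotational ambiguity: it must respect the canonical nilshadow and splitting data and can be straightened into a Lie group isomorphism. This uniqueness is the common engine behind Wolf's P2 and Gordon--Wilson's H2, and I would prove it for the whole completely solvable class.

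Granting both pillars, both implications follow formally. For necessity, if $G \sim H$, then composing with $G \sim G_{\mathbb{R}}$ and $H \sim H_{\mathbb{R}}$ from Pillar (A) yields $G_{\mathbb{R}} \sim H_{\mathbb{R}}$; since these are completely solvable, Pillar (B) gives $G_{\mathbb{R}} \cong H_{\mathbb{R}}$. For sufficiency, an isomorphism $G_{\mathbb{R}} \cong H_{\mathbb{R}}$ makes them trivially isometric, and composing again with $G \sim G_{\mathbb{R}}$ and $H_{\mathbb{R}} \sim H$ gives $G \sim H$. I expect Pillar (A) to be the main obstacle: the delicate step is to verify that the Gordon--Wilson modification yields exactly the canonical real-shadow, rather than merely some completely solvable group sharing the same manifold, which forces one to pin down the uniqueness of the completely solvable model with prescribed real spectral data.
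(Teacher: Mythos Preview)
The paper does not prove this statement: it is stated as a \emph{Fact} and attributed to \cite{avain:CKLNO}, Corollary~4.23, and is used throughout as a black box. So there is no in-paper proof to compare against. Your two-pillar strategy is indeed the one carried out in \cite{avain:CKLNO}, and your sketch of Pillar~(A) via the Gordon--Wilson structure theory and of Pillar~(B) as a rigidity statement for completely solvable groups is on target.

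One genuine issue in your write-up: in the formal combination you invoke transitivity of $\sim$ (\laina{composing with $G\sim G_{\mathbb R}$ and $H\sim H_{\mathbb R}$ yields $G_{\mathbb R}\sim H_{\mathbb R}$}). But transitivity of \laina{can be made isometric} is \emph{not} known a~priori --- the paper stresses this explicitly (see the discussion around Proposition~\ref{prop:CKLNO-selvennys}) and only deduces it for simply connected solvable groups \emph{from} the very Fact you are proving. So as written your necessity direction is circular. The fix is already implicit in your own sketch of Pillar~(A): what you actually argue is the stronger statement that for \emph{every} left-invariant metric $\rho$ on $G$, the real-shadow $G_{\mathbb R}$ acts simply transitively by isometries on $(G,\rho)$. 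Use that directly: if $G\sim H$, fix a common Riemannian manifold $M$ on which both act; the strong Pillar~(A) applied to each places both $G_{\mathbb R}$ and $H_{\mathbb R}$ as simply transitive isometry groups of the \emph{same} $M$, whence $G_{\mathbb R}\sim H_{\mathbb R}$ with no appeal to transitivity, and Pillar~(B) finishes. State Pillar~(A) in this stronger form and the argument closes.
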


This result, besides containing the information of propositions H1-2 and P1-2 above, implies that \laina{can be made isometric} is an equivalence relation within the class of simply connected solvable Lie groups. Moreover, it implies that the isometric classification of such groups boils down to the algebraic problem of calculating their real-shadows. Remark that it is not known if Fact \ref{thm:graaf-CKLNO} holds when isometries are replaced by quasi-isometries. This is because the more general version, due to Y.\ Cornulier \cite[Conjecture 19.113]{Cornulier:qihlc}, of Conjecture \ref{otak:heintze-real} and Conjecture \ref{otak:nilpotent} is also open: whether two quasi-isometric completely solvable simply connected Lie groups are necessarily isomorphic or not.

Since Lie groups that can be made isometric are necessarily quasi-isometric, we are led to study the following problem: Which pairs of groups in the same quasi-isometry class can be made isometric? 
This problem is completely solved for groups of dimension 3 and it is surveyed in \cite{avain:fasslerledonne}.
One of the main contributions of the present article is to push towards a solution for simply connected groups of dimension 4. While we are not able to completely solve the quasi-isometry relations of 4-dimensional groups, we can solve the isometry relations: it is clear that Fact \ref{thm:graaf-CKLNO} is enough for that. However, we will also prove Theorem \ref{thm:alkup_yleinen} below, which is a more explicit result and can be proved with elementary methods. In its statement, we denote by \( \alpha_0 =\ARe +  \Anil  \) the \emph{real part} of the derivation \( \alpha \): we shall recall the relevant decomposition more precisely in Proposition \ref{prop:decomposition_of_a_derivation}.

\begin{lausea} \label{thm:alkup_yleinen}
	Let \( H \) be a simply connected Lie group and \( \alpha \) a derivation of \( H\). 
	Then the Lie group \(  H \rtimes_{\alpha} \mathbb{R} \) can be made isometric to the Lie group \(  H \rtimes_{\alpha_0} \mathbb{R} \), where \( \alpha_0 \) is the real part of \( \alpha \).
\end{lausea}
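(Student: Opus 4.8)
The plan is to exhibit explicit left-invariant Riemannian metrics on $H\rtimes_\alpha\mathbb{R}$ and on $H\rtimes_{\alpha_0}\mathbb{R}$ together with an explicit isometry between them. The intuition is that the semisimple part of $\alpha$ decomposes further as $\alpha_{\pp{sr}}+\alpha_{\pp{si}}$, a commuting sum of a diagonalisable-over-$\mathbb{R}$ piece and a ``skew'' piece whose one-parameter group $t\mapsto e^{t\alpha_{\pp{si}}}$ consists of orthogonal transformations with respect to a suitable inner product on the Lie algebra $\mathfrak{h}$ of $H$; replacing $\alpha$ by $\alpha_0=\alpha-\alpha_{\pp{si}}$ only removes this orthogonal rotation, which should be invisible to the Riemannian metric if the metric is chosen to be invariant under $e^{t\alpha_{\pp{si}}}$.

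\medskip

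\noindent\textbf{Step 1: the decomposition.} First I would invoke Proposition \ref{prop:decomposition_of_a_derivation} to write $\alpha=\alpha_0+\alpha_{\pp{si}}$ where $\alpha_0=\ARe+\Anil$ is the real part, $\alpha_{\pp{si}}$ is the ``semisimple imaginary'' part, and crucially $[\alpha_0,\alpha_{\pp{si}}]=0$, $[\ARe,\Anil]=0$, etc.\ (all the pieces of the refined Jordan decomposition commute). Then I would fix an inner product $\langle\cdot,\cdot\rangle$ on $\mathfrak h$ adapted to $\alpha_{\pp{si}}$: since $\alpha_{\pp{si}}$ is semisimple with purely imaginary spectrum, there is an inner product making it skew-symmetric, so that $R_t:=e^{t\alpha_{\pp{si}}}\in O(\mathfrak h)$ for all $t\in\mathbb R$. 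Because each $R_t$ is a Lie algebra automorphism of $\mathfrak h$ (it is $e^{t\cdot(\text{derivation})}$), it integrates to a Lie group automorphism $\Phi_t$ of $H$.

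\medskip

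\noindent\textbf{Step 2: the map.} Define $\Psi\colon H\rtimes_{\alpha_0}\mathbb{R}\to H\rtimes_\alpha\mathbb{R}$ by $\Psi(h,t)=(\Phi_t(h),t)$, i.e.\ ``twist the $H$-coordinate by the rotation $\Phi_t$''. One checks this is a diffeomorphism (inverse $(h,t)\mapsto(\Phi_{-t}(h),t)$), and — this is the key algebraic computation — that it is a group isomorphism from $H\rtimes_{\alpha_0}\mathbb{R}$ onto $H\rtimes_\alpha\mathbb{R}$: the point is precisely that conjugating the $\alpha_0$-action by the family $\Phi_t$ (and using that $e^{t\alpha_0}$ commutes with $e^{s\alpha_{\pp{si}}}$) reproduces the $\alpha$-action $e^{t\alpha}=e^{t\alpha_0}e^{t\alpha_{\pp{si}}}$. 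Alternatively, and perhaps more cleanly, one avoids claiming $\Psi$ is a homomorphism and instead transports a chosen left-invariant metric from one side to the other, checking directly that the push-forward is again left-invariant; I would present whichever version is shortest.

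\medskip

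\noindent\textbf{Step 3: the metric and conclusion.} On $\mathfrak h\oplus\mathbb R = \mathrm{Lie}(H\rtimes_{\alpha_0}\mathbb R)$ put the inner product $\langle\cdot,\cdot\rangle\oplus(\text{standard on }\mathbb R)$ and extend to a left-invariant metric $\rho_0$ on $H\rtimes_{\alpha_0}\mathbb R$; do the same on $H\rtimes_\alpha\mathbb R$ to get $\rho$. Then I would verify that $\Psi$ is an isometry from $(H\rtimes_{\alpha_0}\mathbb R,\rho_0)$ to $(H\rtimes_\alpha\mathbb R,\rho)$: by left-invariance it suffices to compute the differential of $\Psi$ at the identity along the one-parameter subgroups, where the $\mathbb R$-direction is sent to the $\mathbb R$-direction and the $\mathfrak h$-direction is acted on by $R_t\in O(\mathfrak h)$, which preserves $\langle\cdot,\cdot\rangle$ by the choice in Step 1; the mixed terms vanish because the splitting $\mathfrak h\oplus\mathbb R$ is orthogonal and is preserved. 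Hence $H\rtimes_\alpha\mathbb R$ and $H\rtimes_{\alpha_0}\mathbb R$ can be made isometric.

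\medskip

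\noindent\textbf{Main obstacle.} The routine-but-delicate point is Step 2: carefully checking that the ``untwisting by $\Phi_t$'' really carries the $\alpha_0$-semidirect structure to the $\alpha$-one — equivalently, that $\Psi_*$ sends left-invariant vector fields to left-invariant vector fields — which rests entirely on the commutation $[\alpha_0,\alpha_{\pp{si}}]=0$ from Proposition \ref{prop:decomposition_of_a_derivation}. Everything else (existence of the $\alpha_{\pp{si}}$-invariant inner product, integrating $R_t$ to $\Phi_t$ on the simply connected $H$, left-invariant extension of metrics) is standard. A minor subtlety worth a sentence: one should make sure $H$ being merely ``simply connected'' (not necessarily nilpotent) causes no trouble — it does not, since $R_t$ is an automorphism of $\mathfrak h$ and $H$ is simply connected, so $R_t$ integrates uniquely to an automorphism $\Phi_t$ of $H$, and $t\mapsto\Phi_t$ is smooth.
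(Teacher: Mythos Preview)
Your Step~2 claim that $\Psi(h,t)=(\Phi_t(h),t)$ is a group isomorphism is false. Computing both sides of the homomorphism identity in the $H$-coordinate gives
\[
\Phi_{t_1+t_2}(h_1)\cdot \Phi_{t_1+t_2}\bigl(e^{t_1\alpha_0}(h_2)\bigr)
\quad\text{versus}\quad
\Phi_{t_1}(h_1)\cdot e^{t_1\alpha}\bigl(\Phi_{t_2}(h_2)\bigr),
\]
and after using $e^{t_1\alpha}=e^{t_1\alpha_0}\Phi_{t_1}$ and the commutation, the right side becomes $\Phi_{t_1}(h_1)\cdot \Phi_{t_1+t_2}(e^{t_1\alpha_0}(h_2))$. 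The first factors $\Phi_{t_1+t_2}(h_1)$ and $\Phi_{t_1}(h_1)$ disagree whenever $\alpha_{\pp{si}}\neq 0$. (This is expected: if the two semidirect products were isomorphic, the theorem would be trivial.) Consequently the reduction in Step~3, ``by left-invariance it suffices to compute $d\Psi$ at the identity'', is illegitimate: that reduction requires $\Psi$ to intertwine the two left-translation actions, which it does not. In fact $\Psi$ is \emph{not} an isometry for the two left-invariant metrics you describe: a direct computation of $d\bigl(L^{\alpha}_{\Psi(m,s)^{-1}}\circ\Psi\circ L^{\alpha_0}_{(m,s)}\bigr)\big|_{1}$ gives a block-upper-triangular map $\bigl(\begin{smallmatrix}\mathrm{Id}&v\\0&1\end{smallmatrix}\bigr)$ on $\mathfrak{h}\oplus\mathbb{R}$ with $v=e^{-s\alpha_0}\bigl(dL_{m^{-1}}\tfrac{d}{dr}\big|_{r=0}\Phi_r(m)\bigr)$, which is nonzero for generic $m$ and hence not orthogonal. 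Your ``alternative'' of pushing $\rho_0$ forward by $\Psi$ fails for the same reason: the resulting tensor depends on the $H$-coordinate and is not left-$G$-invariant.

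The paper's argument keeps all your ingredients (the decomposition, the inner product for which $e^{t\alpha_{\pp{si}}}\in O(\mathfrak h)$) but replaces $\Psi$ by the \emph{identity map} on the underlying manifold $H\times\mathbb{R}$. The point is that both group structures act on this one manifold by left translations, and the defect between the two actions at a given parameter $(h,t)$ is exactly
\[
\bigl(L^{\alpha}_{(h,t)}\bigr)^{-1}\circ L^{\alpha_0}_{(h,t)}\;=\;\Phi_{-t}\times\mathrm{Id},
\]
an element of the precompact group $W=\{\Phi_s\times\mathrm{Id}\}$. Hence any $W$-invariant inner product at the identity, extended left-invariantly with respect to \emph{either} semidirect product, yields one and the same Riemannian metric on $H\times\mathbb{R}$, left-invariant for both. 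That single metric witnesses that the two groups can be made isometric.
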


In the category of solvable groups, the above result is a special case of Fact \ref{thm:graaf-CKLNO}, but it may also provide information about isometry questions of non-solvable semidirect products. Notice that there is no assumptions on the eigenvalues of \( \alpha \).

Theorem \ref{thm:alkup_yleinen} has practical value also within the family of solvable Lie groups:
In Section \ref{sec:4D} we find all the pairs of Lie groups that can be made isometric within the family of 4-dimensional simply connected solvable Lie groups.
In Section \ref{sec:5D} we do the same within the family of 5-dimensional simply connected solvable Lie groups of polynomial growth.
The method is described as follows. Since the algebraic classification of Lie groups is known within these families, we first indicate all the completely solvable ones: these are the groups that are isomorphic to their real-shadows. Then for each solvable group  \( G\) that is not completely solvable, we find a completely solvable group to which it is isometric by finding a suitable decomposition of \( G\) as a semi-direct product \( H \rtimes_\alpha \mathbb{R} \) where \( H \) is completely solvable. This happens to be always possible within the families we investigate. Now we know from Theorem \ref{thm:alkup_yleinen} that such a group \( G \) can be made isometric to the completely solvable group \( H \rtimes_{\alpha_0} \mathbb{R} \), while Fact \ref{thm:graaf-CKLNO} then guarantees that \( H \rtimes_{\alpha_0} \mathbb{R} \) is the real-shadow of \( G \), and any other solvable group \( G' \) that can be made isometric to \( G \) must also have \( H \rtimes_{\alpha_0} \mathbb{R} \) as the real-shadow.

The result we get in dimension 4 is summarised in the theorem below. 

\begin{lausea} \label{thm:4D-classification_intro}
	Let \( G \) and \( H \) be simply connected solvable Lie groups of dimension 4. 
	If \( G \) and \( H \) are both completely solvable, then they can be made isometric if and only if they are isomorphic. Instead, if at least one of them is not completely solvable, then they can be made isometric if and only if they belong to the same set of groups in the following list (the notation is w.r.t.\ the classification given by \cite{avain:Patera}):
	\begin{lista}
		\item[\( (\pp{I}) \)] \( \{  \mathbb{R}^4,\: \mathbb{R} \times A_{3,6}  \} \),
		\item[\( (\pp{II}) \)] \( \{  \mathbb{R} \times A_{3,1},\: A_{4,10} \} \),
		\item[\( (\pp{III}_{\lambda}) \)] \( \{ A_{4,5}^{\lambda,\lambda} \} \cup \{ A_{4,6}^{a,b} : \lambda = \mathrm{sign}(ab) \min(|b/a|,|a/b|) \} \),
		\item[\( (\pp{IV}) \)]  \( \{ A_{4,9}^1\} \cup \{A_{4,11}^{a} : a \in \vali{0,\infty}\, \}\),
		\item[\( (\pp{V}) \)] \( \{\mathbb{R} \times A_{3,3},\: A_{4,12} \} \cup \{ \mathbb{R} \times A_{3,7}^a  : a \in \vali{0,\infty}\, \}\),
		\item[\( (\pp{VI}) \)] \(  \{ \mathbb{R}^2 \times  A_2\} \cup \{ A_{4,6}^{a,0} : a \in \mathbb{R} \} \)
	\end{lista}
Here \( (\pp{III}_{\lambda}) \) stands for distinct sets depending on parameter \( \lambda \in \mathbb{R} \pois \{0\} \). Hence the above list contains 5 sets (2 finite and 3 infinite) and one family of sets depending on a parameter.
\end{lausea}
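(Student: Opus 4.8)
The first assertion is immediate from the general theory: a completely solvable simply connected Lie group is isomorphic to its own real-shadow, so by Fact~\ref{thm:graaf-CKLNO} two completely solvable simply connected Lie groups can be made isometric if and only if they are isomorphic. In particular no two such groups merge into a larger equivalence class, and any completely solvable $G$ not appearing among the distinguished representatives in the list forms a singleton class.

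For the second assertion the plan is to compute, for each $4$-dimensional simply connected solvable Lie group $G$ that is \emph{not} completely solvable, the isomorphism type of its real-shadow, and then to read off the equivalence classes from Fact~\ref{thm:graaf-CKLNO}. First I would go through the algebraic classification of \cite{avain:Patera} and isolate the Lie algebras that fail to be completely solvable, i.e.\ those for which the relevant $\ad$-operator has a non-real eigenvalue: in that notation these are $\mathbb{R}\times A_{3,6}$, the family $\mathbb{R}\times A_{3,7}^a$, the family $A_{4,6}^{a,b}$, $A_{4,10}$, the family $A_{4,11}^a$, and $A_{4,12}$. Each of these can be written as a semidirect product $H\rtimes_\alpha\mathbb{R}$ with $H$ completely solvable — concretely $H$ may be taken to be $\mathbb{R}^3$ (for $A_{4,6}^{a,b}$), $\mathbb{R}\times\mathbb{R}^2$ (for $\mathbb{R}\times A_{3,6}$, $\mathbb{R}\times A_{3,7}^a$, $A_{4,12}$), or the Heisenberg algebra $A_{3,1}$ (for $A_{4,10}$, $A_{4,11}^a$) — where $\alpha$ is a derivation with at least one pair of complex conjugate non-real eigenvalues. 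The point to check here is merely that such a codimension-one completely solvable ideal $H$ exists; for the listed $4$-dimensional algebras this is a short direct verification.

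Next, for such a $G=H\rtimes_\alpha\mathbb{R}$ I would compute the real part $\alpha_0$ of $\alpha$ via the decomposition of Proposition~\ref{prop:decomposition_of_a_derivation}: on a $2$-dimensional real block of $\alpha$ with eigenvalues $c\pm\I d$ the derivation $\alpha_0$ acts as the scalar $c$, so $\alpha_0$ is obtained from $\alpha$ by erasing the purely imaginary (rotational) part while keeping the nilpotent part. Theorem~\ref{thm:alkup_yleinen} then gives that $G$ can be made isometric to $H\rtimes_{\alpha_0}\mathbb{R}$, which is completely solvable; hence by Fact~\ref{thm:graaf-CKLNO} the group $H\rtimes_{\alpha_0}\mathbb{R}$ is the real-shadow of $G$, and is simultaneously the real-shadow of every simply connected solvable group that can be made isometric to $G$. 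It then remains to recognise $H\rtimes_{\alpha_0}\mathbb{R}$ back inside the list of \cite{avain:Patera}, using that rescaling $\alpha_0$ by a nonzero constant and permuting eigenspaces does not change the isomorphism type of $H\rtimes_{\alpha_0}\mathbb{R}$. This produces exactly the six sets (respectively the one-parameter family of sets) in the statement. For instance: $A_{3,6}$ has $\ad$-operator with eigenvalues $\pm\I$, so its real part is $0$ and its real-shadow is $\mathbb{R}^3$, whence the real-shadow of $\mathbb{R}\times A_{3,6}$ is $\mathbb{R}^4$, giving $(\pp{I})$; the rotational derivation defining $A_{4,10}$ also has vanishing real part while the Heisenberg relation survives, so its real-shadow is $\mathbb{R}\times A_{3,1}$, giving $(\pp{II})$; the real-shadow of $A_{4,6}^{a,b}$ is $\mathbb{R}^3\rtimes_{\diag(a,b,b)}\mathbb{R}$, which after rescaling and reordering is $A_{4,5}^{\lambda,\lambda}$ with $\lambda=\mathrm{sign}(ab)\min(|b/a|,|a/b|)$ when $b\ne 0$ — giving $(\pp{III}_\lambda)$ — and is $\mathbb{R}^2\times A_2$ when $b=0$ — giving $(\pp{VI})$; the real part of the derivation defining $A_{4,11}^a$ has eigenvalues $2a,a,a$, so its real-shadow is a rescaling of $A_{4,9}^1$, giving $(\pp{IV})$; and the real-shadows of $A_{4,12}$ and of each $\mathbb{R}\times A_{3,7}^a$ are both $\mathbb{R}\times A_{3,3}$, giving $(\pp{V})$.

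Finally, to conclude that the listed sets are \emph{exactly} the equivalence classes (and not merely contained in them) I would invoke Fact~\ref{thm:graaf-CKLNO} once more together with the known isomorphism classification of $4$-dimensional solvable Lie algebras: the completely solvable groups arising as the real-shadows above — $\mathbb{R}^4$, $\mathbb{R}\times A_{3,1}$, $A_{4,5}^{\lambda,\lambda}$, $A_{4,9}^1$, $\mathbb{R}\times A_{3,3}$ and $\mathbb{R}^2\times A_2$ — are pairwise non-isomorphic, and the $A_{4,5}^{\lambda,\lambda}$ are mutually non-isomorphic for distinct $\lambda\in\mathbb{R}\pois\{0\}$, so the sets do not merge; while, as noted at the start, any completely solvable group not among these representatives has real-shadow equal to itself and therefore lies in none of the sets. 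The main obstacle — and the place where genuine care is needed — is the middle part: exhibiting each non-completely-solvable $G$ concretely as $H\rtimes_\alpha\mathbb{R}$ with $H$ completely solvable (choosing the right codimension-one ideal), computing $\alpha_0$, and then matching the rescaled and reordered real-shadow with the normal forms of \cite{avain:Patera}, in particular tracking the parameter identifications within the families $A_{4,5}^{\cdot,\cdot}$, $A_{4,6}^{\cdot,\cdot}$, $A_{4,11}^{\cdot}$ and $A_{3,7}^{\cdot}$ so that the normalisation of the parameter $\lambda$ in $(\pp{III}_\lambda)$ comes out correctly.
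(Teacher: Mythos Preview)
Your proposal is correct and follows essentially the same route as the paper: the paper likewise reduces the problem via Fact~\ref{thm:graaf-CKLNO} to computing real-shadows, observes that every non-completely-solvable $4$-dimensional solvable algebra admits a decomposition $H\rtimes_\alpha\mathbb{R}$ with $H$ completely solvable, applies Theorem~\ref{thm:alkup_yleinen} (packaged as Corollary~\ref{prop:R-shadow-and-real-part-reduction}) to identify the real-shadow as $H\rtimes_{\alpha_0}\mathbb{R}$, and then matches the result against the Patera--Mubarakzjanov list. Your sketch of the individual computations and the closing argument that the listed real-shadows are pairwise non-isomorphic are exactly what the paper records in its Tables~\ref{4D-luokittelu-real} and~\ref{4D-luokittelu-complex}.
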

In Section \ref{sec:5D} we find 
 similar classification for simply connected solvable groups of polynomial growth in dimension 5. Table~\ref{table:5d-poly-growth} in Section \ref{sec:5D} summarises the results within this family. 


\subsection*{Acknowledgements}
We thank Pierre Pansu and Matias Carrasco Piaggio for constructive comments. 
In particular, Pansu helped us to strengthen an early version of
 Theorem~\ref{thm:computable-reachability-sets2}.(i).

\section{Preliminaries}

\label{sec:preliminaries}

\subsection{Homogeneous groups}
\label{sec:homog_groups}

We shall approach the quasi-isometric classification problems in Heintze groups by studying the biLipschitz maps on their boundary, and we now define precisely the terminology of this setting.

In this paper, we will always use the convention that if \( N,H,G, \ldots \) are Lie groups, then the fraktur letters \( \mathfrak{n}, \mathfrak{h}, \mathfrak{g}, \ldots \) denote their Lie algebras, and vice versa.



\begin{maar} \label{maar:homogeneous_group}
	A pair \( (N,\alpha) \) is a \emph{homogeneous group} if \( N \) is a simply connected nilpotent Lie group and \( \alpha \) is a derivation of \( N \) so that for each eigenvalue \( \lambda \)  of \( \alpha \) it holds \( \pp{Re}(\lambda) > 0 \).
	Further, we say that a homogeneous group \( (N,\alpha) \) is
	\begin{itemize}
		\item  \emph{purely real}, if the eigenvalues of \( \alpha \) are real numbers,
		\item \emph{of Carnot type} if it is purely real, if \( \alpha \) is diagonalisable over \( \mathbb{R} \), and if the eigenspace corresponding to the smallest of the eigenvalues Lie-generates \( \mathfrak{n} \).
	\end{itemize}
Two homogeneous groups \( (N_1,\alpha) \) and \( (N_2,\beta) \) are \emph{isomorphic (as homogeneous groups)} if there is an isomorphism of Lie groups \( F \colon N_1 \to N_2 \) so that \( \beta \circ F_* = F_* \circ \alpha \), where \( F_* \) is the Lie algebra isomorphism induced by \(F\).
\end{maar}

The data defining homogeneous groups exactly coincide with the data defining Heintze groups. The terms \emph{purely real Heintze group} and \emph{Heintze group of Carnot type} appear in the literature and correspond to the terms above, see for example \cite{avain:CPS} and \cite{Cornulier:qihlc}. 

For purposes of classifications up to isometry or quasi-isometry, only the purely real homogeneous groups play a role due to the result of \cite{alekseevski} presented here as Proposition \ref{prop:alekseevski} in the introduction. Hence we will always assume that the derivation has real eigenvalues even in the cases when it would be not strictly necessary.

Next we discuss homogeneous distances on homogeneous groups.

\begin{maar}
	Let \( (N,\alpha) \) be a homogeneous group.
	A distance function \( \rho \) on the set \(N\) is said to be \emph{homogeneous (for \( (N,\alpha) \))}, if \( \rho \) is left-invariant, induces the manifold topology of \( N \), and 
	for all \( \lambda > 0 \) we have
	\( \rho(\delta_\lambda x, \delta_\lambda y) = \lambda \rho(x,y) \) for all \( x,y \in N \), where \( \delta_\lambda \) is the automorphism of \( N \) with the differential \( (\delta_\lambda)_* = \e^{\log(\lambda) \alpha}  \). 
	The triple \( (N,\alpha,\rho) \) is called \emph{a homogeneous metric group} if the distance function \( \rho \) is homogeneous for \( (N,\alpha) \).
\end{maar}

\begin{huom} \label{huom:existence_of_distance}
	In \cite[Theorem B]{seba-enrico-dilations} it is characterised when a purely real homogeneous group \( (N,\alpha) \) admits a distance \( \rho \) making it a homogeneous metric group: denoting by \( \nu \) the smallest eigenvalue of $\alpha$, a distance exists if and only if \( \nu \ge 1 \) and the restriction of \( \alpha \) to its generalised eigenspace of eigenvalue 1 is diagonalisable over \( \mathbb{R} \). Consequently, if \( (N,\alpha) \) is a homogeneous group, then for every \( \lambda > 1/\nu \), the homogeneous group \( (N,\lambda \alpha) \) admits a distance \( \rho \) making it a homogeneous metric group, and this may or may not be true for \( \lambda =1/\nu \).
\end{huom}

\begin{huom}\label{2017-09-12_kysymys}
	Given a homogeneous group \( (N,\alpha) \), all the distance functions that are homogeneous for \( (N,\alpha) \) are biLipschitz equivalent via the identity map.
	More generally, it is straightforward to prove the following statement.
	Let \( \rho \) and \( \rho' \) be two 
	distances metrising the same topological space \( M \).
	Suppose there is a transitive group of homeomorphisms acting by isometries for both of the distances.
	Suppose there is \( o \in M \) and
	a bijection \( \delta \colon M \to M \), fixing the point \(o\), and \( \lambda \in \vali{0,1} \) with 
	\begin{equation*}  
	\rho(\delta(x),\delta(y)) = \lambda \rho(x,y) 
	\qquad \text{and} \qquad
	\rho'(\delta(x),\delta(y)) = \lambda \rho'(x,y) \pilkku \qquad \forall x,y \in M
	\piste
	\end{equation*}
	Then \( \rho \) and \( \rho' \) are biLipschitz equivalent via the identity map of \( M\).
\end{huom}

	Due to Remark \ref{2017-09-12_kysymys}, when considering biLipschitz maps between two homogeneous groups, it is not necessary to specify the homogeneous distance functions, provided they exist, for which we refer to Remark \ref{huom:existence_of_distance}. Whenever we assume that two homogeneous groups are biLipschitz equivalent
	we mean that on both of them some homogeneous distances exist for which the metric spaces are biLipschitz equivalent.

The following result summarises the known correspondence between the quasi-iso\-met\-ries of Heintze groups and the biLipschitz maps on their boundaries. For a good exposition and list of references, see \cite[p.6]{avain:CPS}.

\begin{prop} \label{go-to-the-boundary}
	Let \( (N_1, \alpha) \) and \( (N_2, \beta) \) be homogeneous groups. Then the Heintze groups \( N_1 \rtimes_\alpha \mathbb{R} \) and \( N_2 \rtimes_\beta \mathbb{R} \) are quasi-isometric if and only if there exists \( \lambda_1,\lambda_2 > 0 \) so that \( (N_1, \lambda_1 \alpha) \) and \( (N_2, \lambda_2 \beta) \) are biLipschitz equivalent.
\end{prop}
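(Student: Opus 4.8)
The plan is to obtain the statement by assembling the well-known dictionary relating a Heintze group, its Gromov boundary, and the self-similar nilpotent group that sits on that boundary; the only genuinely new work is the bookkeeping of the scaling parameters. First I would recall that, by Heintze's construction \cite{avain:He74}, a homogeneous group $(N,\alpha)$ gives rise to a Heintze group $N\rtimes_\alpha\mathbb{R}$ admitting a left-invariant Riemannian metric of negative sectional curvature; being homogeneous, such a metric has curvature bounded above by a negative constant, so $N\rtimes_\alpha\mathbb{R}$ is a proper, geodesic, visual Gromov hyperbolic space, and since any two left-invariant Riemannian metrics on a Lie group are quasi-isometric, its quasi-isometry class is well defined and may be computed with this metric. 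Its boundary at infinity is a topological sphere on which the flow lines of the $\mathbb{R}$-factor all converge to one point $\xi_\alpha$, and the complement $\partial_\infty(N\rtimes_\alpha\mathbb{R})\setminus\{\xi_\alpha\}$ is canonically identified with $N$. For a suitable base point and visual parameter, this identification is biLipschitz once $N$ is endowed with a homogeneous distance for $(N,\lambda\alpha)$, where $\lambda>0$ depends on the normalization of the metric and on the visual parameter and may be prescribed to be any value for which such a distance exists (cf.\ Remark~\ref{huom:existence_of_distance}).

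I would then chain two equivalences. By the boundary-extension theory for visual hyperbolic spaces \cite{paulin_bord_hyp,Bonk-Schramm,Cornulier:qihlc}, the Heintze groups $N_1\rtimes_\alpha\mathbb{R}$ and $N_2\rtimes_\beta\mathbb{R}$ are quasi-isometric if and only if their boundary spheres, equipped with visual metrics, are quasisymmetric. The Heintze-specific input \cite{avain:carrasco-orlicz,avain:LD-Xie_fibers} then yields that such a quasisymmetry of the boundaries is equivalent to the existence of a biLipschitz homeomorphism $N_1\to N_2$ once each $N_i$ carries a homogeneous distance for a suitable rescaling $(N_i,\lambda_i\alpha_i)$: here one uses that the parabolic points can be matched and that a quasisymmetry between homogeneous metric groups becomes, after rescaling the derivations, an honest biLipschitz equivalence. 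Concatenating these, $N_1\rtimes_\alpha\mathbb{R}$ is quasi-isometric to $N_2\rtimes_\beta\mathbb{R}$ if and only if the boundary spheres are quasisymmetric, if and only if $(N_1,\lambda_1\alpha)$ is biLipschitz equivalent to $(N_2,\lambda_2\beta)$ for some $\lambda_1,\lambda_2>0$, which is exactly the assertion. (By Pansu's theorem \cite{pansu} and part~(i) of Theorem~\ref{thm:computable-reachability-sets2}, such an equivalence is in fact quite constrained, though that is not needed here.)

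The step I expect to demand the most care, and the real obstacle to a self-contained argument, is this second equivalence. Two things must be controlled. One has to trade the compact boundary sphere for the non-compact group $N_i$, which requires knowing that the parabolic point $\xi_\alpha$ is canonical and exploiting the dilation structure of the boundary — precisely the rigidity provided by \cite{avain:carrasco-orlicz,avain:LD-Xie_fibers} and the discussion of focal hyperbolic groups in \cite{Cornulier:qihlc}. And one has to keep track of all the scaling parameters that appear — the visual parameters chosen on the two sides, together with the rescaling furnished by the quasisymmetry-to-biLipschitz upgrade — and check that they collapse into a single pair $\lambda_1,\lambda_2$; this is routine but is the origin of the two free parameters in the statement. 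The reverse implications, producing a quasi-isometry of the Heintze groups out of a biLipschitz boundary map (which extends to a quasi-Möbius, hence quasisymmetric, map of the spheres), are comparatively soft and again use only the visual character of the spaces.
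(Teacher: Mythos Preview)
Your outline is essentially the paper's own argument: reduce to the quasisymmetry of the parabolic boundaries via \cite{paulin_bord_hyp,Bonk-Schramm,Cornulier:qihlc}, then upgrade quasisymmetry to biLipschitz using \cite{avain:carrasco-orlicz,avain:LD-Xie_fibers}. The one place where the paper is more explicit than your sketch is the upgrade step: it splits into the Carnot and non-Carnot cases, since in the Carnot case a quasisymmetry need not itself be biLipschitz (think of power maps on $\mathbb{R}$) and one instead invokes Pansu \cite{pansu} to get an isomorphism of homogeneous groups, whereas in the non-Carnot case the given quasisymmetry is already biLipschitz---your phrase ``the quasisymmetry \ldots\ becomes an honest biLipschitz equivalence'' elides this dichotomy.
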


\begin{proof}
	The two Heintze groups \( N_1 \rtimes_\alpha \mathbb{R} \) and \( N_2 \rtimes_\beta \mathbb{R} \) are quasi-isometric if and only if there are \( \lambda_1,\lambda_2 > 0 \) so that \( (N_1, \lambda_1 \alpha) \) and \( (N_2, \lambda_2 \beta) \) are quasisymmetric \cite{paulin_bord_hyp, Bonk-Schramm,Cornulier:qihlc}. 
	The constants are needed to ensure the existence of homogeneous distances, rather than quasidistances, see Remark \ref{huom:existence_of_distance}.

	If
	\( (N_1, \lambda_1 \alpha) \) and \( (N_2, \lambda_2 \beta) \) are biLipschitz equivalent, then they are quasisymmetric.
	Vice versa, suppose that 
	\( (N_1, \lambda_1 \alpha) \) and \( (N_2, \lambda_2 \beta) \) are quasisymmetric. 
	Without changing their biLipschitz class, we may assume that \( \alpha \) and \( \beta \) have only real eigen\-values, see \cite[Theorem C]{seba-enrico-dilations}.
	Up to changing the constants, 
	we may assume that the smallest of the eigenvalues of \( \lambda_1 \alpha \) and \( \lambda_2 \beta \) agree.
	
	If \( (N_1,\lambda_1 \alpha) \) is of Carnot type, then 
	by \cite{pansu, avain:carrasco-orlicz,avain:LD-Xie_fibers}
	we have that \( (N_1,\lambda_1 \alpha) \) and \( (N_2,\lambda_2 \beta) \) are isomorphic as homogeneous groups and thus biLipschitz equivalent.
	If \( (N_1,\lambda_1 \alpha) \) is not of Carnot type, then the quasisymmetry from \( (N_1,\lambda_1 \alpha) \)  to \( (N_2,\lambda_2 \beta) \) is a biLipschitz map,
	 by \cite{avain:carrasco-orlicz,avain:LD-Xie_fibers}.
\end{proof}

Next, we translate to our language Lemma 5.1 of \cite{avain:CPS}. 

\begin{prop}[\cite{avain:CPS}]
	\label{prop:CPS-H-dim-lemma}
	Let \( (N,\alpha,\rho) \) be a purely real homogeneous metric group, and let \( \lambda_1 \) be the smallest of the eigenvalues of \( \alpha \).
	Then the Hausdorff dimension of any non-constant curve on \( N \) is at least 
	\( \lambda_1 \), and
	the curve \( t \mapsto \exp(t X) \) has Hausdorff dimension \( \lambda_1 \)  if \( X \) is an eigenvector of \( \alpha \) with eigenvalue \( \lambda_1 \).
\end{prop}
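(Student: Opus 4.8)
The plan is to transfer the Hausdorff-dimension statements from a fixed left-invariant Riemannian distance $d_R$ on $N$ to the homogeneous distance $\rho$ by establishing a two-sided Hölder comparison between $d_R$ and $\rho$ dictated by the dilations $\delta_\mu$, and then to invoke the standard transformation rule for Hausdorff dimension under locally Hölder maps. Recall that $\rho$ is proper, i.e.\ closed $\rho$-balls are compact (see e.g.\ \cite{seba-enrico-dilations}). Write $V_\lambda\subseteq\mathfrak n$ for the generalised eigenspace of $\alpha$ with eigenvalue $\lambda$, so that $\mathfrak n=\bigoplus_{\lambda\ge\lambda_1}V_\lambda$ and $(\delta_\mu)_*=\e^{\log\mu\,\alpha}$ acts on $V_\lambda$ as $\mu^\lambda$ times a unipotent operator which is polynomial in $\log\mu$ (since, in the purely real case, $\alpha|_{V_\lambda}=\lambda\,\Id+N_\lambda$ with $N_\lambda$ nilpotent).

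The first step is the key estimate: \emph{for every $\epsilon>0$ there is $C_\epsilon>0$ such that $d_R(x,y)\le C_\epsilon\,\rho(x,y)^{\lambda_1-\epsilon}$ whenever $x,y$ lie in a fixed compact set and $\rho(x,y)\le 1$.} By left-invariance it suffices to take $y=e$ and $r:=\rho(e,x)$ small. Writing $x=\delta_r(z)$ with $z$ on the compact unit sphere $\{\rho(e,\cdot)=1\}$ and passing to exponential coordinates, each $V_\lambda$-component of $\log x=(\delta_r)_*\log z$ has Euclidean norm at most $C\,r^\lambda(1+|\log r|)^{m_\lambda}$, hence at most $r^{\lambda_1-\epsilon}$ once $r$ is small, because $\lambda\ge\lambda_1$. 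Summing over $\lambda$ and comparing $d_R$ with the Euclidean distance in the chart gives the bound for small $r$; the remaining range of $\rho$ on a compact set is absorbed into the constant. (When $\lambda_1=1$, Remark~\ref{huom:existence_of_distance} forces $V_1$ to be a genuine eigenspace, so there one even gets the clean comparison $d_R\lesssim\rho$; the loss of $\epsilon$ only serves to absorb the unipotent factors on the higher eigenspaces, and on $V_{\lambda_1}$ itself when $\lambda_1>1$.)

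For part (i), let $\gamma$ be a non-constant curve; restricting it to a subinterval I may assume its image $\Gamma$ is a non-degenerate continuum contained in a fixed compact set. Since $\Gamma$ is connected and has more than one point, $p\mapsto d_R(p_0,p)$ carries it onto a non-degenerate interval, so $\dim_H^{d_R}\Gamma\ge 1$. On the other hand, for each $\epsilon>0$ the estimate above makes the identity map $(N,\rho)\to(N,d_R)$ locally $(\lambda_1-\epsilon)$-Hölder near $\Gamma$, whence $\dim_H^{d_R}\Gamma\le\frac{1}{\lambda_1-\epsilon}\dim_H^{\rho}\Gamma$. Combining the two inequalities and letting $\epsilon\to 0$ gives $\dim_H^\rho\Gamma\ge\lambda_1$, hence the image of $\gamma$ has $\rho$-Hausdorff dimension at least $\lambda_1$. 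For part (ii), if $X$ is an eigenvector of $\alpha$ with eigenvalue $\lambda_1$ then $\mathbb R X$ is an abelian subalgebra, so $t\mapsto\exp(tX)$ is a one-parameter subgroup and $\delta_\mu(\exp(tX))=\exp(\mu^{\lambda_1}tX)$; hence for $0\le s\le t$, left-invariance and homogeneity give $\rho(\exp(sX),\exp(tX))=\rho(e,\exp((t-s)X))=(t-s)^{1/\lambda_1}\rho(e,\exp X)$, with $\rho(e,\exp X)>0$. Thus $t\mapsto\exp(tX)$ is $(1/\lambda_1)$-Hölder from a compact interval into $(N,\rho)$, so its image has $\rho$-Hausdorff dimension at most $\lambda_1$; together with part (i) this makes it exactly $\lambda_1$.

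The only genuinely delicate point is the key Hölder estimate, and within it the unipotent (Jordan-block) factors in $(\delta_r)_*$: these produce logarithmic corrections — already on $\mathbb R^2$ with $\alpha$ equal to $2\,\Id$ plus a nilpotent block one finds $d_R\asymp\rho^2|\log\rho|$ rather than $d_R\asymp\rho^2$ — which is why one must settle for the exponent $\lambda_1-\epsilon$, a loss that is harmless for Hausdorff dimension. The remaining ingredients (properness of $\rho$ and the behaviour of Hausdorff dimension under Hölder maps) are standard.
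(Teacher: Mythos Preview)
The paper does not prove this proposition: it is quoted as Lemma~5.1 of \cite{avain:CPS} and no argument is given. So there is no ``paper's own proof'' to compare against. That said, your argument is correct. The H\"older comparison $d_R\le C_\epsilon\,\rho^{\lambda_1-\epsilon}$ obtained by writing $x=\delta_r(z)$ with $z$ on the (compact, by properness) unit $\rho$-sphere and expanding $(\delta_r)_*$ on each generalised eigenspace is exactly the right mechanism, and your observation that the nilpotent blocks only cost an $\epsilon$ in the exponent (hence nothing at the level of Hausdorff dimension) is the point that needs care. The lower bound $\dim_H^{d_R}\Gamma\ge 1$ for a non-degenerate continuum, together with the standard transformation rule $\dim_H^{d_R}\Gamma\le(\lambda_1-\epsilon)^{-1}\dim_H^\rho\Gamma$, then gives part~(i).

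For part~(ii) the paper does essentially reproduce your computation, though not under Proposition~\ref{prop:CPS-H-dim-lemma} but inside the proof of Theorem~\ref{thm:computable-reachability-sets}: there it is phrased as ``$(L,\rho)$ is biLipschitz equivalent to $(\mathbb{R},\norm{\cdot}^{1/\lambda})$'' via Remark~\ref{2017-09-12_kysymys}. Your explicit identity $\rho(\exp(sX),\exp(tX))=|t-s|^{1/\lambda_1}\rho(e,\exp X)$ is in fact stronger than a H\"older bound---it is an isometry onto a snowflaked line---so you could conclude $\dim_H^\rho=\lambda_1$ directly without invoking part~(i), though the route you took is also fine.
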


The next result is also a consequence of the work of \cite{avain:CPS}.
It tells us that whenever one is able to prove that some subgroups are preserved in the sense that all their left cosets are preserved, then the normalisers of these subgroups provide new invariants.

\begin{prop}[\cite{avain:CPS}] \label{lemma:CPS-corollary-about-normalisers}
	Let \( F \colon (N_1,\alpha) \to (N_2,\beta) \) be a biLipschitz map between homogeneous groups, and suppose \( A_1 \) and \( A_2 \) are subgroups of \( N_1 \) and \( N_2 \), respectively. Let \( \mathcal{N}(A_i) \) be the normaliser of \( A_i \), for \( i \in \{1,2\} \).   If for all \(p \in N_1\) we have \( F(pA_1) = F(p)A_2 \), then it holds \( F(p \mathcal{N}(A_1)) = F(p)(\mathcal{N}(A_2)) \) for all \(p \in N_1\).
\end{prop}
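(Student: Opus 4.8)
The plan is to show that the hypothesis $F(pA_1)=F(p)A_2$ for all $p$ says precisely that $F$ maps the partition of $N_1$ into left cosets of $A_1$ onto the partition of $N_2$ into left cosets of $A_2$; then the normaliser is recovered from this coset structure together with the group law, and the group law on each $N_i$ is itself recoverable (up to the relevant equivalence) from the metric data that $F$ respects. Concretely, I would first record the elementary fact that for a subgroup $A\le N$ and $q\in N$, the coset $qA$ is a left coset of $A$ (i.e.\ equals $q'A$ for some $q'$, which forces $qA=q'A$) and that $q\in\mathcal N(A)$ iff $q^{-1}Aq=A$ iff the map $x\mapsto qx$ permutes the left cosets of $A$ by sending $xA$ to $(qx)A$ in a way compatible with right multiplication by $A$ — more usefully, $q\in\mathcal N(A)$ iff $Aq=qA$ as sets. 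So I want to detect the relation ``$Aq=qA$'' on the target side.

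The key step is to transport left translations. Since $F$ is a biLipschitz equivalence of homogeneous groups, the composition $L_{F(p)^{-1}}\circ F\circ L_{p}$ is again a biLipschitz self-map fixing the identity, and by the hypothesis it sends $A_1$-cosets to $A_2$-cosets; iterating, one gets that for every $p$ the ``conjugated'' map still preserves the coset partitions. The heart of the argument is then: if $F(pA_1)=F(p)A_2$ for \emph{all} $p\in N_1$, then writing $q\in\mathcal N(A_1)$ and using that $A_1q=qA_1$, I want to evaluate $F$ on the set $pqA_1=pA_1q$ (here is where normality of $A_1$ enters: as sets, $pqA_1=pA_1q$, and $pA_1q=\bigcup_{a\in A_1}(pa)qA_1$... ) and show it equals $F(p)F(q)$-translate of $A_2$ on both computations, forcing $F(p)^{-1}F(pq)\in\mathcal N(A_2)$. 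The cleanest route: apply Proposition 2.8's hypothesis not to $F$ alone but observe that the set-valued map $pA_1\mapsto F(p)A_2$ is a bijection of coset spaces $N_1/A_1\to N_2/A_2$ that intertwines the left $N_1$-action on $N_1/A_1$ (via $F$ on points) — but strictly one must be careful because $F$ need not be a homomorphism, so the ``action'' is only the tautological one. The genuinely correct mechanism, which I would follow, is the one underlying Lemma 5.1 of \cite{avain:CPS}: the normaliser $\mathcal N(A_i)$ is characterised metrically/topologically as $\{q : qA_i=A_iq\}$, and $A_iq=A_iq'$-type incidences among cosets are preserved by $F$ because they are expressible purely through which points lie in which coset. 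Thus $F$ maps $\{q: qA_1=A_1q\}$-indexed coset incidences to the corresponding ones for $A_2$, yielding $F(p\,\mathcal N(A_1))=F(p)\,\mathcal N(A_2)$.

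The main obstacle I anticipate is exactly that $F$ is merely biLipschitz, not a Lie-group isomorphism, so ``$F$ conjugates left translations to left translations'' is false in general and must be replaced by the statement that $F$ preserves the combinatorial incidence structure of cosets; one must verify that the normaliser is genuinely determined by this incidence structure. The way to secure this is to reduce to a pointwise statement: $q\in\mathcal N(A_1)$ iff for every $p$ one has $pqA_1=p A_1 q$ as subsets of $N_1$, and then check that both sides are unions of $A_1$-cosets whose $F$-images are forced to be unions of $A_2$-cosets agreeing set-theoretically, so that $F(p)^{-1}F(pq)$ must normalise $A_2$. Once this is set up, the remaining manipulations — that the normaliser is a subgroup, that iterated normalisers follow by induction (which gives the parenthetical remark in Theorem~\ref{thm:computable-reachability-sets2}\ref{thm-item-2}) — are routine. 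I would close by noting that the hypothesis is symmetric in $F$ and $F^{-1}$, so the inclusion $F(p\,\mathcal N(A_1))\subseteq F(p)\,\mathcal N(A_2)$ and its reverse both hold, giving the claimed equality.
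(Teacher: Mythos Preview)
Your plan has a genuine gap: it never makes essential use of the biLipschitz hypothesis, and without it the statement is false. You try to recover $\mathcal N(A_1)$ from the \emph{coset incidence structure} alone, i.e.\ from the partition of $N_1$ into left $A_1$-cosets. But that partition does not determine the normaliser: any bijection $F\colon N_1\to N_2$ that sends each $A_1$-coset to an $A_2$-coset satisfies the hypothesis $F(pA_1)=F(p)A_2$, yet such a bijection can scramble the coarser partition into $\mathcal N(A_1)$-cosets arbitrarily. Concretely, your attempt to compute $F(pqA_1)$ ``in two ways'' stalls exactly where you flag it: from $pqA_1=pA_1q$ you cannot extract information about $F(pq)$ relative to $F(p)$, because $F$ is not a homomorphism and the expression $pA_1q$ has no meaning on the $N_2$ side. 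The claim that ``the normaliser is genuinely determined by this incidence structure'' is precisely what fails.

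The paper's proof supplies the missing metric ingredient. It invokes the characterisation (Lemma~3.2 of \cite{avain:CPS}, not Lemma~5.1) that for a subgroup $A$ of a homogeneous group, $q\in p\,\mathcal N(A)$ if and only if the cosets $qA$ and $pA$ are at \emph{finite Hausdorff distance}. This is a purely metric condition, and a biLipschitz map preserves finiteness of Hausdorff distance between subsets. Combined with the hypothesis $F(qA_1)=F(q)A_2$, one immediately gets the chain of equivalences
\[
q\in p\,\mathcal N(A_1)\ \Longleftrightarrow\ d_H(qA_1,pA_1)<\infty\ \Longleftrightarrow\ d_H(F(q)A_2,F(p)A_2)<\infty\ \Longleftrightarrow\ F(q)\in F(p)\,\mathcal N(A_2).
\]
This is where the biLipschitz assumption enters, and it is indispensable.
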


\begin{proof}
	Fix \( p,q \in N_1  \). Then the following are equivalent statements
	\begin{lista}
		\item \( q \in p \mathcal{N}(A_1) \).
		\item Hausdorff distance of \( qA_1 \) and \( pA_1 \) is finite.
		\item Hausdorff distance of \( F(qA_1) = F(q) A_2 \) and \( F(pA_1) = F(p) A_2 \) is finite.
		\item \( F(q) \in F(p) \mathcal{N}(A_2) \).
	\end{lista}
	Indeed, the equivalences (i)\(\LRA\)(ii) and (iii)\(\LRA\)(iv) are given by \cite[Lemma 3.2]{avain:CPS}. The equivalence (ii)\(\LRA\)(iii) is a consequence of \( F \) being a biLipschitz map.
\end{proof}

We will need to understand quotients of homogeneous groups: for that the important lemma is the following straightforward consequence of the ideas of \cite{LeDonne-Rigot2} (see their results 2.8 and 2.10 in particular).

\begin{lemma} \label{huom:preserved-subalgebra-good-quotient}
Suppose \( H \) is a normal subgroup of a homogeneous group \( (N,\alpha) \). If \( \mathfrak{h} \) is preserved under \( \alpha \), then the quotient \( N/H \) is a homogeneous group when equipped with the induced derivation
\( \hat{\alpha} \).
Moreover, if \( \rho \) is a homogeneous distance on \( (N,\alpha) \), then \( \hat{\rho} \) given by
\begin{equation*}
\hat{\rho}(n H, m H) = \inf\{ \rho(n,mh) : h \in H \}
\end{equation*}
is a homogeneous distance on \( (N/H,\hat\alpha) \) for which the projection \( N \to N/H \) is a 1-Lipschitz map.
\end{lemma}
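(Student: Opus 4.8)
The plan is to treat the algebraic and the metric parts separately. For the algebraic part, recall that a connected subgroup of a simply connected nilpotent Lie group is automatically closed, so \( N/H \) is again a simply connected nilpotent Lie group with Lie algebra \( \mathfrak{n}/\mathfrak{h} \). Since \( \alpha(\mathfrak{h})\subseteq\mathfrak{h} \), the formula \( \hat\alpha(X+\mathfrak{h}):=\alpha(X)+\mathfrak{h} \) is well defined, and it is a derivation of \( \mathfrak{n}/\mathfrak{h} \) because \( \alpha \) is one of \( \mathfrak{n} \). Picking a basis of \( \mathfrak{n} \) adapted to \( \mathfrak{h} \) writes \( \alpha \) in block upper-triangular form, so its characteristic polynomial is the product of those of \( \alpha|_{\mathfrak{h}} \) and of \( \hat\alpha \); hence every eigenvalue of \( \hat\alpha \) is an eigenvalue of \( \alpha \), in particular has positive real part, and \( (N/H,\hat\alpha) \) is a homogeneous group by Definition~\ref{maar:homogeneous_group}.

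For the metric part, the first step is to reformulate \( \hat\rho \) symmetrically as \( \hat\rho(nH,mH)=\inf\{\rho(x,y):x\in nH,\ y\in mH\} \). The content here is that the infimum may be computed with the first representative fixed equal to \( n \), as in the statement: for \( h,h'\in H \), left-invariance of \( \rho \) gives \( \rho(nh',mh)=\rho\bigl(n,(nh'^{-1}n^{-1})mh\bigr) \), and \( (nh'^{-1}n^{-1})mh\in mH \) since \( H \) is normal; applying this with \( n \) replaced by \( nh' \) yields the opposite inequality, so \( \inf_{h}\rho(n',mh) \) does not depend on the representative \( n' \) of \( nH \) and equals the displayed symmetric quantity. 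From this reformulation the distance axioms are routine: \( \hat\rho\ge 0 \) and \( \hat\rho(nH,nH)=0 \) are obvious; symmetry is built in; the triangle inequality follows by choosing near-optimal representatives and using that the starting coordinate in \( \inf_h\rho(\,\cdot\,,\,\cdot\,) \) may be taken to be any representative of the source coset; and \( \hat\rho(nH,mH)=0 \) forces \( mh_k\to n \) for some \( h_k\in H \), hence \( n\in mH \) because \( mH \) is closed and \( \rho \) induces the manifold topology, i.e.\ \( nH=mH \).

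It remains to check left-invariance, the Lipschitz bound, the topology, and homogeneity. Left-invariance of \( \hat\rho \) is immediate since \( gmH=\{gmh:h\in H\} \) and \( \rho \) is left-invariant. Taking \( h=e \) in the defining infimum gives \( \hat\rho(\pi(n),\pi(m))\le\rho(n,m) \), so the projection \( \pi\colon N\to N/H \) is \( 1 \)-Lipschitz, hence continuous; moreover, if \( \pi(m_0)\in\pi(B_\rho(n,r)) \) with \( m_0\in B_\rho(n,r) \), then every \( \pi(k) \) with \( \hat\rho(\pi(m_0),\pi(k))<r-\rho(n,m_0) \) has a representative \( kh\in B_\rho(n,r) \), so \( \pi(B_\rho(n,r)) \) is \( \hat\rho \)-open, \( \pi \) is a continuous open surjection, and \( \hat\rho \) induces the quotient topology on \( N/H \), which is its manifold topology. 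For homogeneity, \( \mathfrak{h} \) being \( \alpha \)-invariant is also \( \e^{\log\lambda\,\alpha} \)-invariant, so the dilation \( \delta_\lambda \) of \( N \) satisfies \( \delta_\lambda(H)=H \) and descends to the automorphism of \( N/H \) with differential \( \e^{\log\lambda\,\hat\alpha} \), that is, to \( \hat\delta_\lambda \); reindexing \( h=\delta_\lambda(h') \), using that \( \delta_\lambda \) is an automorphism and \( \rho \) is homogeneous,
\[
\hat\rho\bigl(\hat\delta_\lambda(nH),\hat\delta_\lambda(mH)\bigr)=\inf_{h'}\rho\bigl(\delta_\lambda(n),\delta_\lambda(mh')\bigr)=\lambda\inf_{h'}\rho(n,mh')=\lambda\,\hat\rho(nH,mH).
\]

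I expect the only genuinely non-formal points to be the two indicated above: the symmetric reformulation of \( \hat\rho \), where normality of \( H \) together with left-invariance of \( \rho \) is used essentially, and the fact that \( \hat\rho \) metrises the manifold topology on \( N/H \), which relies on \( \pi \) being an open map. Everything else is bookkeeping.
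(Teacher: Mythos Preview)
Your argument is correct. The paper itself does not give a proof of this lemma: it states the result and refers to \cite{LeDonne-Rigot2}, specifically their results 2.8 and 2.10, calling it a ``straightforward consequence'' of the ideas there. What you have written is a clean, self-contained verification of exactly those details---the well-posedness of \( \hat\alpha \), the positivity of its spectrum via block-triangular form, the independence of \( \hat\rho \) from representatives (where normality and left-invariance are used essentially), the distance axioms, the fact that \( \hat\rho \) metrises the quotient (manifold) topology via the open-map argument, and the dilation compatibility. There is nothing to correct; your write-up simply supplies the proof the paper outsources.
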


\subsection{Isometries of not necessarily solvable groups}

Above we said that two connected Lie groups \( G\) and \( H\) \emph{can be made isometric} if there are left-invariant Riemannian metrics \( \rho_G \) and \( \rho_H \)
so that \( (G,\rho_G) \) is isometric  to \( (H,\rho_H) \). 
By \cite[Proposition 2.4]{avain:KL17} 
requiring the distances \( \rho_G \) and \( \rho_H \) to be Riemannian is not restrictive: We could suppose only that the distances are left-invariant and induce the respective manifold topologies.
In any case, while quasi-isometries give a transitive relation between Lie groups, the relation by isometries is not transitive; we next wish to show an instructive example.

\begin{prop} \label{prop:CKLNO-selvennys}
	Both the groups \( \pp{SL}(2,\mathbb{R}) \) and \( \pp{PSL}(2,\mathbb{R}) \) can be made isometric to the group \( \mathbb{S}^1 \times \pp{Aff}(\mathbb{R})^+  \), but the groups \( \pp{SL}(2,\mathbb{R}) \) and \( \pp{PSL}(2,\mathbb{R}) \) cannot be made isometric (to each other).
\end{prop}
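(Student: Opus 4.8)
The plan is to analyze the isometry group of the relevant simply connected model space and exploit the structure of its discrete central subgroups. Recall that $\mathrm{PSL}(2,\mathbb{R})$ and $\mathbb{S}^1 \times \mathrm{Aff}(\mathbb{R})^+$ are both quotients, by discrete central subgroups, of the universal cover $\widetilde{\mathrm{SL}}(2,\mathbb{R})$. Concretely, $\pi_1(\mathrm{PSL}(2,\mathbb{R})) \cong \mathbb{Z}$, the center of $\widetilde{\mathrm{SL}}(2,\mathbb{R})$ is infinite cyclic, $\mathrm{SL}(2,\mathbb{R})$ is the quotient by $2\mathbb{Z}$, and $\mathrm{PSL}(2,\mathbb{R})$ is the quotient by the full $\mathbb{Z}$. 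The group $\mathbb{S}^1 \times \mathrm{Aff}(\mathbb{R})^+$ is diffeomorphic to $\mathbb{S}^1 \times \mathbb{R}^2$, so it has fundamental group $\mathbb{Z}$ as well; its universal cover is $\mathbb{R} \times \mathrm{Aff}(\mathbb{R})^+$, which is isomorphic as a Lie group to $\widetilde{\mathrm{SL}}(2,\mathbb{R})$ (they share the Lie algebra $\mathfrak{sl}(2,\mathbb{R})$, and both are simply connected). Thus all three groups in the statement are discrete central quotients of one fixed simply connected Lie group $\widetilde{G} := \widetilde{\mathrm{SL}}(2,\mathbb{R})$.

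First I would fix a left-invariant Riemannian metric on $\widetilde{G}$ that is also right-invariant (equivalently, $\mathrm{Ad}$-invariant; such a metric exists because $\mathfrak{sl}(2,\mathbb{R})$ is semisimple, take the negative of the Killing form on the compact part suitably adjusted, or more simply any bi-invariant metric coming from the Killing form). For such a metric the full isometry group is generated by left translations, right translations and automorphisms preserving the metric, and in particular the center $Z(\widetilde{G})$ acts by isometries; hence the quotient of $\widetilde{G}$ by any central subgroup inherits a left-invariant metric, and one checks that $\widetilde{G} \to \widetilde{G}/\Gamma$ is a Riemannian covering. Doing this for $\Gamma = 2\mathbb{Z} \subset Z$, $\Gamma = \mathbb{Z} = Z$, and for the subgroup giving $\mathbb{S}^1 \times \mathrm{Aff}(\mathbb{R})^+$, one sees that each of $\mathrm{SL}(2,\mathbb{R})$, $\mathrm{PSL}(2,\mathbb{R})$, $\mathbb{S}^1 \times \mathrm{Aff}(\mathbb{R})^+$ is isometric to $\widetilde{G}/\Gamma$ for the appropriate lattice $\Gamma$ in the (one-dimensional) center. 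The positive assertions — that $\mathrm{SL}(2,\mathbb{R})$ and $\mathrm{PSL}(2,\mathbb{R})$ can each be made isometric to $\mathbb{S}^1 \times \mathrm{Aff}(\mathbb{R})^+$ — then follow once I identify, inside the one-parameter center of $\widetilde{G}$, the three relevant lattices and note they are pairwise related by a scaling automorphism; I would produce explicitly an automorphism of $\widetilde{G}$ (rescaling in the center direction) carrying one metric quotient to another, or alternatively just vary the metric so that the covolume in the central direction matches.

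For the negative assertion I would argue that $\mathrm{SL}(2,\mathbb{R})$ and $\mathrm{PSL}(2,\mathbb{R})$ cannot be made isometric to each other. The clean way is topological: any left-invariant metric on a Lie group is complete, and two isometric complete Riemannian manifolds are in particular diffeomorphic, hence homeomorphic. So it suffices to observe that $\mathrm{SL}(2,\mathbb{R})$ and $\mathrm{PSL}(2,\mathbb{R})$ are not homeomorphic. This can be seen from $\pi_1$: both have fundamental group $\mathbb{Z}$, so that alone does not separate them — instead one uses that the universal cover $\widetilde{G}$ double-covers $\mathrm{SL}(2,\mathbb{R})$ which in turn double-covers $\mathrm{PSL}(2,\mathbb{R})$, and that $\mathrm{PSL}(2,\mathbb{R})$ contains a copy of $\mathrm{SO}(2) \cong \mathbb{S}^1$ that is \emph{not} null-homotopic and generates $\pi_1$, whereas in $\mathrm{SL}(2,\mathbb{R})$ the maximal compact $\mathrm{SO}(2)$ wraps twice around the generator of $\pi_1$ (i.e. the inclusion $\mathrm{SO}(2)\hookrightarrow \mathrm{SL}(2,\mathbb{R})$ induces multiplication by $2$ on $\pi_1 \cong \mathbb{Z}$). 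Equivalently: the maximal compact subgroup of $\mathrm{PSL}(2,\mathbb{R})$ is a circle whose class generates $\pi_1$, while for $\mathrm{SL}(2,\mathbb{R})$ it is a circle whose class is twice a generator. These are homotopy invariants that distinguish the two spaces, so they are not homeomorphic, hence not isometric under any left-invariant metrics. The same conclusion can also be drawn by quoting Fact~\ref{thm:graaf-CKLNO} and its non-transitivity discussion, but the topological route is self-contained.

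The main obstacle I anticipate is the positive direction: one must be careful that merely having isometric universal covers is not enough — the deck groups must act in a compatible way. The real content is to check that the three lattices $\Gamma_{\mathrm{SL}}, \Gamma_{\mathrm{PSL}}, \Gamma_{\mathbb{S}^1\times\mathrm{Aff}^+}$ sit inside the \emph{center} of $\widetilde{G}$ (so that all the quotient metrics remain left-invariant) and that there is a metric-preserving automorphism of $\widetilde{G}$, or a re-choice of bi-invariant metric, intertwining the pairs $(\widetilde{G}/\Gamma_{\mathrm{SL}})$ with $(\widetilde{G}/\Gamma_{\mathbb{S}^1\times\mathrm{Aff}^+})$ and $(\widetilde{G}/\Gamma_{\mathrm{PSL}})$ with $(\widetilde{G}/\Gamma_{\mathbb{S}^1\times\mathrm{Aff}^+})$. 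The point that makes this work, and that should be emphasized, is that $\mathbb{R}\times\mathrm{Aff}(\mathbb{R})^+ \cong \widetilde{\mathrm{SL}}(2,\mathbb{R})$ so that $\mathbb{S}^1 \times \mathrm{Aff}(\mathbb{R})^+$ is itself a central quotient of $\widetilde{\mathrm{SL}}(2,\mathbb{R})$ by a lattice that can be rescaled to coincide with either $2\mathbb{Z}$ or $\mathbb{Z}$; once this algebraic fact is in place, the isometries are an immediate consequence of pulling back a bi-invariant metric.
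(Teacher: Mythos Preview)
Your proposal contains two fundamental errors, one in each direction of the argument.

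For the positive part, you assert that the universal cover \( \mathbb{R}\times\mathrm{Aff}(\mathbb{R})^+ \) is isomorphic as a Lie group to \( \widetilde{\mathrm{SL}}(2,\mathbb{R}) \) because ``they share the Lie algebra \( \mathfrak{sl}(2,\mathbb{R}) \)''. This is false: the Lie algebra of \( \mathbb{R}\times\mathrm{Aff}(\mathbb{R})^+ \) is the solvable algebra \( \mathbb{R}\oplus A_2 \), not the simple algebra \( \mathfrak{sl}(2,\mathbb{R}) \). Indeed the entire content of the proposition (and of Proposition~\ref{prop:CKLNO-selvennys2} immediately after) is precisely that these \emph{non-isomorphic} groups can nonetheless be made isometric. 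Thus \( \mathbb{S}^1\times\mathrm{Aff}(\mathbb{R})^+ \) is not a central quotient of \( \widetilde{\mathrm{SL}}(2,\mathbb{R}) \), and your entire framework of ``rescaling lattices in the center'' collapses. Relatedly, there is no bi-invariant Riemannian metric on \( \mathrm{SL}(2,\mathbb{R}) \): the Killing form on \( \mathfrak{sl}(2,\mathbb{R}) \) is indefinite. The paper's approach is completely different: it works directly inside \( G=\mathrm{SL}(2,\mathbb{R}) \) or \( \mathrm{PSL}(2,\mathbb{R}) \), uses the Iwasawa decomposition \( G=ANK \) with \( AN\cong\mathrm{Aff}(\mathbb{R})^+ \) and \( K\cong\mathbb{S}^1 \), averages an arbitrary left-invariant distance over right \( K \)-translations to make it bi-\(K\)-invariant, and then checks that the bijection \( AN\times K\to G \), \( (s,k)\mapsto sk^{-1} \), pulls this back to a left-invariant distance on the direct product.

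For the negative part, your ``clean topological'' route cannot work because \( \mathrm{SL}(2,\mathbb{R}) \) and \( \mathrm{PSL}(2,\mathbb{R}) \) \emph{are} diffeomorphic: both are diffeomorphic to \( \mathbb{S}^1\times\mathbb{R}^2 \) via their Iwasawa decompositions. The invariant you propose, namely how the maximal compact subgroup sits inside \( \pi_1 \), presupposes a group structure and is not preserved by an arbitrary homeomorphism (or isometry); a putative isometry has no reason to respect maximal compacts. The paper does not attempt a topological argument here; it simply cites the result, which ultimately rests on Gordon's analysis of isometry groups of left-invariant metrics.
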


The argument  for the fact that the groups \( \pp{SL}(2,\mathbb{R}) \) and \( \pp{PSL}(2,\mathbb{R}) \) cannot be made isometric is readily recorded in \cite[Proposition 2.11]{avain:fasslerledonne}, but it goes back to Cornulier, and eventually to \cite[Theorem 2.2]{gordon-transitive}. The first part of Proposition \ref{prop:CKLNO-selvennys} may be deduced from  \cite[Theorem 3.24]{avain:CKLNO}, but in this particular example the argument of \cite{avain:CKLNO} simplifies so much that we feel it is worth giving the following elementary proof.

\begin{proof}[Proof of the first part of Proposition \ref{prop:CKLNO-selvennys}]
Let \( G \) denote either \( \pp{SL}(2,\mathbb{R}) \) or \( \pp{PSL}(2,\mathbb{R}) \), and let \( d \) be a left-invariant admissible distance on \( G \). In either case, \( G \) has the Iwasawa decomposition \( G = A N K \), where the factor \( AN \) forms a subgroup isomorphic to \( \pp{Aff}(\mathbb{R})^+ \) and \( K \) is instead isomorphic to \( \mathbb{S}^1 \). Our aim is to construct from \( d \) a new metric \( d' \) in \( G\) and find a metric \( d'' \) on \( AN \times K \) so that \( (G,d') \) is isometric to \( (AN \times K, d'') \).
	
	We define, taking the advantage of the compactness of \( K\), a new distance function on \( G \) by the formula
	\begin{equation*}
	d'(g,h) = \sup_{k \in K} d(gk, hk) \piste
	\end{equation*}
	It is trivial that \( d' \) satisfies the axioms of a distance function and that it is left-invariant.
	One may also see by a straightforward argument that  any open \( d' \)-ball contains an open \( d \)-ball. Consequently, as  \( d'(g,h) \ge d(g,h) \) for all \( g,h \in G \), then the distance  \( d' \) induces the same topology as \( d \).

	We define \( d'' \) to be the pull-back distance on \( AN \times K\) via the homeomorphism \( \omega \colon (AN \times K) \to ANK \) given by \( \omega(s,k) = s k^{-1} \). The resulting distance is left-invariant since for any fixed \( k,k_1,k_2 \in K \) and \( s,s_1,s_2 \in AN \) we have
	\begin{align*}
	d''((s,k)(s_1,k_1),(s,k)(s_2,k_2)) 
	&=
	d'(ss_1(kk_1)^{-1},ss_2,(kk_2)^{-1})
	\\
	&= 
	\sup_{k' \in K} d(ss_1k_1^{-1} k^{-1}k',ss_2,k_2^{-1}k^{-1}k')
	\\
	&= 
	\sup_{k'' \in K} d(ss_1k_1^{-1} k'',ss_2,k_2^{-1}k'')
	\\
	&= 
	\sup_{k'' \in K} d(s_1k_1^{-1} k'',s_2,k_2^{-1}k'')
	\\
	&= 
	d'(s_1k_1^{-1},s_2,k_2^{-1}) = d''((s_1,k_1),(s_2,k_2)) \piste
	\end{align*}
	In conclusion, the map \( \omega \) is an isometry between \( (G,d') \) and \( (AN \times K, d'') \).
\end{proof}

The proof of the following fact is just slightly more involved, and the details are recorded in \cite[Theorem 3.24]{avain:CKLNO}. The main difference is that one does not have a compact factor \( K\) in the Iwasawa decomposition, but instead there is a non-compact central group involved.
\begin{prop} \label{prop:CKLNO-selvennys2}
	The universal cover of the group \( \pp{SL}(2,\mathbb{R}) \) can be made isometric to the group \( \mathbb{R} \times \pp{Aff}(\mathbb{R})^+  \).
\end{prop}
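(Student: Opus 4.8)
The plan is to mimic the proof of the first part of Proposition~\ref{prop:CKLNO-selvennys}, replacing the compact factor $K\cong\mathbb{S}^1$ by the non-compact central one-parameter subgroup that appears in the Iwasawa-type decomposition of $\widetilde{\pp{SL}(2,\mathbb{R})}$. Write $G=\widetilde{\pp{SL}(2,\mathbb{R})}$. It has a decomposition $G=ANZ$ (as a product of manifolds) where $AN$ is a subgroup isomorphic to $\pp{Aff}(\mathbb{R})^+$, and $Z\cong\mathbb{R}$ is the center of $G$, which sits as the preimage of $K\cong\mathbb{S}^1$ under the covering $G\to\pp{PSL}(2,\mathbb{R})$. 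The target group is $\mathbb{R}\times\pp{Aff}(\mathbb{R})^+\cong Z\times AN$, and I want to produce left-invariant admissible distances $d'$ on $G$ and $d''$ on $Z\times AN$ that are isometric.

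First I would start from an arbitrary left-invariant admissible (Riemannian) distance $d$ on $G$ and define
\begin{equation*}
d'(g,h)=\sup_{z\in Z} d(gz,hz).
\end{equation*}
Here the key point requiring care — and the main obstacle compared to the compact case — is that $Z$ is non-compact, so a priori this supremum could be infinite or fail to induce the right topology. One resolves this by exploiting that $Z$ is \emph{central}: right translation by $z$ equals left translation by $z$, and hence $d(gz,hz)=d(zg,zh)=d(g,h)$ when $d$ is bi-invariant in the $Z$-direction; more robustly, one should first replace $d$ by an admissible left-invariant distance whose restriction to the central $\mathbb{R}$ is already translation-invariant on both sides (this is possible at the Lie-algebra level by choosing a left-invariant metric whose value on $\mathfrak{z}$ is $\Ad$-invariant, which always exists since $\mathfrak{z}$ is central and one-dimensional). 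With such a choice $d(gz,hz)=d(g,h)$ for $z\in Z$, the supremum is actually attained (it equals $d(g,h)$), $d'=d$ is automatically left-invariant, and there is nothing to fix; the substance of the argument is therefore pushed into producing the good starting metric rather than into the averaging. Alternatively, following \cite[Theorem 3.24]{avain:CKLNO}, one keeps a genuine sup but over the relevant parameter and checks finiteness using that $g^{-1}h$ ranges in a set on which the $Z$-orbit displacement is bounded; either way the topology claim follows because $d'\ge d$ and every $d'$-ball contains a $d$-ball.

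Then I would define $d''$ on $Z\times AN$ as the pull-back of $d'$ under the homeomorphism $\omega\colon Z\times AN\to ANZ=G$, $\omega(z,s)=sz^{-1}$ (using centrality of $Z$ so that $ANZ=ZAN$ and products rearrange freely). The verification that $d''$ is left-invariant under the \emph{direct product} group law on $Z\times AN$ is the same computation as in the proof of Proposition~\ref{prop:CKLNO-selvennys}: one expands $d''\big((z,s)(z_1,s_1),(z,s)(z_2,s_2)\big)$, uses the definition of $\omega$ and the defining $Z$-invariance of $d'$ to cancel the leading $z$ and $s$ factors, and lands on $d''\big((z_1,s_1),(z_2,s_2)\big)$. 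Since $\omega$ is a homeomorphism, $(Z\times AN,d'')$ is admissible and isometric to $(G,d')$, so $G$ can be made isometric to $\mathbb{R}\times\pp{Aff}(\mathbb{R})^+$. The one genuinely delicate step, as noted, is organizing the $Z$-invariance of the metric on $G$; everything else is the routine bookkeeping already illustrated in the compact-$K$ case, and I would simply refer to \cite[Theorem 3.24]{avain:CKLNO} for the full details while highlighting that the only change is trading the compactness of $K$ for the centrality of the non-compact factor.
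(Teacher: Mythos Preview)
Your proposal contains a genuine error: you assert that the one-parameter subgroup $Z\cong\mathbb{R}$, the preimage of $K\cong\mathbb{S}^1$ under the covering $G\to\pp{PSL}(2,\mathbb{R})$, is the \emph{center} of $G=\widetilde{\pp{SL}(2,\mathbb{R})}$. It is not. The center of $G$ is discrete (isomorphic to $\mathbb{Z}$, the kernel of $G\to\pp{PSL}(2,\mathbb{R})$), whereas the connected lift $\widetilde K\cong\mathbb{R}$ of $K=\pp{SO}(2)$ is not central, since $K$ itself is not central in $\pp{SL}(2,\mathbb{R})$. Both mechanisms you propose for obtaining $d(gz,hz)=d(g,h)$ --- that right and left translation by $z$ coincide, and that an $\Ad$-invariant inner product on $\mathfrak z$ exists ``since $\mathfrak z$ is central'' --- rest on this false premise. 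Without it, the supremum over the non-compact $\widetilde K$ may well be infinite, and your vague alternative (``checks finiteness using that $g^{-1}h$ ranges in a set on which the $Z$-orbit displacement is bounded'') does not supply a reason either.

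The overall strategy is salvageable, but the correct substitute for compactness of $K$ is not centrality of $\widetilde K$: it is compactness of $\Ad(\widetilde K)$. The adjoint representation of $G$ factors through $\pp{SL}(2,\mathbb{R})$, so $\Ad(\widetilde K)=\Ad(K)$ with $K=\pp{SO}(2)$ compact. Hence one may choose an $\Ad(\widetilde K)$-invariant inner product on $\mathfrak g$, obtaining a left-invariant Riemannian metric on $G$ that is also \emph{right}-$\widetilde K$-invariant. With such a metric $d(gz,hz)=d(g,h)$ for $z\in\widetilde K$ holds by right-invariance, the supremum is trivially $d$ itself, and your pull-back via $\omega$ together with the left-invariance check for $d''$ on $\widetilde K\times AN$ then go through exactly as in the compact case. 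The paper does not spell out a proof either, deferring to \cite[Theorem~3.24]{avain:CKLNO}; its one-line hint about a ``non-compact central group'' should in any case not be read as asserting that $\widetilde K$ is central in $G$.
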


Even if the transitivity of isometry-relation is  shown by Proposition \ref{prop:CKLNO-selvennys} to be false in general, we are not aware of counterexamples in the class of simply connected Lie groups. Moreover, Fact \ref{thm:graaf-CKLNO} implies the transitivity among simply connected solvable Lie groups. 
Despite Fact \ref{thm:graaf-CKLNO} some questions remain unanswered, like the following.

\begin{Kysy}
	Is there a non-solvable simply connected group \( G \) and two solvable groups \( S_1,S_2 \), so that both \( S_1 \) and \( S_2 \)  can be made isometric to \( G \) (with different metrics) and \( S_1 \) and \( S_2 \) have different real-shadow, i.e., they cannot be made isometric?
\end{Kysy}

\subsection{Algebraic tools for isometries}

\label{sec:preliminaries_isometries}

The aim of this section is to recall the results related to the real-shadow of a simply connected solvable Lie group, so that after proving Theorem \ref{thm:alkup_yleinen} in Section \ref{sec:semidirect}, we are able to link it to Fact \ref{thm:graaf-CKLNO} and real-shadows. We will make the link explicit in Corollary \ref{prop:R-shadow-and-real-part-reduction}.

We start by recalling a decomposition result which is necessary both for the construction of the real-shadow and also for the statement of Theorem \ref{thm:alkup_yleinen}.
The ingredients of its proof are recorded in \cite[Section 2]{seba-enrico-dilations} while it might be considered well known.

\begin{prop} \label{prop:decomposition_of_a_derivation}
	Let \( \alpha \) be a derivation on a Lie algebra \( \mathfrak{g}\). Then there are derivations \( \ARe \), \( \AIm \) and \( \Anil \) on \( \mathfrak{g} \) satisfying the following properties:
	\begin{lista}
		\item The maps \( \alpha \), \( \ARe \), \( \AIm \) and \( \Anil \) all pairwise commute.
		\item \( \alpha = \ARe + \AIm + \Anil \).
		\item The map
		\( \Anil \) 
		is the nilpotent part of \( \alpha \).
		\item The maps  \( \ARe \) and  \( \AIm \) are semisimple.
		\item The spectrum of \( \ARe \) is real, and the spectrum of \( \AIm \) is purely imaginary.
	\end{lista} 
\end{prop}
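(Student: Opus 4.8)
The statement is a version of the Jordan--Chevalley decomposition adapted to derivations, with the additional refinement that the semisimple part is further split into its ``real'' and ``imaginary'' pieces. The plan is to start from the usual additive Jordan--Chevalley decomposition of $\alpha$ as an endomorphism of the vector space $\mathfrak{g}$, write $\alpha = \alpha_s + \alpha_n$ with $\alpha_s$ semisimple, $\alpha_n$ nilpotent, $[\alpha_s,\alpha_n]=0$, and both of them polynomials in $\alpha$ without constant term. The key classical input (which I would quote from \cite[Section 2]{seba-enrico-dilations}, or prove via the standard argument using the derivation property of the Leibniz rule on generalized eigenspaces $\mathfrak{g}_\lambda \cdot \mathfrak{g}_\mu \subseteq \mathfrak{g}_{\lambda+\mu}$) is that $\alpha_s$ and $\alpha_n$ are again derivations of $\mathfrak{g}$, not merely linear maps. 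Set $\Anil := \alpha_n$; this gives item (iii), and since $\alpha_s,\alpha_n$ are polynomials in $\alpha$, they commute with $\alpha$ and with each other, handling part of (i).

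The next step is to split the semisimple derivation $\alpha_s$ into a real part and an imaginary part. Over $\mathbb{C}$, diagonalize $\alpha_s$ and define $\ARe$ to act on the $\lambda$-eigenspace as multiplication by $\operatorname{Re}(\lambda)$ and $\AIm$ as multiplication by $\mathrm{i}\operatorname{Im}(\lambda)$; then $\alpha_s = \ARe + \AIm$, both are semisimple (item (iv)), $\ARe$ has real spectrum and $\AIm$ purely imaginary spectrum (item (v)), and $\ARe$ and $\AIm$ can each be written as polynomials in $\alpha_s$ (interpolate the functions $\lambda \mapsto \operatorname{Re}(\lambda)$ and $\lambda \mapsto \mathrm{i}\operatorname{Im}(\lambda)$ on the finite spectrum), hence they are polynomials in $\alpha$ as well; this secures the full commuting statement in (i) and the decomposition (ii), once we know they descend to $\mathfrak{g}$ over $\mathbb{R}$. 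The main obstacle is precisely this last point: a priori $\ARe$ and $\AIm$ are only $\mathbb{C}$-linear maps on $\mathfrak{g}\otimes_{\mathbb{R}}\mathbb{C}$, and one must check (a) they are real, i.e. they preserve $\mathfrak{g} \subset \mathfrak{g}\otimes\mathbb{C}$, and (b) they are derivations. For (a), observe that complex conjugation on $\mathfrak{g}\otimes\mathbb{C}$ swaps the $\lambda$- and $\bar\lambda$-eigenspaces of $\alpha_s$ (since $\alpha_s$ is defined over $\mathbb{R}$), and both $\operatorname{Re}(\lambda)$ and $\mathrm{i}\operatorname{Im}(\lambda)$ are invariant under $\lambda \mapsto \bar\lambda$ (the latter because $\overline{\mathrm{i}\operatorname{Im}(\lambda)} = -\mathrm{i}\operatorname{Im}(\lambda) = \mathrm{i}\operatorname{Im}(\bar\lambda)$), so $\ARe$ and $\AIm$ commute with conjugation and therefore restrict to real operators on $\mathfrak{g}$.

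For (b), the cleanest route is to note that a $\mathbb{C}$-linear map which is a polynomial (without constant term) in a derivation is again a derivation: if $D$ is a derivation of $\mathfrak{g}\otimes\mathbb{C}$ then $D^k$ satisfies a Leibniz-type identity, and more to the point the generalized eigenspace decomposition for $D$ is compatible with the bracket in the graded sense $[\mathfrak{g}_\lambda,\mathfrak{g}_\mu]\subseteq \mathfrak{g}_{\lambda+\mu}$, so any operator acting on $\mathfrak{g}_\lambda$ as a scalar $f(\lambda)$ with $f$ additive, $f(\lambda+\mu)=f(\lambda)+f(\mu)$, is automatically a derivation; both $\lambda\mapsto\operatorname{Re}(\lambda)$ and $\lambda\mapsto\mathrm{i}\operatorname{Im}(\lambda)$ are additive, and $\lambda\mapsto\lambda$ is too, which simultaneously reproves that $\alpha_s=\ARe+\AIm$ and $\Anil=\alpha-\alpha_s$ are derivations. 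I would present this additivity-of-the-eigenvalue-function observation as the single lemma that drives everything, then assemble (i)--(v) from it together with the reality check above. Uniqueness of the decomposition, though not asked for in the statement, follows from uniqueness of Jordan--Chevalley plus the fact that the real/imaginary split of a semisimple operator with the stated spectral properties is forced; I would mention this in a remark only if the later sections need it.
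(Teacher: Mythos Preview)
Your approach is correct and is precisely the standard argument; the paper itself does not give a proof but only records that ``the ingredients of its proof are recorded in \cite[Section 2]{seba-enrico-dilations}'', so your sketch is in fact more detailed than what the paper provides and follows the same route one would find in that reference.

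One genuine slip to fix: the sentence ``a $\mathbb{C}$-linear map which is a polynomial (without constant term) in a derivation is again a derivation'' is false as written---already $D^2$ fails the Leibniz rule---and should be deleted. You immediately self-correct with ``and more to the point'' and give the right argument: the generalized eigenspaces satisfy $[\mathfrak{g}_\lambda,\mathfrak{g}_\mu]\subseteq\mathfrak{g}_{\lambda+\mu}$, so any operator acting on $\mathfrak{g}_\lambda$ as the scalar $f(\lambda)$ with $f$ \emph{additive} is a derivation. That additivity lemma is indeed the single engine of the proof; just present it directly and drop the misleading polynomial claim. The polynomial-in-$\alpha$ observation is still useful, but only for establishing the pairwise commutation in (i), not for the derivation property.
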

If \( \alpha = \ad_X \) for a vector \( X \) of a Lie algebra, we denote \( \ad_\pp{s}(X) = \ARe + \AIm \) and \( \ad_\pp{si}(X) = \AIm \); In the latter, \laina{si} stands for semisimple and imaginary.

We recall some standard terminology: A Lie algebra \( \mathfrak{g} \) is said to be \emph{of type (R)} if all the eigenvalues of \( \ad_X \) are purely imaginary for all \( X \in \mathfrak{g} \). Instead, a Lie algebra is said to be \emph{completely solvable}, if it is solvable and all these eigenvalues are real.  
The Lie algebra of a simply connected Lie group \( G \) is of type (R) if and only if \( G \) has polynomial growth \cite[Theorem 1.4]{MR0349895}, i.e., the Haar measure of the powers of neighbourhoods of identity grows bounded by a polynomial.

We recall here, using a slightly different viewpoint, the method of \cite{avain:CKLNO} to determine the real-shadow of a simply connected solvable Lie group. The arguments may be found inside the proof of Theorem 4.16 in \cite{avain:CKLNO}.

\begin{lemma}
	\label{eq:choosing-a-correctly}
	Let \( \mathfrak{g} \) be a solvable Lie algebra with nilradical \( \mathfrak{n} \). Then there is a vector subspace \( \mathfrak{a} \subseteq \mathfrak{g} \) so that 
	\begin{lista}
		\item \( \mathfrak{n} \oplus \mathfrak{a} = \mathfrak{g} \),
		\item \( \ad_\pp{s}(X)(Y) =0 \) for all \( X,Y \in \mathfrak{a} \), and
		\item \( [\ad_\pp{s}(X),\ad_\pp{s}(Y)] =0 \) for all \( X,Y \in \mathfrak{a} \).
	\end{lista}
\end{lemma}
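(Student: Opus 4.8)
The plan is to build the subspace $\mathfrak{a}$ incrementally, exploiting the fact that the nilradical $\mathfrak{n}$ already contains everything "semisimple-trivial" and that the semisimple parts $\ad_{\pp s}(X)$ depend only on the class of $X$ modulo $\mathfrak{n}$. First I would record the basic structural input: since $\mathfrak{g}$ is solvable, for every $X \in \mathfrak{g}$ the derivation $\ad_{\pp s}(X)$ (the semisimple part of $\ad_X$ from Proposition~\ref{prop:decomposition_of_a_derivation}) kills $\mathfrak{n}$ — indeed $\ad_{\pp s}(X)$ is a polynomial in $\ad_X$ without constant term, and on a solvable algebra $\ad_X$ maps $\mathfrak{g}$ into $\mathfrak{n}$ and is nilpotent on $\mathfrak{n}$, so its semisimple part annihilates $\mathfrak{n}$. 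Consequently $\ad_{\pp s}(X)$ factors through $\mathfrak{g}/\mathfrak{n}$, and since $\mathfrak{g}/\mathfrak{n}$ is abelian, $\ad_{\pp s}(X)$ depends only on $X \bmod \mathfrak{n}$. This immediately gives a well-defined linear map $\sigma \colon \mathfrak{g}/\mathfrak{n} \to \mathrm{Der}(\mathfrak{g})$, $\sigma(\bar X) = \ad_{\pp s}(X)$, whose image lands in the derivations that kill $\mathfrak{n}$.

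The heart of the argument is then: the image of $\sigma$ is an abelian subalgebra of $\mathrm{Der}(\mathfrak{g})$. To see this, note $[\ad_{\pp s}(X), \ad_{\pp s}(Y)]$ is a derivation that vanishes on $\mathfrak{n}$; I need to show it vanishes on all of $\mathfrak{g}$. Here I would use that $\ad_{\pp s}(X)$ and $\ad_{\pp s}(Y)$ are semisimple derivations commuting with $\ad_X$ and $\ad_Y$ respectively; a standard fact (essentially Jordan decomposition inside the algebraic hull of the solvable group $\mathrm{Ad}(G)$, or alternatively the Lie--Kolchin / Mal'cev splitting for solvable algebraic groups) is that the semisimple parts $\ad_{\pp s}(X)$, $X \in \mathfrak{g}$, lie in a common maximal torus of the algebraic hull, hence commute pairwise. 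If I want to stay elementary I can argue: $\ad_{\pp s}(X)$ and $\ad_{\pp s}(Y)$ both stabilise the descending central series of $\mathfrak{n}$ and act semisimply on each graded piece; since both are semisimple and both commute with the nilpotent operators $\ad_X|_{\mathfrak n}$, $\ad_Y|_{\mathfrak n}$, and since $\mathrm{Ad}(G)$ is a connected solvable group whose semisimple parts generate a torus, they are simultaneously diagonalisable over $\mathbb C$, so they commute. Either way I obtain that $\sigma$ is a Lie algebra homomorphism onto an abelian subalgebra.

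With that in hand, choosing $\mathfrak{a}$ is easy: pick any linear complement $\mathfrak{a}_0$ to $\mathfrak{n}$ in $\mathfrak{g}$ (possible since $\mathfrak n$ has a complementary subspace as a vector space), and set $\mathfrak{a} = \mathfrak{a}_0$. Property~(i) holds by construction. Property~(ii), $\ad_{\pp s}(X)(Y) = 0$ for $X, Y \in \mathfrak{a}$: since $\ad_{\pp s}(X)$ kills $\mathfrak n$ and $\sigma$ is a homomorphism to an abelian algebra, $\ad_{\pp s}(X)(\ad_{\pp s}(Y)\,\cdot\,) $-type relations give $\ad_{\pp s}(X)$ restricted to... — more directly, write $\ad_X = \ad_{\pp s}(X) + (\ad_{\pp{nil}} \text{ part})$; for $Y \in \mathfrak a$ we have $\ad_{\pp s}(X)(Y) \in \mathfrak n$ (as $\ad_X(Y) \in \mathfrak n$ and the nilpotent part also maps into $\mathfrak n$), so if it is nonzero we could not have $\ad_{\pp s}(X)$ semisimple with the nilpotent part nilpotent on a flag unless... — cleanest is: replace $\mathfrak a_0$ by $\mathfrak a := \ker$ of the projection argument, i.e. one shows $\bigcap_{X\in\mathfrak g}\ker(\ad_{\pp s}(X)) \supseteq \mathfrak n$ and $\ad_{\pp s}(X)$, being semisimple and commuting with all the others, has an invariant complement; a simultaneous eigenspace decomposition of the abelian family $\{\ad_{\pp s}(X)\}$ yields the zero-weight space $\mathfrak g_0 \supseteq \mathfrak n$, and one takes $\mathfrak a$ to be a complement to $\mathfrak n$ inside $\mathfrak g_0$, which works because $\dim \mathfrak g_0 - \dim \mathfrak n$ equals... no: one must be careful, $\mathfrak g_0$ need not strictly contain more than $\mathfrak n$. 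The correct fix: take $\mathfrak a$ to be a complement of $\mathfrak n$ inside the zero-weight space of the adjoint action of the torus generated by $\{\ad_{\pp s}(X)\}$ on $\mathfrak g$ — wait, the torus kills $\mathfrak n$ so $\mathfrak g_0 \supseteq \mathfrak n$ and $\mathfrak g/\mathfrak n$ maps isomorphically under $\sigma$ so actually $\mathfrak g = \mathfrak g_0$, meaning $\ad_{\pp s}(X)(\mathfrak g)$... this needs the observation that $\sigma(\mathfrak g/\mathfrak n)$ acts on $\mathfrak g/\mathfrak n$ as zero (abelian quotient), hence $\mathfrak g/\mathfrak n$ is entirely in weight zero, hence for any $X,Y$, $\ad_{\pp s}(X)(Y) \in \mathfrak n$; then choosing $\mathfrak a$ as a $\{\ad_{\pp s}(X)\}$-invariant complement to $\mathfrak n$ in $\mathfrak g$ (possible since the family is semisimple and abelian, so completely reducible) forces $\ad_{\pp s}(X)(\mathfrak a) \subseteq \mathfrak a \cap \mathfrak n = 0$, giving (ii); and (iii) is just the commutativity of $\sigma(\mathfrak g/\mathfrak n)$ established above.

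\textbf{Main obstacle.} The one genuinely non-formal step is proving that the semisimple parts $\ad_{\pp s}(X)$, as $X$ ranges over $\mathfrak g$, commute with one another — i.e. that they generate a torus. This is where the solvability of $\mathfrak g$ is essential (it fails for semisimple $\mathfrak g$), and it rests on the structure theory of solvable algebraic groups (Jordan--Chevalley decomposition in the Zariski closure of $\mathrm{Ad}(G)$, which is a connected solvable algebraic group, so its semisimple elements form a single maximal torus up to conjugacy and the relevant ones all lie in it). I would either cite this cleanly or reprove the special case needed via simultaneous triangularisation over $\mathbb C$ (Lie's theorem) plus the fact that semisimple operators that are upper-triangular in a common basis and commute with a full set of nilpotent "off-diagonal" operators must be simultaneously diagonal. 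Everything else — the factoring through $\mathfrak g/\mathfrak n$, the complete reducibility of a commuting family of semisimple operators, the extraction of an invariant complement — is routine linear algebra.
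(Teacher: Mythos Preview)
Your argument rests on two claims that are both false already in the two–dimensional non-abelian Lie algebra \(\mathfrak g=\langle X,Y\rangle\) with \([X,Y]=Y\) and nilradical \(\mathfrak n=\langle Y\rangle\). First, \(\ad_X\) is \emph{not} nilpotent on \(\mathfrak n\): it acts as the identity on \(Y\). Hence \(\ad_{\pp s}(X)\) does \emph{not} annihilate \(\mathfrak n\); in fact \(\ad_{\pp s}(X)=\ad_X\). Second, \(\ad_{\pp s}\) does not factor through \(\mathfrak g/\mathfrak n\): with the same example, \(\ad_{X+Y}\) is also semisimple but \(\ad_{\pp s}(X+Y)(X)=-Y\neq 0=\ad_{\pp s}(X)(X)\), so \(\ad_{\pp s}(X+Y)\neq\ad_{\pp s}(X)\). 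The same pair shows that your ``main obstacle'' claim --- that the operators \(\ad_{\pp s}(X)\), as \(X\) ranges over \emph{all} of \(\mathfrak g\), pairwise commute --- is false: a direct computation gives \([\ad_{\pp s}(X),\ad_{\pp s}(X+Y)]\neq 0\). (Your algebraic-group heuristic is off: in a connected solvable algebraic group the semisimple elements are each contained in \emph{some} maximal torus, and maximal tori are conjugate, but there is no reason for all of them to lie in a single torus simultaneously.) With these claims gone, the construction of \(\sigma\), the commutativity of its image, and the extraction of an invariant complement all collapse.

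The paper's proof avoids this by not working with all \(X\in\mathfrak g\) at once: it picks a Cartan subalgebra \(\mathfrak c\) of \(\mathfrak g\) with \(\mathfrak g=\mathfrak c+\mathfrak n\) and takes \(\mathfrak a\subset\mathfrak c\) complementing \(\mathfrak n\). Because \(\mathfrak c\) is nilpotent and \(\ad_X\)-invariant for \(X\in\mathfrak c\), the restriction \(\ad_X|_{\mathfrak c}\) is nilpotent, hence \(\ad_{\pp s}(X)|_{\mathfrak c}=0\), giving (ii); and the common weight-space decomposition of \(\mathfrak g\) under the nilpotent subalgebra \(\mathfrak c\) simultaneously diagonalises all the \(\ad_{\pp s}(X)\) for \(X\in\mathfrak c\), giving (iii). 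The point is that the Cartan subalgebra is precisely the device that singles out a subfamily of \(X\)'s for which your desired commutation \emph{does} hold; one cannot bypass it by working with arbitrary complements to \(\mathfrak n\).
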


Such a subspace \( \mathfrak{a} \) is found by noticing that 
there is a Cartan subalgebra \( \mathfrak{c} \) of \( \mathfrak{g} \) so that \( \mathfrak{g} = \mathfrak{c} + \mathfrak{n} \); then \( \mathfrak{a} \) may be chosen inside \( \mathfrak{c} \) to complement \( \mathfrak{n} \).

The following statement gives naturally a very constructive definition of the real-shadow in the level of Lie algebras.

\begin{prop} \label{prop:construction_of_the_real_shadow}
	Let \( \mathfrak{g} \) be a solvable Lie algebra. Choose a vector subspace  \( \mathfrak{a} \subseteq \mathfrak{g} \) with the properties of Lemma \ref{eq:choosing-a-correctly} and let \( \pi_\mathfrak{a} \) denote the projection to \( \mathfrak{a} \) along \( \mathfrak{n} \).
	Define a map 
	\begin{equation*}
	\varphi_{\mathfrak{a}} \colon \mathfrak{g} \to \pp{der}(\mathfrak{g}) \qquad
	\varphi_{\mathfrak{a}}(X) = - \ad_\pp{si}(\pi_\mathfrak{a}(X))
	\end{equation*}
	Then
	\begin{lista}
		\item \( \varphi_{\mathfrak{a}} \) is a homomorphism of Lie algebras, with Abelian image,
		\item the graph of \( \varphi_{\mathfrak{a}} \), \( \pp{Gr}(\varphi_{\mathfrak{a}}) = \{ (X,\varphi_\mathfrak{a}(X)) \mid X \in \mathfrak{g} \} \), is a completely solvable subalgebra of \( \mathfrak{g} \rtimes \pp{der}(\mathfrak{g}) \),
		\item if the vector space \( \mathfrak{g} \) is equipped with the operation defined by
		\begin{equation*}
		[X,Y]_\realshadow = [X,Y] + \varphi_{\mathfrak{a}}(X)(Y) - \varphi_{\mathfrak{a}}(Y)(X)
		\end{equation*}
		then the map \( X \mapsto (X,\varphi_{\mathfrak{a}}(X)) \) is a Lie algebra isomorphism from \( (\mathfrak{g}, [\cdot,\cdot]_\realshadow) \) to \( \pp{Gr}(\varphi_{\mathfrak{a}}) \).
	\end{lista}
	Moreover, for every vector subspace \( \mathfrak{a}' \subset \mathfrak{g} \) as in Lemma \ref{eq:choosing-a-correctly} we have that \( \pp{Gr}(\varphi_{\mathfrak{a}}) \) is isomorphic to \( \pp{Gr}(\varphi_{\mathfrak{a}'}) \).
\end{prop}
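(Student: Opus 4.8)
The plan is to verify the four assertions essentially in the stated order, after extracting from Proposition~\ref{prop:decomposition_of_a_derivation} and the solvability of \( \mathfrak{g} \) the facts on which everything rests. Since \( \mathfrak{g} \) is solvable, \( [\mathfrak{g},\mathfrak{g}]\subseteq\mathfrak{n} \), so \( \ad_X(\mathfrak{g})\subseteq\mathfrak{n} \) for every \( X \), and hence \( \ad_{\pp{s}}(X)(\mathfrak{g}) \), \( \ad_{\pp{si}}(X)(\mathfrak{g}) \) and the nilpotent part \( \ad_{\pp{n}}(X)(\mathfrak{g}) \) all lie in \( \mathfrak{n} \) (each is a polynomial without constant term in \( \ad_X \)). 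Also \( \ad_Z \) is nilpotent for \( Z \) in the nilradical (\( \ad_Z(\mathfrak{g})\subseteq\mathfrak{n} \) and \( \ad_Z|_{\mathfrak{n}} \) is nilpotent), and since \( \pi_{\mathfrak{a}} \) kills \( \mathfrak{n} \) so does \( \varphi_{\mathfrak{a}} \); finally \( \ad_{\pp{si}}(U) \) is a derivation of \( \mathfrak{g} \) by Proposition~\ref{prop:decomposition_of_a_derivation}, so \( \varphi_{\mathfrak{a}} \) genuinely lands in \( \pp{der}(\mathfrak{g}) \). I would also use constantly the identity \( [\delta,\ad_X]=\ad_{\delta(X)} \) valid for any derivation \( \delta \).

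The hard part — and the place where hypotheses (ii) and (iii) of Lemma~\ref{eq:choosing-a-correctly} are both genuinely needed — is to show that \( \varphi_{\mathfrak{a}} \) is \emph{linear}. It suffices that \( U\mapsto\ad_{\pp{s}}(U) \) be additive on \( \mathfrak{a} \): then linearity of \( U\mapsto\ad_{\pp{si}}(U) \) on \( \mathfrak{a} \) follows by diagonalising the family \( \{\ad_{\pp{s}}(W):W\in\mathfrak{a}\} \), which commutes by (iii), in one fixed basis and reading off \( \ad_{\pp{si}}(W) \) as the ``imaginary part'' of \( \ad_{\pp{s}}(W) \), an \( \mathbb{R} \)-linear operation; composing with the linear \( \pi_{\mathfrak{a}} \) gives linearity of \( \varphi_{\mathfrak{a}} \). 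For additivity, fix \( U,V\in\mathfrak{a} \). By (ii), \( \ad_{\pp{s}}(V)(U)=0 \), hence \( [\ad_{\pp{s}}(V),\ad_U]=\ad_{\ad_{\pp{s}}(V)(U)}=0 \), so \( \ad_{\pp{s}}(V) \) commutes with \( \ad_U \) and therefore with \( \ad_{\pp{n}}(U)=\ad_U-\ad_{\pp{s}}(U) \). Writing \( A=\ad_{\pp{s}}(U)+\ad_{\pp{s}}(V) \) and \( B=\ad_{\pp{n}}(U)+\ad_{\pp{n}}(V) \), all four terms in the expansion of \( [A,B] \) vanish (two by the preceding remark, two because \( \ad_{\pp{s}} \) and \( \ad_{\pp{n}} \) of the same element commute), so \( [A,B]=0 \); here \( A \) is semisimple, being a sum of commuting semisimple operators, and \( B \) is nilpotent because the Lie subalgebra of \( \mathfrak{gl}(\mathfrak{g}) \) generated by \( \ad_{\pp{n}}(U),\ad_{\pp{n}}(V) \) is contained in \( \mathbb{R}\ad_{\pp{n}}(U)+\mathbb{R}\ad_{\pp{n}}(V)+\ad(\mathfrak{n}) \) — a solvable algebra, \( \ad(\mathfrak{n}) \) being a nilpotent ideal there with abelian quotient — so by Lie's theorem it is triangularisable over \( \mathbb{C} \) together with its nilpotent generators, whence \( B \) has zero diagonal. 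Thus \( \ad_{U+V}=\ad_U+\ad_V=A+B \) is the Jordan decomposition of \( \ad_{U+V} \), giving \( \ad_{\pp{s}}(U+V)=A=\ad_{\pp{s}}(U)+\ad_{\pp{s}}(V) \); with the obvious homogeneity, \( \ad_{\pp{s}} \) is linear on \( \mathfrak{a} \).

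Granting linearity, the rest of (i)--(iii) is organised bookkeeping. The image of \( \varphi_{\mathfrak{a}} \) is abelian, since \( [\ad_{\pp{si}}(U),\ad_{\pp{si}}(V)]=0 \) follows from (iii) (the \( \ad_{\pp{si}} \)'s are polynomials in the \( \ad_{\pp{s}} \)'s); and \( \varphi_{\mathfrak{a}} \) is a homomorphism because \( [X,Y]\in\mathfrak{n} \) forces \( \pi_{\mathfrak{a}}([X,Y])=0 \), so \( \varphi_{\mathfrak{a}}([X,Y])=0=[\varphi_{\mathfrak{a}}(X),\varphi_{\mathfrak{a}}(Y)] \). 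For (ii)--(iii), a direct computation with the semidirect-product bracket and the abelian image gives \( [(X,\varphi_{\mathfrak{a}}(X)),(Y,\varphi_{\mathfrak{a}}(Y))]=([X,Y]_{\realshadow},0) \); since every term of \( [X,Y]_{\realshadow} \) lies in \( \mathfrak{n} \), where \( \varphi_{\mathfrak{a}} \) vanishes, \( \varphi_{\mathfrak{a}}([X,Y]_{\realshadow})=0 \), and this shows at once that \( \pp{Gr}(\varphi_{\mathfrak{a}}) \) is a subalgebra and that \( X\mapsto(X,\varphi_{\mathfrak{a}}(X)) \) is a Lie algebra isomorphism of \( (\mathfrak{g},[\cdot,\cdot]_{\realshadow}) \) onto it (in particular \( [\cdot,\cdot]_{\realshadow} \) satisfies Jacobi, being the pullback of a genuine bracket). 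Since \( [\cdot,\cdot]_{\realshadow} \) equals \( [\cdot,\cdot] \) on \( \mathfrak{n} \) and maps into \( \mathfrak{n} \), the derived algebra of \( (\mathfrak{g},[\cdot,\cdot]_{\realshadow}) \) lies inside the nilpotent \( (\mathfrak{n},[\cdot,\cdot]) \), so it is solvable with \( \mathfrak{n} \) an ideal; and it is completely solvable because for \( X=U+Z \) with \( U=\pi_{\mathfrak{a}}(X)\in\mathfrak{a} \) and \( Z\in\mathfrak{n} \), one has \( \ad^{\realshadow}_X=0 \) on \( \mathfrak{g}/\mathfrak{n} \) and \( \ad^{\realshadow}_X|_{\mathfrak{n}}=(\ad_U-\ad_{\pp{si}}(U)+\ad_Z)|_{\mathfrak{n}} \), whose eigenvalues coincide, by Lie's theorem applied to the solvable operator algebra \( \ad^{\realshadow}(\mathfrak{g})|_{\mathfrak{n}} \), with those of \( (\ad_U-\ad_{\pp{si}}(U))|_{\mathfrak{n}} \) — the nilpotent \( \ad_Z|_{\mathfrak{n}} \) contributing \( 0 \) to every weight — and \( \ad_U-\ad_{\pp{si}}(U)=\ad_{\pp{sr}}(U)+\ad_{\pp{n}}(U) \) has real spectrum.

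For the final clause I would argue that replacing \( \mathfrak{a} \) by another subspace \( \mathfrak{a}' \) as in Lemma~\ref{eq:choosing-a-correctly} does not change the isomorphism type of \( \pp{Gr}(\varphi_{\mathfrak{a}}) \): realising \( \mathfrak{a},\mathfrak{a}' \) inside Cartan subalgebras of \( \mathfrak{g} \) and invoking the conjugacy of Cartan subalgebras of a solvable Lie algebra under inner automorphisms, one transports \( \mathfrak{a} \) to \( \mathfrak{a}' \) and carries the construction along to obtain a Lie algebra isomorphism \( (\mathfrak{g},[\cdot,\cdot]_{\realshadow,\mathfrak{a}})\to(\mathfrak{g},[\cdot,\cdot]_{\realshadow,\mathfrak{a}'}) \), hence between the two graphs; the bookkeeping here is that recorded inside the proof of Theorem~4.16 in \cite{avain:CKLNO}. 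This last step, together with the two uses of Lie's theorem above, is where the argument leans hardest on outside input; but the genuinely delicate point internal to the proof is the linearity of \( \varphi_{\mathfrak{a}} \) established in the second paragraph — everything after that is routine computation with the semidirect-product bracket.
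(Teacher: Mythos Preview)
The paper does not actually prove this proposition: it is presented as a recollection of the construction from \cite{avain:CKLNO}, with the remark preceding Lemma~\ref{eq:choosing-a-correctly} that ``the arguments may be found inside the proof of Theorem~4.16 in \cite{avain:CKLNO}''; no further justification is given in the text. Your proposal is therefore strictly more than what the paper offers, and it is correct. The genuinely nontrivial step --- linearity of \( U\mapsto\ad_{\pp s}(U) \) on \( \mathfrak a \) via hypotheses (ii)--(iii) of Lemma~\ref{eq:choosing-a-correctly} and uniqueness of the Jordan decomposition --- is handled cleanly; your nilpotency argument for \( B=\ad_{\pp n}(U)+\ad_{\pp n}(V) \) via simultaneous triangularisation of the solvable algebra \( \mathbb R\ad_{\pp n}(U)+\mathbb R\ad_{\pp n}(V)+\ad(\mathfrak n) \) is valid (one checks \( [\ad_{\pp n}(U),\ad_{\pp n}(V)]=\ad_{[U,V]}\in\ad(\mathfrak n) \) exactly from the vanishing of the cross terms you already established). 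The passage from linearity of \( \ad_{\pp s} \) to linearity of \( \ad_{\pp{si}} \) works because \( \ad_{\pp{si}}(W) \) is a real polynomial in \( \ad_{\pp s}(W) \), hence diagonal in any common eigenbasis for the commuting family, with entries \( i\operatorname{Im}\lambda_j(W) \). The verification of (ii)--(iii) and of complete solvability is routine, and for the independence of \( \mathfrak a \) you, like the paper, defer to \cite{avain:CKLNO}. So your write-up is a self-contained expansion of what the paper leaves as a citation.
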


\begin{maar}
	Let \( \mathfrak{g} \) be a solvable Lie algebra. Its \emph{real-shadow} is the Lie algebra \( \pp{Gr}(\varphi_\mathfrak{a}) \) constructed as in Proposition \ref{prop:construction_of_the_real_shadow}.
\end{maar}

The main result of \cite{avain:CKLNO} regarding this construction is that Fact \ref{thm:graaf-CKLNO} indeed holds for such a construction.

\begin{huom} \label{rmk:nilshadow_is_real}
	In many applications of low dimension, there is an Abelian subalgebra \( \mathfrak{a} \) complementary to the nilradical \( \mathfrak{n} \) of \( \mathfrak{g} \). Then such \( \mathfrak{a} \) trivially satisfies Lemma \ref{eq:choosing-a-correctly} and can be used to construct the real-shadow. Another remark is that if \( \mathfrak{g} \) is of type (R), then \( \ad_\pp{si}(X) = \ad_\pp{s}(X) \) for any \( X \in \mathfrak{g} \), and consequently, the real-shadow of a Lie algebra of type (R) is its nilshadow as defined in \cite{avain:DER03}.
\end{huom}

\subsection{Algebraic tools for quasi-isometries}

\label{sec:algebraic_tools_for_QI}

When considering the class of simply connected solvable Lie groups, the algebraic tools relevant for our study of groups of dimension 4 and 5 up to quasi-isometry are the following invariants:
\begin{enumerate}[label=(Inv-\arabic*)]
	\item \label{inv:A} Carnot groups are quasi-isometrically distinct among themselves by Pansu's Theorem \cite{pansu}. More generally, \cite{pansu} implies that if two simply connected nilpotent Lie groups are quasi-isometric, their associated Carnot groups are isomorphic.
	\item \label{inv:B} For nilpotent groups, the Betti numbers (by \cite{Shalom}) and more generally the Lie algebra cohomology rings (by \cite{Sauer2006}) are quasi-isometry invariants.
	\item \label{inv:C} For the groups of polynomial growth, their degree of growth is quasi-isometry invariant. It is because this degree is the Hausdorff dimension of the asymptotic cone.
	\item \label{inv:D} Two simply connected solvable groups are quasi-isometric if and only if their real-shadows are quasi-isometric. This is because these groups are quasi-isometric to their real-shadows by Fact \ref{thm:graaf-CKLNO}.
	\item \label{inv:E} Topological dimension of the asymptotic cone, called \emph{cone dimension}, is a quasi-isometry invariant.
\end{enumerate}
As nilshadows were already treated above, we turn the attention here to the cone dimension. Cornulier proved in \cite{MR2399134} that the cone dimension of a simply connected solvable Lie group agrees with the dimension of the exponential radical of the group. 
We turn this result into the following observation.

\begin{prop} \label{prop:cone-dimensions-algebraically}
	Let \( G \) be a simply connected completely solvable Lie group with Lie algebra \( \mathfrak{g} \). Then the cone dimension of \( G \) equals the codimension of the subspace \( \bigcap_{n \ge 1} \mathfrak{g}^{n} \) of \( \mathfrak{g} \), where the subspace \( \mathfrak{g}^n \) denotes the \(n \)th term in the lower central series of \nolinebreak \( \mathfrak{g} \).
\end{prop}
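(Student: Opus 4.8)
The plan is to combine Cornulier's identification of the cone dimension with the dimension of the exponential radical with an explicit description of the exponential radical for completely solvable Lie algebras. Recall that for a completely solvable Lie algebra $\mathfrak{g}$ the exponential radical $\mathfrak{R}_{\exp}(\mathfrak{g})$ is well understood: it is the smallest ideal $\mathfrak{i}$ such that $\mathfrak{g}/\mathfrak{i}$ has polynomial growth, equivalently (in the completely solvable case, where type (R) means nilpotent) such that $\mathfrak{g}/\mathfrak{i}$ is nilpotent. So the first step is to translate ``cone dimension of $G$'' into ``codimension of the exponential radical of $\mathfrak{g}$'', citing \cite{MR2399134}, and then to reduce the claim to the algebraic identity $\mathfrak{R}_{\exp}(\mathfrak{g}) = \bigcap_{n\ge 1}\mathfrak{g}^n$, where $\mathfrak{g}^n$ is the lower central series.

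Next I would prove this identity in two inclusions. For $\bigcap_n \mathfrak{g}^n \subseteq \mathfrak{R}_{\exp}(\mathfrak{g})$: the quotient $\mathfrak{q} := \mathfrak{g}/\mathfrak{R}_{\exp}(\mathfrak{g})$ is nilpotent, so its lower central series terminates, $\mathfrak{q}^{n}=0$ for $n$ large; since the projection $\mathfrak{g}\to\mathfrak{q}$ sends $\mathfrak{g}^n$ onto $\mathfrak{q}^n$, we get $\mathfrak{g}^n \subseteq \mathfrak{R}_{\exp}(\mathfrak{g})$ for large $n$, hence $\bigcap_n\mathfrak{g}^n\subseteq\mathfrak{R}_{\exp}(\mathfrak{g})$. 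For the reverse inclusion: set $\mathfrak{g}^\infty := \bigcap_n \mathfrak{g}^n$; this is an ideal (each $\mathfrak{g}^n$ is characteristic), and one checks the descending chain $\mathfrak{g}\supseteq\mathfrak{g}^2\supseteq\cdots$ stabilizes — in finite dimension it must — so $\mathfrak{g}^\infty = \mathfrak{g}^m$ for some $m$, and then $[\mathfrak{g},\mathfrak{g}^\infty] = \mathfrak{g}^{m+1} = \mathfrak{g}^\infty$. The quotient $\mathfrak{g}/\mathfrak{g}^\infty$ has lower central series terminating at zero, hence is nilpotent, so by minimality of the exponential radical $\mathfrak{R}_{\exp}(\mathfrak{g})\subseteq\mathfrak{g}^\infty$. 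Combining the two inclusions gives the identity, and taking codimensions finishes the proof.

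The step that needs the most care — and where I expect the main subtlety to lie — is the characterization of the exponential radical in the completely solvable case, i.e., that it is exactly the smallest ideal with nilpotent (rather than merely polynomial-growth, i.e.\ type (R)) quotient. For general solvable $\mathfrak{g}$ the exponential radical is the smallest ideal with type-(R) quotient, but for completely solvable $\mathfrak{g}$ every quotient is again completely solvable, and a completely solvable Lie algebra of type (R) is nilpotent; so the two notions coincide here. I would either cite this from \cite{MR2399134} (or \cite{Cornulier:qihlc}) or include the short argument: type (R) forces all $\operatorname{ad}_X$ to have purely imaginary spectrum, complete solvability forces them to have real spectrum, hence all $\operatorname{ad}_X$ are nilpotent, so $\mathfrak{g}$ is nilpotent by Engel's theorem. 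One should also remark that $\bigcap_n\mathfrak{g}^n$ being an ideal (and characteristic) is what makes the quotient well-defined; this is routine but worth a phrase. Everything else is standard manipulation of the lower central series in finite dimension.
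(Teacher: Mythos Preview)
Your proposal is correct and follows essentially the same route as the paper: invoke Cornulier's identification of the cone dimension with the codimension of the exponential radical, use that for completely solvable $\mathfrak{g}$ a quotient of type (R) is necessarily nilpotent (so the exponential radical is the smallest ideal with nilpotent quotient), and then identify that ideal with $\mathfrak{g}^\infty=\bigcap_{n\ge1}\mathfrak{g}^n$ using that the lower central series stabilises in finite dimension. The only stylistic difference is that the paper argues by contradiction that any ideal $\mathfrak{q}$ with nilpotent quotient contains $\mathfrak{g}^N$, whereas you phrase the same step as the clean two-inclusion proof $\mathfrak{R}_{\exp}(\mathfrak{g})=\mathfrak{g}^\infty$; your formulation is arguably tidier but the content is identical.
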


\begin{proof}
	By \cite{avain:osin-exp} (see also \cite[Theorem 6.1]{MR2399134}), the exponential radical  \(R\) of \( G \) is a closed connected normal subgroup of \( G \), the quotient group \( G/R \) has polynomial growth, and there is no closed connected normal subgroup \( R' \) so that the quotient \( G/R' \) would be of polynomial growth and have strictly larger dimension.
	
	By \cite[Theorem 1.1]{MR2399134}, the cone dimension of \( G \), denoted by \( \conedim(G) \), equals to the codimension of the exponential radical of \( G\). By the above
	\begin{equation*}
	\conedim(G) = \max \{\dim (\mathfrak{g}/\mathfrak{r} ) : \mathfrak{r} \text{ ideal of } \mathfrak{g} \text{ and } \mathfrak{g}/\mathfrak{r} \text{ is of type (R)} \}
	\end{equation*}
	Moreover, since \( G \) is completely solvable, a quotient of \( \mathfrak{g} \) is of type (R) if and only if it is nilpotent.
	
	The terms of the lower central series are nested vector subspaces of \( \mathfrak{g} \), and the condition \( \mathfrak{g}^n=\mathfrak{g}^{n+1} \) for some \(n\) implies that \( \mathfrak{g}^n = \mathfrak{g}^k \) for all \( k \ge n \). Thus there is \( N \in \mathbb{N} \) so that \( \bigcap_{n \ge 1} \mathfrak{g}^{n} = \mathfrak{g}^N \). The quotient \( \mathfrak{g}/\mathfrak{g}^N \) is nilpotent, and we will show its dimension is maximal.
	Let \( \mathfrak{q} \) be an ideal of \( \mathfrak{g} \) so that \( \mathfrak{g}/\mathfrak{q} \) is nilpotent of step \( s \). It is enough to show \( \mathfrak{g}^N \subset \mathfrak{q} \). Assuming the contrary, we have a non-zero vector \( X \in \mathfrak{g}^N \pois \mathfrak{q} \). Because \( \mathfrak{g}^N = \mathfrak{g}^k \) for all \( k \ge N \), we may express \( X \) as a bracket of arbitrary length. More precisely \( X \)  may be expressed as a linear combination of a terms of the form
	\( \ad_{X_1} \circ \cdots \circ  \ad_{X_s}(X_{s+1}) \) for some \( X_i \in \mathfrak{g} \). It holds \( X_i \not\in \mathfrak{q} \) since \( \mathfrak{q} \) is an ideal. Hence when \( X \) is considered as a non-zero element of the quotient \( \mathfrak{g} / \mathfrak{q} \), it can be expressed as a bracket of length \(s+1\), which contradicts the nilpotency step of the quotient.
\end{proof} 
The above result implies that the cone dimension can be algorithmically calculated at the Lie algebra level.

\section{On biLipschitz maps of homogeneous groups}

\label{sec:lemma_of_Seba}

It is conjectured that if two purely real Heintze groups are quasi-isometric, then they are isomorphic.
Many quasi-isometry invariants are known, 
but still there are non-isomorphic pairs of  purely real Heintze groups that are not distinguished by those invariants.
In this section we present new quasi-isometry invariants for purely real Heintze groups: we prove Theorem \ref{thm:computable-reachability-sets2}. 
Our analysis is based on the fact that two purely real Heintze groups are quasi-isometric if and only if their parabolic boundaries are biLipschitz equivalent, see Proposition \ref{go-to-the-boundary}.

In Section \ref{sec:abelian-theorem-new} we prove Theorem~\ref{thm:computable-reachability-sets2}.(i) stating that biLipschitz equivalent homogenous groups are quasi-isometric when equipped with Riemannian distances.
One important consequence, see also Theorem~\ref{lause:abelianity_preserves_with_Seba} in appendix, is that the family of purely real Heintze groups with Abelian nilradical is closed under quasi-isometries among the family of purely real Heintze groups. In addition, the quasi-isometry relations within the family of purely real Heintze groups with Abelian nilradical are completely understood by the results of Xie \cite{MR3180486}.

In Section \ref{sec:reachability}, we prove that on the level of the boundary, the set of points reachable by curves of a given Hausdorff dimension can be algebraically computed and hence used as an invariant. This leads to Theorem \ref{thm:computable-reachability-sets2}.(ii). Such a result will enable us to distinguish up to quasi-isometry some examples of low dimension that we discuss in Section \ref{sec:examples_heintze}.

We recall that in this paper, we will always use the convention that if \( N,H,G, \ldots \) are Lie groups, then the fraktur letters \( \mathfrak{n}, \mathfrak{h}, \mathfrak{g}, \ldots \) denote their Lie algebras, and vice versa.

\subsection{Homogeneous biLipschitz implies Riemannian quasi-isometric}
\label{sec:abelian-theorem-new}

In this section we prove Theorem~\ref{thm:computable-reachability-sets2}.(i). We follow a suggestion of Pansu for treating arbitrary homogeneous groups.
However, in Section~\ref{sec:abelian-theorem}, we consider the case where one of the groups is Abelian. 
It is a less general setting, but the proof is direct and might be of independent interest.

\begin{proof}[Proof of Theorem~\ref{thm:computable-reachability-sets2}.(i)]
		Given a metric space \( (M,d) \) and \( \ell > 0 \), we recall from \cite{avain:CKLNO} the definition of derived semi-intrinsic metric with parameter \( \ell \) as
	\begin{equation*}
	d_{[\ell]}(p,q) = \inf \Bigl\{ \sum_{j=1}^k d(x_j,x_{j-1}) \mid x_0,\ldots,x_k \in M,\: x_0=p,\: x_k=q,\: d(x_j,x_{j-1}) \le \ell \Bigr\} \piste
	\end{equation*}
	It follows immediately from the definition, that if a map \( f \colon (M,d) \to (M',d') \) is an \(L\)-Lipschitz-map with \( L \ge 1 \), then \( d'_{[\ell]}(f(p),f(q)) \le L d_{[\ell/L]}(p,q) \).
	By \cite[Lemma 2.3]{avain:CKLNO}, for a homogeneous metric group \( (N,d) \) and \( \ell > 0 \), the function \( d_{[\ell]} \) is a proper quasi-geodesic distance function inducing the topology of \( N \). Thus, \( d_{[\ell]} \) is quasi-isometric to any left-invariant Riemannian distance on \(N\).
	
	We conclude that if \( F \colon N_1 \to N_2 \) is an \(L\)-biLipschitz map between homogeneous metric groups \( (N_1,\alpha,d^\alpha) \) and \( (N_2,\beta,d^\beta) \), then
	 the derived semi-intrinsic metrics satisfy the following inequalities:
	\begin{equation*}
	\frac{1}{L} d_{[L \ell]}^\alpha (x,y) \le d_{[\ell]}^\beta (F(x),F(y)) \le L d_{[\ell/L]}^\alpha (x,y) \piste
	\end{equation*}
Therefore, if \( D_1\) and \( D_2\) are left-invariant Riemannian distances, then the map \linebreak \( F \colon (N_1,D_1) \to (N_2,D_2) \) is a quasi-isometry.
\end{proof}

\subsection{Reachability sets}
\label{sec:reachability}

Considering homogeneous groups up to biLipschitz equi\-va\-len\-ce, one obvious invariant is the set of those points that can be reached by curves starting from the identity element and having Hausdorff dimension at most \(s \), for some fixed \( s \ge 1 \) (notice that curves have Hausdorff dimension at least 1). When \( (N,\alpha) \) is a homogeneous group, we denote
\begin{equation*}
R(s) 
= \{ \gamma(1) \mid \gamma \in \mathcal{C}^0([0,1],N), \: \gamma(0) = 1_N, \:\Hdim(\gamma([0,1])) \le s \}
\end{equation*}
As one might expect and as we shall now prove, such a set may be computed as the subgroup \( \reach{N}{\alpha}{s} < N \)  corresponding to the subalgebra \( \pp{LieSpan}(\bigoplus_{0 < \lambda \le s } V_\lambda) \). Here \( \mathfrak{n} = \bigoplus_{\lambda >0 } V_\lambda \) is the decomposition of the Lie algebra by the generalised eigenspaces of the derivation~\( \alpha \). 
The fact that \( R^\alpha(s) = \reach{N}{\alpha}{s} \) makes this set into a practically usable invariant.

\begin{lause} \label{thm:computable-reachability-sets}
	Let \( (N,\alpha) \) be a purely real homogeneous group. Then \( R(s) = \reach{N}{\alpha}{s} \) for every \( s \ge 1 \).
\end{lause}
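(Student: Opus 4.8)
The plan is to prove the two inclusions $R(s) \supseteq \reach{N}{\alpha}{s}$ and $R(s) \subseteq \reach{N}{\alpha}{s}$ separately, both by exploiting the interplay between the grading by generalised eigenspaces and Proposition \ref{prop:CPS-H-dim-lemma}, which controls the Hausdorff dimension of curves in terms of the smallest eigenvalue.

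For the easier inclusion $\reach{N}{\alpha}{s} \subseteq R(s)$: I would observe that the subgroup $\reach{N}{\alpha}{s}$ is itself a homogeneous group, being the simply connected nilpotent group with Lie algebra $\mathfrak{h}_s := \pp{LieSpan}(\bigoplus_{0<\lambda\le s} V_\lambda)$, which is $\alpha$-invariant; the restriction $\alpha|_{\mathfrak{h}_s}$ has smallest eigenvalue $\lambda_1$ (still $\ge 1$) and largest eigenvalue at most $s$. Any point $p \in \reach{N}{\alpha}{s}$ can be written as a product of exponentials of eigenvectors, and concatenating the corresponding one-parameter curves $t \mapsto \exp(tX_i)$ yields a curve from $1_N$ to $p$ inside $\reach{N}{\alpha}{s}$; by the homogeneity estimates (the ``ball-box'' type bound that comes from $\rho(\delta_\lambda x,\delta_\lambda y)=\lambda\rho(x,y)$, or more directly by Proposition \ref{prop:CPS-H-dim-lemma} applied coordinate-wise together with the fact that on $\reach{N}{\alpha}{s}$ no eigenvalue exceeds $s$) such a curve has Hausdorff dimension at most $s$. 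Actually the cleanest route is: restrict attention to $\reach{N}{\alpha}{s}$ with its homogeneous distance $\hat\rho$ from Lemma \ref{huom:preserved-subalgebra-good-quotient}; since all its eigenvalues lie in $[\lambda_1,s]$, the identity-to-$p$ geodesic for $\hat\rho$ has Hausdorff dimension $\le s$ by the standard upper bound for homogeneous distances whose dilation exponents are bounded by $s$ (a Hölder-continuity estimate of exponent $1/s$ between the Riemannian and the homogeneous metric). This gives $p \in R(s)$.

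For the harder inclusion $R(s) \subseteq \reach{N}{\alpha}{s}$: suppose $\gamma\colon[0,1]\to N$ with $\gamma(0)=1_N$ and $\Hdim(\gamma([0,1]))\le s$; I must show $\gamma(1)\in\reach{N}{\alpha}{s}$. The idea is to pass to the quotient. The subalgebra $\mathfrak{h}_s$ is an ideal of $\mathfrak{n}$ (it contains all low eigenvalues and $[\mathfrak{n},\mathfrak{n}]$ is spanned by brackets whose eigenvalues are sums, hence $\ge$ the ideal generated by the lowest ones — one needs here that $\mathfrak{h}_s$ is genuinely a Lie ideal, which follows because for $X\in V_\mu$, $Y\in V_\lambda$ with $\lambda\le s$ one has $[X,Y]\in V_{\mu+\lambda}$... this is \emph{not} automatically in $\mathfrak{h}_s$, so I must instead argue that $\mathfrak{h}_s$ is an ideal by a separate argument, e.g.\ that the sum of eigenvalues appearing in a bracket with one factor of eigenvalue $\le s$ need not be $\le s$; the correct statement is that $\reach{N}{\alpha}{s}$ is normal because $\bigoplus_{\lambda > s} V_\lambda$ is a subalgebra complemented appropriately — I would verify normality directly from the grading). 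Granting normality, form the homogeneous quotient $(\bar N,\bar\alpha) = (N/\reach{N}{\alpha}{s}, \hat\alpha)$ via Lemma \ref{huom:preserved-subalgebra-good-quotient}; its Lie algebra is $\bigoplus_{\lambda>s}\bar V_\lambda$, so the smallest eigenvalue of $\bar\alpha$ is strictly greater than $s$. The projection $\pi\colon N\to\bar N$ is $1$-Lipschitz for the homogeneous distances, hence does not increase Hausdorff dimension; so $\pi\circ\gamma$ is a curve in $\bar N$ of Hausdorff dimension $\le s$. But by Proposition \ref{prop:CPS-H-dim-lemma} every non-constant curve in $\bar N$ has Hausdorff dimension at least the smallest eigenvalue of $\bar\alpha$, which exceeds $s$. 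Therefore $\pi\circ\gamma$ is constant, i.e.\ $\gamma(1)\in\reach{N}{\alpha}{s}$, as desired.

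\textbf{Main obstacle.} The crux is establishing that $\reach{N}{\alpha}{s}$ is a normal subgroup of $N$ (equivalently that $\mathfrak{h}_s$ is an ideal), since the generated-subalgebra definition mixes eigenvalues additively and it is not a priori clear that brackets stay within the range $(0,s]$. I would resolve this by arguing that $\mathfrak{h}_s = \bigoplus_{0<\lambda\le s} V_\lambda$ already as a vector space when $\alpha$ has real spectrum and by using that the complementary sum $\bigoplus_{\lambda>s} V_\lambda$ is a subalgebra (sums of eigenvalues $>s$ are $>s$), together with $[\mathfrak{n},\mathfrak{n}]$-considerations, to conclude $[\mathfrak{n},\mathfrak{h}_s]\subseteq\mathfrak{h}_s$ — or, failing a clean eigenvalue argument, invoke that $\reach{N}{\alpha}{s}$ coincides with $R(s)$ which is manifestly invariant under all automorphisms commuting with the dilations, hence normal; but that would be circular, so the eigenvalue bookkeeping must be done honestly. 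A secondary technical point is the precise upper Hausdorff-dimension bound used in the first inclusion, which requires a Hölder comparison between a homogeneous distance with dilation exponents in $[\lambda_1,s]$ and a Riemannian distance; this is standard for homogeneous (quasi-)distances and I would cite \cite{seba-enrico-dilations} or argue it via the ball-box estimate.
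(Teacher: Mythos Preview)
Your overall architecture is right, and the first inclusion is essentially the paper's argument (write a point of $\reach{N}{\alpha}{s}$ as a product of exponentials of vectors in the low eigenspaces and bound the Hausdorff dimension of each one-parameter piece). The paper makes the product-of-exponentials step precise via the Orbit Theorem rather than assuming it, and reduces to genuine eigenvectors using \cite[Lemma~5.1]{avain:CPS} before computing the dimension of a one-parameter subgroup by homogeneity; your H\"older/ball-box route would also work.

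The second inclusion, however, has a real gap exactly where you flagged it: $\reach{N}{\alpha}{s}$ is \emph{not} normal in $N$ in general, so the single-quotient argument cannot be carried out. A concrete counterexample is the Heisenberg algebra with basis $X_1,X_2,X_3$, $[X_1,X_2]=X_3$, and $\alpha=\diag(1,2,3)$: for $s=1$ one has $\bigoplus_{\lambda\le 1}V_\lambda=\mathbb{R} X_1$, which is already a subalgebra, so $\mathfrak h_1=\mathbb{R} X_1$; but $[X_2,X_1]=-X_3\notin\mathfrak h_1$, so $\mathfrak h_1$ is not an ideal. Your two proposed fixes both fail here: $\bigoplus_{\lambda>s}V_\lambda$ being a subalgebra says nothing about the complement being an ideal, and the equality $\mathfrak h_s=\bigoplus_{\lambda\le s}V_\lambda$ as vector spaces is also false in general (same algebra, $s=2$: the LieSpan picks up $X_3$).

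The paper's way around this is the missing idea: instead of quotienting by $H_0=\reach{N}{\alpha}{s}$ directly, build the tower of iterated normalisers $H_0\lhd H_1\lhd\cdots\lhd H_m=N$, which terminates because a proper subalgebra of a nilpotent Lie algebra is never self-normalising. Each $\mathfrak h_k$ is $\alpha$-invariant (normalisers of $\alpha$-invariant subalgebras are $\alpha$-invariant), so $H_k/H_{k-1}$ is a homogeneous quotient by Lemma~\ref{huom:preserved-subalgebra-good-quotient}, and since all generalised eigenspaces with eigenvalue $\le s$ already sit inside $H_0\subset H_{k-1}$, every eigenvalue on the quotient exceeds $s$. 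Then a downward induction on $k$, using exactly your projection-plus-Proposition~\ref{prop:CPS-H-dim-lemma} step at each stage, traps $\gamma$ in $H_0$.
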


\begin{proof}
	Fix \( s \ge 1 \). Using the Orbit Theorem, one may show (see \cite[Proposition~2.26]{bellettini}) the following. Suppose \( W \) is subset of a Lie algebra \( \mathfrak{g} \) so that \( W \) is invariant under scalar multiplication, i.e., \( \mathbb{R} W = W \), and so that no proper subalgebra of \( \mathfrak{g} \) contains \( W \). Then \( \bigcup_{k=1}^\infty (\exp(W))^k \) has non-empty interior in \( G \), and since it is also a subgroup it holds \( \bigcup_{k=1}^\infty (\exp(W))^k = G \). Applying this observation to \( W= \bigcup_{0 < \lambda \le s } V_\lambda \) and \( G = \reach{N}{\alpha}{s} \), we get that every element of \( \reach{N}{\alpha}{s} \) is a finite product of exponentials of vectors \( X \in \bigcup_{0 < \lambda \le s } V_\lambda \). Thus, to show  that \( R(s) \supset \reach{N}{\alpha}{s} \), we only need to see that the flow lines \( t \mapsto \exp(tX) \)  have Hausdorff dimension at most \(s\), for \( X \in \bigcup_{0 < \lambda \le s } V_\lambda \). By \cite[Lemma 5.1]{avain:CPS}, we may assume that \( X \) is an eigenvector of \( \alpha \) with eigenvalue \( \lambda \le s \). Fix a homogeneous distance \( \rho \),
	and set \( L = \exp(\mathbb{R} X) \).
	Identifying  \( L \) with \( \mathbb{R} \), we get a distance to \( \mathbb{R} \) that is homogeneous under the family of dilations induced by \( \alpha \). Hence by Remark \ref{2017-09-12_kysymys},
	\( (L,\rho) \) is biLipschitz equivalent to \( (\mathbb{R},\norm{\cdot}^{1/\lambda}) \) and hence it has Hausdorff dimension \( \lambda \).

	To prove that \( R(s) \subset \reach{N}{\alpha}{s} \), denote 
	\( H_0 = \reach{N}{\alpha}{s} \) and let then recursively  \( H_k \) denote the normaliser of \( H_{k-1} \).
	Consider the finite chain of subgroups \( \reach{N}{\alpha}{s} = H_0 < H_1 < \cdots < H_m = N \), where \( m \ge 1 \) is the first integer so that the repeated normaliser is the full space. Since nilpotent Lie algebras don't have non-trivial self-normalising subalgebras, such \( m\) exists.
Fix a continuous curve \( \gamma \colon [0,1] \to N \) with \( \Hdim(\gamma([0,1])) \le s \) and \( \gamma(0) = 1_N \). We shall prove inductively that \( \gamma \) does not leave \( H_k \) for any \( 0 \le k \le m \).
	
	The case \( k=m \) of the induction is trivial. So we assume \( \gamma \) does not leave \( H_k \) for some \( k \le m \). Since \( H_{k-1} \) is normal in \( H_k \), we may consider the quotient \( H_k/H_{k-1} \).
	Observe that if a derivation \( \alpha \) preserves a subalgebra \( \mathfrak{q} < \mathfrak{n} \), then \( \alpha \) necessarily preserves the normaliser of \( \mathfrak{q} \).
	Therefore, since \( \alpha \) preserves \( \reach{N}{\alpha}{s} \), then, by induction and Lemma \ref{huom:preserved-subalgebra-good-quotient}, the  quotient \( H_k/H_{k-1} \) is a homogeneous group. Moreover, the
	 curve \( \gamma \) projects to the curve \( \pi \circ \gamma \) of \( H_k/H_{k-1} \), and Lemma \ref{huom:preserved-subalgebra-good-quotient} guarantees that the Hausdorff dimension of \( \pi(\gamma([0,1])) \) is at most the Hausdorff dimension of \( \gamma([0,1]) \), so at most \( s \). 
	 
Next, remark that all the generalised eigenspaces of \( \alpha \) corresponding to eigenvalues less or equal to \( s \) are contained in the Lie algebra of \( \reach{N}{\alpha}{s} \), thus they are contained in \( H_{k-1} \). This shows that all the eigenvalues of the derivation induced to \( H_k/H_{k-1} \) are strictly larger than \( s \). Therefore,
	 by Proposition \ref{prop:CPS-H-dim-lemma}, either \( \pi \circ \gamma \) is constant or the
	  Hausdorff dimension of \( \pi(\gamma([0,1])) \) is strictly larger than \(s\). Since the second case is ruled out, the curve \( \pi \circ \gamma \)  must be constant, i.e., \( \gamma([0,1]) \subset H_{k-1} \) as the induction requires. We conclude that \( \gamma \) does not leave \( H_0 =  \reach{N}{\alpha}{s} \) and hence  \( R(s) \subset \reach{N}{\alpha}{s} \). 
\end{proof}

\begin{proof}[Proof of Theorem \ref{thm:computable-reachability-sets2}.(ii)]
	As the set \( R(s) \) is metrically defined, we get Theorem~\ref{thm:computable-reachability-sets2}.(ii) as immediate corollary of Theorem \ref{thm:computable-reachability-sets} when applying also Proposition \ref{lemma:CPS-corollary-about-normalisers}.
\end{proof}

\subsection{Examples}

\label{sec:examples_heintze}

In this section we present some examples of pairs of Heintze groups
trying to distinguish them up to quasi-isometry
using the results that we proved. 
\begin{itemize}
	\item[Ex \ref{esim:pair-noCPS}] This is a pair of 7-dimensional Heintze groups with identical nilradical and derivations with identical diagonal form. Theorem \ref{thm:computable-reachability-sets2}.(ii) distinguishes them.
		\item[Ex \ref{esim:5D-heintze-sublin}] This is a pair of 5-dimensional Heintze groups with identical nilradical. This pair cannot be distinguished even with the new invariants we presented.
	\item[Ex \ref{esim:6D-heintzes-different-nilrad}] This is a pair of 7-dimensional Heintze groups with different nilradical, but identical diagonal derivation. Theorem \ref{thm:computable-reachability-sets2}.(i) distinguishes them.
	\item[Ex \ref{esim:HeistimesHeis}] 
	This is a pair of 10-dimensional Heintze groups with identical nilradical and derivations with identical diagonal form. Here the reachability sets don't distinguish the pair directly, but the normalisers can be used to distinguish them.
	\item[Ex \ref{esim:6D-heintzes-different-nilrad2}] This is a pair of 7-dimensional Heintze groups with different nilradical, but identical diagonal derivation. This pair cannot be distinguished even with the new invariants we presented.
\end{itemize}

In the next examples, we use the notation \( \pp{Heis} \) for the standard Heisenberg group
and \( \pp{Heis}(5) \) for the 5-dimensional Heisenberg group. These are indexed by \( A_{3,1} \)
and \( A_{5,4} \), respectively, in \cite{avain:Patera}, see also Section \ref{sec:4D} and Section \ref{sec:5D} later.


\begin{Esim} \label{esim:pair-noCPS}
Consider the Lie group \( N = \pp{Heis} \times \mathbb{R}^3 \) and two derivations on it
	\begin{equation*}
	\alpha = \diag(1,2,3,4,5,9) \qquad \text{and} \qquad \beta=\diag(4,5,9,1,2,3)
	\end{equation*}
	Then the Heintze groups \( N \rtimes_\alpha \mathbb{R} \) and \( N \rtimes_\beta \mathbb{R} \) are not isomorphic: if they were, then \( \alpha \) should be conjugate to \( \beta \) by an automorphism of \( \mathfrak{n} \) (see for example \cite[Proposition 4.7]{hakavuori2020gradings}). However, there is a unique linear endomorphism of \( \mathfrak{n} \) that conjugates \( \alpha \) to \( \beta\) and it is not an automorphism. 
	
	The invariant \( R(2) \) distinguishes these homogeneous groups \( (N,\alpha) \) and \( (N,\beta) \) by Theorem \ref{thm:computable-reachability-sets}, as these sets have topological dimension 3 for \( (N,\alpha) \) and  2 for \( (N,\beta) \).
\end{Esim}

\begin{Esim} \label{esim:5D-heintze-sublin}
	Consider the 4-dimensional Lie algebra \( \pp{Heis} \times \mathbb{R} \) given by a basis \( X_1,\ldots,X_4 \) with the only non-trivial bracket being \( [X_1,X_2] = X_3 \). Consider, for every parameter \( a >1 \), the two linear maps given by matrices
	\begin{equation*}
	\alpha = \matriisi{cccc}{a-1 & 0 & 0 & 0 
		\\ 0 & 1 & 0 & 0
		\\ 0 & 0 & a & 1
		\\ 0 & 0 & 0 & a
	}
	\qquad \text{and} \qquad
	\beta=
	\matriisi{cccc}{a-1 & 0 & 0 & 0 
		\\ 0 & 1 & 0 & 0
		\\ 0 & 0 & a & 0
		\\ 0 & 0 & 0 & a
	}
	\end{equation*}
	Both these maps are derivations of \( \pp{Heis} \times \mathbb{R} \) with strictly positive eigenvalues, hence they define two 5-dimensional Heintze groups \(  (\pp{Heis} \times \mathbb{R}) \rtimes_\alpha \mathbb{R} \) and \(  (\pp{Heis} \times \mathbb{R}) \rtimes_\beta \mathbb{R} \). These Heintze groups are non-isomorphic, as they are the groups \( A_{5,20}^a \) and \( A_{5,19}^{a,a} \) in the classification \cite{avain:Patera}.
	We are not aware of any method of distinguishing these Heintze groups up to quasi-isometry. We remark that while the Jordan-forms of the derivations are different, the Jordan form is not proven to be invariant in this generality.
	We also remark that these groups are sublinearly biLipschitz equivalent, by a result of Cornulier \cite[Theorem 1.2]{MR3006687}, see also \cite[Theorem 3.2]{avain:Pal20}.
	%
\end{Esim}

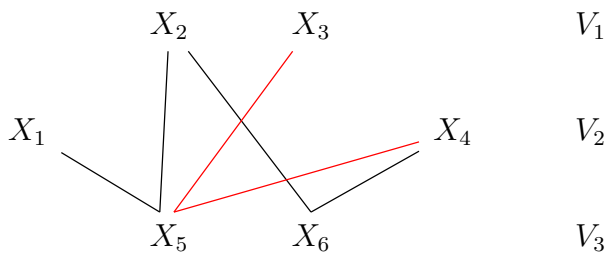
\begin{figure}[h]
	\begin{equation*}
	\begin{tikzcd}
	& X_2 \arrow[dd, dash, end anchor=110] \arrow[ddr, dash, end anchor=90] & X_3 \arrow[ddl, dash, end anchor=80, red]  & & V_1
	\\
	X_1 \arrow[dr, dash, end anchor=110] &   
	& & X_4 \arrow[dl, dash, end anchor=90] \arrow[dll, dash, end anchor=80, red] & V_2
	\\
	& X_5 & X_6 && V_3
	\end{tikzcd}
	\end{equation*}
	\caption{The graph representing schematically the bracket relations and positive gradings \(V_1 \oplus V_2 \oplus V_3\) of Example \ref{esim:6D-heintzes-different-nilrad}.}
	\label{fig:bracket-relations}
\end{figure}

\begin{Esim} \label{esim:6D-heintzes-different-nilrad}
	Consider the 6-dimensional vector space with a basis \( X_1,\ldots,X_6 \) with two different structures of Lie algebra:
	Let \( \mathfrak{n}_1 \) be the Lie algebra  given by the non-trivial bracket relations
	\begin{equation*}
	[X_1,X_2] = X_5 \qquad
	[X_2,X_4] = X_6 \qquad
	\end{equation*}
	This is denoted by \( L_{6,8} = L_{5,8} \times \mathbb{R} \) in the classification \cite{deGraaf-2007-dim_6_nilpotent_lie_algebras}.
	Let \( \mathfrak{n}_2 \) be instead given by
	\begin{equation*}
	[X_1,X_2] = X_5 \qquad
	[X_2,X_4] = X_6 \qquad
	[X_3,X_4] = X_5 
	\end{equation*}
	This is denoted by \( L_{6,22}(0) \) in the classification \cite{deGraaf-2007-dim_6_nilpotent_lie_algebras}. 
	
	The linear map \( \alpha = \diag(2,1,1,2,3,3) \) in this basis is a derivation for both of these Lie algebra structures. For a schematic presentation, see Figure \ref{fig:bracket-relations}.
	
 The homogeneous groups \((N_1,\alpha)\) and \( (N_2,\alpha) \) cannot be biLipschitz-distinguished by Theorem \ref{thm:computable-reachability-sets2}.(ii) but they can by Theorem \ref{thm:computable-reachability-sets2}.(i) since the Lie algebras in question are stratifiable (even though homogeneous structures given are not of Carnot type).
\end{Esim}

\begin{Esim} \label{esim:HeistimesHeis}
	Consider the 10-dimensional Lie algebra \( N = \pp{Heis}(5) \times \pp{Heis}(5) \) expressed as a vector space spanned by \( X_1,\ldots,X_{10} \) with the non-trivial bracket relations
	\begin{equation*}
	[X_1,X_2] = X_5 \qquad [X_3,X_4] = X_5 \qquad [X_6,X_7] = X_{10} \qquad [X_8,X_9] = X_{10}
	\end{equation*}
	Consider the two derivations given by matrices
	\begin{equation*}
	\alpha = \diag(1,7,3,5,8,2,6,4,4,8)
	\qquad \text{and} \qquad
	\beta = \diag(1,7,4,4,8,2,6,3,5,8)
	\end{equation*}
	The resulting Heintze groups \( G_\alpha = N \rtimes_\alpha \mathbb{R} \) and \( G_\beta = N \rtimes_\beta \mathbb{R} \) are not isomorphic: If they were, \( \alpha \) and \(\beta\) should be conjugate by an automorphism of \( N \) (again, \cite[Proposition 4.7]{hakavuori2020gradings}), but this is impossible. 
	Indeed, the conjugating automorphism is forced to 
	map \( X_3 \mapsto X_8 \) and \( X_4 \mapsto X_9 \)
	while in the same time keeping the basis-vectors \( X_1 \) and \(X_2\)
	fixed, which is not conceivable. 
	
	Distinguishing these spaces is a bit more involved and demonstrates the combined power of Theorem \ref{thm:computable-reachability-sets} and Lemma \ref{lemma:CPS-corollary-about-normalisers}. We cannot distinguish them directly via the reachability sets of prescribed Hausdorff dimension. However, the following works.
	
	Suppose \( F \colon (N,\alpha) \to (N,\beta) \) is a biLipschitz map. 
	Then \( F \) must map \( \reach{N}{\alpha}{6} \) to \( \reach{N}{\beta}{6} \).
	By Theorem \ref{thm:computable-reachability-sets}, \( \reach{N}{\alpha}{6} \) and \( \reach{N}{\beta}{6} \) both agree with the subgroup spanned by all the other basis vectors except \( X_2 \). This subgroup is again a homogeneous group, and it is Lie isomorphic to \( \mathbb{R} \times \pp{Heis} \times \pp{Heis}(5) \). The original biLipschitz map induces a biLipschitz map of this subgroup equipped with the two different homogeneous structures (the derivations), call these groups \( (N_0,\alpha_0) \) and \( (N_0,\beta_0) \).	
	For these two homogeneous groups, consider now the subgroups 
	\begin{equation*}
	\reach{N_0}{\alpha_0}{4}
	 =  \braket{X_1, X_3, X_6, X_8, X_9, X_{10}} \quad \text{and} \quad
\reach{N_0}{\beta_0}{4}
	 =  \braket{X_1, X_3, X_4, X_5, X_6, X_8}
	\end{equation*}
	These are both isomorphic to the group \( \mathbb{R}^3 \times \pp{Heis} \), so we did not yet distinguish the groups. However, the normalisers of these subgroups inside \( (N_0,\alpha_0) \) and \( (N_0,\beta_0) \) are preserved by Lemma \ref{lemma:CPS-corollary-about-normalisers}. These normalisers are
	\begin{equation*}
	\mathcal{N}(\reach{N_0}{\alpha_0}{4})  =  \reach{N_0}{\alpha_0}{4} \oplus \braket{X_5, X_7} \quad \text{and} \quad
	\mathcal{N}(\reach{N_0}{\beta_0}{4})  =  \reach{N_0}{\beta_0}{4} \oplus \braket{X_{10}}
	\end{equation*}
	which have different topological dimension and this prevents the existence of a \linebreak biLipschitz map.
\end{Esim}

\begin{figure}[h]
	\begin{equation*}
	\begin{tikzcd}
	& X_1 \arrow[dd, dash, end anchor=110] \arrow[drr, dash, end anchor=140, start anchor=330]
	\arrow[ddr, dash, end anchor=90, start anchor=300]
	& X_2 \arrow[dr, dash, end anchor=140, start anchor=320] 
	\arrow[dd, dash, end anchor=120, red]
	& & V_1
	\\
	X_4 \arrow[dr, dash, end anchor=110] \arrow[drr, dash, end anchor=120, red] &  & & X_3
	\arrow[dl, dash, end anchor=90, start anchor=210]  & V_2
	\\
	& X_6 & X_5 && V_3
	\end{tikzcd}
	\end{equation*}
	\caption{The graph representing schematically the bracket relations and positive gradings \(V_1 \oplus V_2 \oplus V_3\) of Example \ref{esim:6D-heintzes-different-nilrad2}.}
	\label{fig:bracket-relations2}
\end{figure}
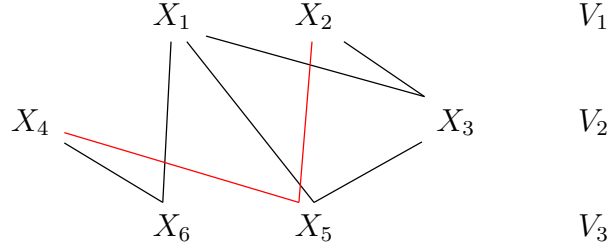

\begin{Esim} \label{esim:6D-heintzes-different-nilrad2}
	Consider the 6-dimensional vector space with a basis \( X_1,\ldots,X_6 \) with two different structures of Lie algebra:
	Let \( \mathfrak{n}_1 \) be the Lie algebra  given by the non-trivial bracket relations
	\begin{equation*}
	[X_1,X_2] = X_3 \qquad
	[X_1,X_3] = X_5 \qquad
	[X_1,X_4] = X_6 \qquad
	[X_2,X_4] = X_5 \piste
	\end{equation*}
	This is denoted by \( L_{6,23} \) in the classification \cite{deGraaf-2007-dim_6_nilpotent_lie_algebras}.
	Let \( \mathfrak{n}_2 \) be instead given by only the first three from above, i.e.,
	\begin{equation*}
	[X_1,X_2] = X_3 \qquad
	[X_1,X_3] = X_5 \qquad
	[X_1,X_4] = X_6 \piste
	\end{equation*}
	This is denoted by \( L_{6,25} \) in the classification \cite{deGraaf-2007-dim_6_nilpotent_lie_algebras}.

	The linear map \( \alpha = \diag(1,1,2,2,3,3) \) in this basis is a derivation for both of these Lie algebra structures.
	For a schematic presentation, see Figure \ref{fig:bracket-relations2}. 
	
	The homogeneous groups \((N_1,\alpha)\) and \( (N_2,\alpha) \) cannot be biLipschitz-distinguished by any method we know:
	The Lie algebra \( L_{6,25} \) is the associated Carnot algebra of \( L_{6,23} \) and the simply connected nilpotent Lie groups associated are not distinguished by the known quasi-isometric invariants, see \cite[p.\ 339]{Cornulier:qihlc}. This rules out the usage of Theorem \ref{thm:computable-reachability-sets2}.(i).
	
	The only non-trivial reachability set is the reachability set for Hausdorff dimension~1, and it is the same subspace \( \braket{X_1,X_2,X_3,X_5} \) for both. Its normaliser contains in addition \( X_6 \) in both cases, and not \( X_4 \), the next repeated normaliser being the full space. This rules out the usage of Theorem \ref{thm:computable-reachability-sets2}.(ii).
\end{Esim}

\section{On isometries of semi-direct products}

\label{sec:semidirect}

In this section we focus in proving Theorem \ref{thm:alkup_yleinen}. We restate it for the reader's convenience in a longer form.

\begin{lausei}[Theorem \ref{thm:alkup_yleinen}] 
	Let \( H \) be a simply connected Lie group and \( \alpha \) a derivation of \( H\). Let \( \alpha = \ARe + \AIm + \Anil \) be the decomposition to real, imaginary and nilpotent parts as in Proposition \ref{prop:decomposition_of_a_derivation}, and denote \( \aaa=\ARe + \Anil \)
	Then the Lie group \(  H \rtimes_\alpha \mathbb{R} \) can be made isometric to the Lie group \(  H \rtimes_{\aaa} \mathbb{R} \).
\end{lausei}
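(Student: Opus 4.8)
The plan is to construct an explicit isometry between suitable left-invariant Riemannian metrics on $H \rtimes_\alpha \mathbb{R}$ and on $H \rtimes_{\aaa} \mathbb{R}$ by exploiting the fact that $\alpha$ and $\aaa$ differ only by the commuting semisimple imaginary part $\AIm$. Concretely, since $\AIm$ is semisimple with purely imaginary spectrum, the one-parameter group $t \mapsto \e^{t\AIm}$ is (after a suitable choice of inner product on $\mathfrak h$) a one-parameter group of \emph{orthogonal} automorphisms of $H$; this is the key structural input. First I would observe that $\e^{s\alpha} = \e^{s\aaa}\e^{s\AIm}$ because all three derivations pairwise commute (Proposition \ref{prop:decomposition_of_a_derivation}(i)). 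Then I would define the map
\[
\Phi \colon H \rtimes_{\aaa} \mathbb{R} \to H \rtimes_\alpha \mathbb{R}, \qquad \Phi(h,s) = \bigl(\e^{s\AIm}(h),\, s\bigr),
\]
using the automorphism $\e^{s\AIm}$ of $H$ exponentiating the derivation $s\AIm$ (here I use that $H$ is simply connected so every Lie algebra automorphism integrates), and check that $\Phi$ is a diffeomorphism (its inverse is $(h,s)\mapsto(\e^{-s\AIm}(h),s)$).

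The main computation is to verify that $\Phi$ is a group isomorphism onto its image up to the relevant twist, or more precisely that $\Phi$ intertwines the two group structures in a way compatible with left-translations. Writing the product in $H\rtimes_\alpha\mathbb{R}$ as $(h_1,s_1)(h_2,s_2) = (h_1 \cdot \e^{s_1\alpha}(h_2),\, s_1+s_2)$ and similarly for $\aaa$, one checks
\[
\Phi\bigl((h_1,s_1)(h_2,s_2)_{\aaa}\bigr) = \bigl(\e^{(s_1+s_2)\AIm}(h_1 \e^{s_1\aaa}(h_2)),\, s_1+s_2\bigr),
\]
which equals $\Phi(h_1,s_1)\,\Phi(h_2,s_2)$ in $H\rtimes_\alpha\mathbb{R}$ precisely because $\e^{(s_1+s_2)\AIm}$ distributes over the product and $\e^{s_2\AIm}\e^{s_1\aaa} = \e^{s_1\aaa}\e^{s_2\AIm}$ (commuting exponentials), so that $\e^{s_1\AIm}(h_1)\cdot \e^{s_1\alpha}(\e^{s_2\AIm}(h_2)) = \e^{s_1\AIm}(h_1)\cdot \e^{s_1\aaa}\e^{(s_1+s_2)\AIm}(h_2)$; after simplification this matches. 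Thus $\Phi$ is a Lie group isomorphism, and the two groups are already abstractly isomorphic — but that alone is not what we want, since being made isometric is about metrics, not abstract isomorphism; the point is that $\Phi$ will be an \emph{isometry} for well-chosen metrics.

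To produce the metrics, fix on $\mathfrak h$ an inner product $\langle\cdot,\cdot\rangle_{\mathfrak h}$ for which $\AIm$ is skew-symmetric — possible since $\AIm$ is semisimple with imaginary spectrum, hence conjugate to a skew-symmetric matrix — so that $\e^{s\AIm}$ acts orthogonally for every $s$. On $\mathfrak h \oplus \mathbb{R}\partial_t$ take the inner product making this an orthogonal direct sum with $|\partial_t| = 1$, and let $\rho_\alpha$, $\rho_{\aaa}$ be the corresponding left-invariant Riemannian metrics on $H\rtimes_\alpha\mathbb{R}$ and $H\rtimes_{\aaa}\mathbb{R}$. The final step is to check $\Phi^*\rho_\alpha = \rho_{\aaa}$: by left-invariance it suffices to compute the differential of $\Phi$ at the identity composed with left-translations, and the only nontrivial ingredient is that the derivative of $h\mapsto\e^{s\AIm}(h)$ contributes the orthogonal map $\e^{s\AIm}$ on the $\mathfrak h$-factor (plus a term along $\partial_t$ coming from $s$-dependence, which I must track carefully), and orthogonality of $\e^{s\AIm}$ kills the norm distortion. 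I expect the bookkeeping of this differential — specifically handling the $s$-dependence of the automorphism $\e^{s\AIm}$ and confirming the cross terms vanish or are absorbed — to be the main obstacle; everything else is formal. Once $\Phi^*\rho_\alpha = \rho_{\aaa}$ is established, $\Phi\colon(H\rtimes_{\aaa}\mathbb{R},\rho_{\aaa}) \to (H\rtimes_\alpha\mathbb{R},\rho_\alpha)$ is the desired isometry, proving that the two groups can be made isometric.
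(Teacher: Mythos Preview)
Your computation that $\Phi$ is a Lie group isomorphism is incorrect, and this is where the whole argument breaks down. Expanding both sides carefully, you get
\[
\Phi\bigl((h_1,s_1)(h_2,s_2)_{\aaa}\bigr)
= \bigl(\e^{(s_1+s_2)\AIm}(h_1)\cdot \e^{s_1\aaa}\e^{(s_1+s_2)\AIm}(h_2),\,s_1+s_2\bigr)
\]
while
\[
\Phi(h_1,s_1)\cdot_\alpha\Phi(h_2,s_2)
= \bigl(\e^{s_1\AIm}(h_1)\cdot \e^{s_1\aaa}\e^{(s_1+s_2)\AIm}(h_2),\,s_1+s_2\bigr);
\]
the first factors differ by $\e^{s_2\AIm}$ acting on $h_1$. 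This should already be a warning sign: if $\Phi$ were a group isomorphism the theorem would be vacuous, whereas in fact $H\rtimes_\alpha\mathbb{R}$ and $H\rtimes_{\aaa}\mathbb{R}$ are typically \emph{not} isomorphic (take $H=\mathbb{R}^2$ and $\alpha=\left(\begin{smallmatrix}1&-1\\1&1\end{smallmatrix}\right)$, so $\aaa=\Id$; the two $3$-dimensional semidirect products are non-isomorphic Bianchi algebras).

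The deeper problem is that $\Phi$ is not an isometry either: the cross terms you flag as ``the main obstacle'' genuinely do not vanish. In the same example, with the standard inner product on $\mathbb{R}^2$ (for which $\AIm$ is skew), both left-invariant metrics coincide with the hyperbolic metric $\e^{-2s}(dx^2+dy^2)+ds^2$, but a direct pullback shows
\[
\Phi^*\bigl(\e^{-2s}(dx^2+dy^2)+ds^2\bigr) = \e^{-2s}(dx^2+dy^2) + 2\e^{-2s}(x\,dy - y\,dx)\,ds + \bigl(1+\e^{-2s}(x^2+y^2)\bigr)ds^2,
\]
which is not the original metric away from the axis $x=y=0$. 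The $s$-dependence of $\e^{s\AIm}$ produces an irreducible shear that no choice of inner product removes.

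The paper's approach avoids this by using the \emph{identity} map on the underlying manifold $H\times\mathbb{R}$ rather than $\Phi$. The key computation is that
\[
(L^{\alpha}_{(h,t)})^{-1}\circ L^{\aaa}_{(h,t)}\,(m,s) = (\e^{-t\AIm}(m),\,s),
\]
which is $\e^{-t\AIm}\times\Id$ with $t$ \emph{fixed} --- no dependence on the point $(m,s)$. Since $\e^{-t\AIm}\times\Id$ is an automorphism of $H\rtimes_\alpha\mathbb{R}$ whose differential at the identity is orthogonal (for an $\AIm$-invariant inner product, obtained by averaging over the precompact group $\{\e^{t\AIm}\}$), it is an isometry of the left-$\alpha$-invariant metric $\rho$. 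Hence every $L^{\aaa}_{(h,t)}$ is a composition of two $\rho$-isometries, so $\rho$ is simultaneously left-invariant for both group structures, and the identity map is the desired isometry.
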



While Theorem \ref{thm:alkup_yleinen} may be applied outside the family of solvable groups, also within the family of solvable groups sometimes it might be practical to directly apply Theorem \ref{thm:alkup_yleinen} to find isometries between two solvable groups when neither of them is completely solvable.  Indeed, we remark that \(  H \rtimes_{\aaa} \mathbb{R} \) does not need to be completely solvable when \( H \) is not completely solvable. This approach would avoid the work to find their common real-shadow as in Fact \ref{thm:graaf-CKLNO}.

\begin{proof}[Proof of Theorem \ref{thm:alkup_yleinen}]
	The groups \( H \rtimes_\alpha \mathbb{R} \) and \( H \rtimes_{\aaa} \mathbb{R} \) may be seen as acting by left-translations on the manifold \( H \times \mathbb{R} \).
	Hence, the statement is proven by constructing a Riemannian metric on the manifold \( H \times \mathbb{R} \) for which both these actions are by isometries. 
	Denoting by \( 1\) the element \( (1_H,0) \in H \times \mathbb{R}  \),
	it is enough to construct a scalar product \( \rho \) on the tangent space \( T_{1} (H \times \mathbb{R}) \) with the following property%
	\newcommand{\propJ}{(J)}
	\propJ
	\begin{itemize}
		\item[\propJ] whenever two elements \( g_1 \in H \rtimes_\alpha \mathbb{R} \) and \( g_2 \in H \rtimes_{\aaa} \mathbb{R} \) satisfy 
		\( g_1(1) = g_2(1) \),
		then the differential of \( g_1^{-1} \circ g_2 \) is an isometry of the scalar product \( \rho \)
	\end{itemize}
	If \( \rho \) satisfies the property \propJ, then it can be transported to a Riemannian metric on \(  H \times \mathbb{R}  \) with the desired properties.
	
	For a derivation \( B \) on the Lie algebra of \( H \), the map \( \eta_t^B \) will denote the automorphism with the differential \( \e^{t B} \) for \( t \in \mathbb{R} \). This automorphism is well defined and unique since \( H \) is assumed to be simply connected, and we have \( \eta_t^B \circ \eta_s^B = \eta_{t+s}^B \). 
	Remark that, since \( \AIm \) is a semisimple map with purely imaginary eigenvalues, the subgroup \( W = \{ \eta^{\AIm}_t \times \Id \colon t \in \mathbb{R} \} \subset \pp{Aut}(H) \times \{\Id\} \) is precompact. Thus we may choose a scalar product \( \rho \) on \( T_{1} (H \times \mathbb{R}) \) that is invariant under (the differentials of) the maps in the closure of \( W \). We will next see that \( \rho \) has the property \propJ, thus finishing the proof.
	
	A point \( (h,t) \in H \times \mathbb{R} \) acts by left-translations with respect to the group law of \( H \rtimes_\alpha \mathbb{R} \) on the manifold \( H \times \mathbb{R}\) as
	\begin{equation} \label{eq:tulon_maar1}
	L^\alpha_{(h,t)}(m,s) = (h * \eta_t^\alpha(m),t+s)
	\end{equation}
	and similarly for \( H \rtimes_{\aaa} \mathbb{R} \) by replacing \( \alpha \) with \( \aaa \). We deduce that if \( L_{(h,t)}^{\alpha}(1) = L_{(h',t')}^{\aaa}(1) \), then \( (h,t) = (h',t') \).
	Therefore, to establish the property \propJ, it is enough to show that the differential of the map
	\begin{equation*}
	Q_{(h,t)} = (L_{(h,t)}^{\alpha})^{-1} \circ L_{(h,t)}^{\aaa} 
	\end{equation*}
	is an isometry for the scalar product \( \rho \) for every \( (h,t) \).
	By a straightforward computation one now finds
	\begin{equation*}
	Q_{(h,t)}(m,s) 
	= (\eta^{\alpha}_{-t}(\eta^{\aaa}_{t}(m)),s)  = 
	(\eta^{\AIm}_{-t}(m),s) \piste
	\end{equation*}
	This formula means that \( Q_{(h,t)} \in W \), and since \( \rho \) was chosen to be invariant under \( W \), we are done.
\end{proof}

We get the following corollary when combining Theorem \ref{thm:alkup_yleinen} and Fact \ref{thm:graaf-CKLNO}.
\begin{seur} \label{prop:R-shadow-and-real-part-reduction}
	Let \( \mathfrak{g} \) be a Lie algebra 
	of the form \( \mathfrak{g} = \mathfrak{h} \rtimes_\alpha \mathbb{R} \), where \( \mathfrak{h} \) is completely solvable. Let \( \alpha = \ARe + \AIm + \Anil \) be the decomposition of \( \alpha \) as in Proposition \ref{prop:decomposition_of_a_derivation}. Then for \( \aaa =  \ARe + \Anil\) it holds that the Lie algebra \( \mathfrak{h} \rtimes_\aaa \mathbb{R} \) is the real-shadow of 
	\( \mathfrak{g} \).
\end{seur}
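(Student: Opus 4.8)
The plan is to reduce the statement to two ingredients already available: Theorem~\ref{thm:alkup_yleinen} together with Fact~\ref{thm:graaf-CKLNO}, and the explicit description of the real-shadow in Proposition~\ref{prop:construction_of_the_real_shadow}. First I would observe that, by Theorem~\ref{thm:alkup_yleinen}, the simply connected solvable Lie groups with Lie algebras $\mathfrak{g}=\mathfrak{h}\rtimes_\alpha\mathbb{R}$ and $\mathfrak{h}\rtimes_\aaa\mathbb{R}$ can be made isometric, so Fact~\ref{thm:graaf-CKLNO} gives that they have isomorphic real-shadows. Hence it suffices to show that $\mathfrak{h}\rtimes_\aaa\mathbb{R}$ coincides (up to isomorphism) with its own real-shadow, and by Proposition~\ref{prop:construction_of_the_real_shadow} this holds as soon as $\mathfrak{h}\rtimes_\aaa\mathbb{R}$ is completely solvable: if every $\ad_Z$ on $\mathfrak{h}\rtimes_\aaa\mathbb{R}$ has real spectrum, then its semisimple part, being the sum of a real-spectral and a commuting purely-imaginary-spectral derivation, is forced to have zero imaginary-spectral summand, i.e.\ $\ad_{\pp{si}}(Z)=0$ for all $Z$; consequently $\varphi_\mathfrak{a}\equiv 0$, the bracket $[\cdot,\cdot]_\realshadow$ equals $[\cdot,\cdot]$, and the real-shadow of $\mathfrak{h}\rtimes_\aaa\mathbb{R}$ is $\mathfrak{h}\rtimes_\aaa\mathbb{R}$ itself. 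Combined with the previous step, this identifies $\mathfrak{h}\rtimes_\aaa\mathbb{R}$ as the real-shadow of $\mathfrak{g}$.

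So the crux is the purely algebraic claim that \emph{if $\mathfrak{h}$ is completely solvable and $D$ is a derivation of $\mathfrak{h}$ with real spectrum, then $\mathfrak{h}\rtimes_D\mathbb{R}$ is completely solvable}, to be applied with $D=\aaa=\ARe+\Anil$. Note that $\aaa$ is a derivation of $\mathfrak{h}$, being the sum of the derivations $\ARe$ and $\Anil$ of Proposition~\ref{prop:decomposition_of_a_derivation}, and that its spectrum is real, since $\ARe$ has real spectrum, $\Anil$ is nilpotent, and the two commute, so $\mathrm{spec}(\aaa)=\mathrm{spec}(\ARe)$. To prove the claim I would write $\mathfrak{g}'=\mathfrak{h}\rtimes_D\mathbb{R}=\mathfrak{h}\oplus\mathbb{R}T$ with $\ad_T|_\mathfrak{h}=D$, so that $\mathfrak{h}$ is an ideal and $\mathfrak{g}'$ is solvable; complexifying and applying Lie's theorem to $\ad(\mathfrak{g}')\subset\mathfrak{gl}(\mathfrak{g}'_{\mathbb{C}})$ yields $\mathbb{R}$-linear functionals $\mu_1,\dots,\mu_n\colon\mathfrak{g}'\to\mathbb{C}$ such that the eigenvalues of $\ad_Z$ are $\mu_1(Z),\dots,\mu_n(Z)$ for each $Z$. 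For $X\in\mathfrak{h}$ the operator $\ad^{\mathfrak{g}'}_X$ preserves $\mathfrak{h}$, acts there as $\ad^\mathfrak{h}_X$, and induces $0$ on $\mathfrak{g}'/\mathfrak{h}$; since $\mathfrak{h}$ is completely solvable, $\ad^\mathfrak{h}_X$ has real spectrum, so $\mu_i(X)\in\mathbb{R}$ for all $i$. Similarly $\ad^{\mathfrak{g}'}_T$ acts as $D$ on $\mathfrak{h}$ and as $0$ on $\mathfrak{g}'/\mathfrak{h}$, so $\mu_i(T)\in\mathbb{R}$. As $\mathfrak{h}\cup\{T\}$ spans $\mathfrak{g}'$ over $\mathbb{R}$ and each $\mu_i$ is $\mathbb{R}$-linear, each $\mu_i$ is real-valued; hence every $\ad_Z$ has real spectrum and $\mathfrak{g}'$ is completely solvable.

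The main obstacle is exactly this algebraic claim; everything else is a straightforward assembly of quoted results. Here some care is needed: the sum of two operators with real spectrum need not have real spectrum, so both the commutation (for $\mathrm{spec}(\aaa)=\mathrm{spec}(\ARe)$) and the solvability (for Lie's theorem) are essential, and it is crucial that $\mathfrak{h}$ be completely solvable rather than merely solvable — this is what makes $\ad^\mathfrak{h}_X$ real-spectral — consistently with the paper's remark that $\mathfrak{h}\rtimes_\aaa\mathbb{R}$ can fail to be completely solvable when $\mathfrak{h}$ is not. A minor bookkeeping point is to apply Theorem~\ref{thm:alkup_yleinen} and Fact~\ref{thm:graaf-CKLNO} to the simply connected solvable Lie groups attached to $\mathfrak{g}$, $\mathfrak{h}\rtimes_\aaa\mathbb{R}$ and $\mathfrak{h}$, and then read the conclusion back at the Lie-algebra level, which is legitimate since the real-shadow is defined algebraically and is unique up to isomorphism.
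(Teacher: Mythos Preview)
Your argument is correct and follows the same route the paper indicates: combine Theorem~\ref{thm:alkup_yleinen} with Fact~\ref{thm:graaf-CKLNO} to see that $\mathfrak{g}$ and $\mathfrak{h}\rtimes_{\aaa}\mathbb{R}$ share a real-shadow, and then observe that $\mathfrak{h}\rtimes_{\aaa}\mathbb{R}$ is itself completely solvable, hence equals its own real-shadow. The only difference is one of detail: the paper states the corollary as an immediate consequence and leaves the complete solvability of $\mathfrak{h}\rtimes_{\aaa}\mathbb{R}$ implicit (it is used elsewhere in the text as the known fact that completely solvable groups coincide with their real-shadows), whereas you supply a careful justification via Lie's theorem, correctly noting that the real-spectrum condition is not preserved under arbitrary sums and that simultaneous triangularisation is what makes the weight functionals $\mu_i$ real on all of $\mathfrak{h}\oplus\mathbb{R}T$.
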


\section{Dimension 4}


\label{sec:4D}

The aim of this section is to prove Theorem \ref{thm:4D-classification_intro}. Namely, we find all pairs of  solvable simply connected 4-dimensional Lie groups that can be made isometric. We start from dimension 4 because dimensions 3 and below are already solved
(for a survey, see \cite{avain:fasslerledonne}).  

\subsection{Solvable groups up to isometry}
The isomorphism classes of all simply connected solvable Lie groups are known in dimension 4. Thus determining within this family the pairs of non-isomorphic groups that can be made isometric reduces by
Fact \ref{thm:graaf-CKLNO} to the determination of real-shadows of those solvable groups that are not completely solvable. 
Recall that by Fact \ref{thm:graaf-CKLNO} the relation \laina{can be made isometric} is transitive, and hence \emph{isometry (equivalence) classes} of groups are well defined objects.

The classification of simply connected Lie groups is equivalent to the classification of finite dimensional Lie algebras over \( \mathbb{R} \).
The list we shall use is given by Patera et al.\ \cite[Table I, p. 988]{avain:Patera}, which in turn is based on the classification of Mubarakzjanov \cite{avain:mubarakzjanov2,avain:mubarakzjanov1}. The list only contains the Lie algebras that are not direct products from lower dimension, and they are indexed from \( A_{4,1} \) to \( A_{4,12} \) with possible superscripts indicating one-parameter families. 
Table I in \cite{avain:Patera} also contains the classification of 3D Lie algebras; in what follows we shall use those names from \( A_{3,1} \) to \( A_{3,9} \) together with \( \mathbb{R}^n \) denoting the \(n\)-dimensional Abelian Lie algebra and \( A_2 \) denoting the unique non-Abelian 2D Lie algebra: the Lie group corresponding to \( A_2 \) was denoted by \( \pp{Aff}^+(\mathbb{R}) \) earlier.

In Table \ref{4D-luokittelu-real} we list all the simply connected completely solvable Lie groups of dimension 4. 
None of them can be made isometric to any other, and all the non-completely solvable groups (which in turn are listed in Table~\ref{4D-luokittelu-complex}) can be made isometric to exactly one of these.
In order to be able to divide the groups into families that seem to suit the purpose of classification up to isometry and quasi-isometry the best, in  Table~\ref{4D-luokittelu-real} we have relabelled the families in the left-most column, and we have written the unique completely solvable representative of the isometry class, in the notation of Patera et al., to the 2nd column. So the two left-most columns of Table~\ref{4D-luokittelu-real} serve as a dictionary. We indicate the range of parameters immediately after the labels. Two concrete examples on how to read the table: our label \( (2) \) denotes the Lie algebra \( \mathbb{R} \times  A_{3,1}  \) which is the direct product of the one-dimensional Abelian group and the Heisenberg group; instead, Lie algebra \( (6,\frac{1}{2},1)\) denotes the Lie algebra \( A_{4,5}^{1/2,1} \) of the classification of \cite{avain:Patera}. 

The right-most column of Table \ref{4D-luokittelu-real} has a mark X if and only if the isometry class of this group consists of more than one isomorphism classes of simply connected solvable Lie groups. 
The third column is about quasi-isometric classification, and we come back to it in Section \ref{sec:QI-classification-dim-4}.

\begin{table}[htb]
	\begin{tabular}{llll}
		\textbf{Our labelling} & \cite{avain:Patera} & \textbf{QI-type} & \(*\)
		\\
		\hline
		\((1)\)	&	\( \mathbb{R}^4 \)  & poly growth &X \YT \AT
		\\ \hline
		\((2)\)		&		\( \mathbb{R} \times A_{3,1} \)  & poly growth &X \YT \AT
		\\ \hline
		\((3)\)&				\( A_{4,1} \) & poly growth &\YT \AT
		\\ \hline
		\((4,a) \:\: a \in \vali{0,\infty}\)	&		\( A_{4,2}^a \)  & Heintze &\YT \AT
		\\ \hline
		\((5)\)		&	\( A_{4,4} \)   & Heintze &\YT \AT
		\\ \hline
		\((6,a,b) \:\: a,b \in \valil{0,1}, \: b > a\) &\( A_{4,5}^{a,b}  \)  & Heintze &\YT \AT
		\\ \hline
		\( (7,a) \:\: a \in \valil{0,1} \) &\( A_{4,5}^{a,a}  \)  & Heintze & X \YT \AT
		\\ \hline
		\((8)	\)	&	\( A_{4,7} \)   & Heintze &\YT \AT
		\\ \hline
		\((9,a)	\:\: a \in \vali{0,1} \)	&	\( A_{4,9}^{a}  \)  & Heintze &\YT \AT
		\\ \hline
		\((10)\)	&		\( A_{4,9}^1 \)   & Heintze & X \YT \AT
		\\ \hline
		\((11,a) \:\: a \in \vali{-\infty,0}\)	&		\( A_{4,2}^a \) & conedim 1 &\YT \AT
		\\ \hline
		\( (12,a,b) \:\: a,b \in \vali{-1,1} \pois \{0\}, \: b > a, \: a < 0  \) & \( A_{4,5}^{a,b}  \)  & conedim 1 &\YT \AT
		\\ \hline
		\((13,a)  \:\: a \in \valir{-1,0}  \) &\( A_{4,5}^{a,a}  \)  & conedim 1 & X \YT \AT
		\\ \hline
		\( (14,a)\:\: a \in \vali{-1,0} \) & \(A_{4,9}^{a} \quad  \)  & conedim 1 & \YT \AT
		\\ \hline
		\( (15) \)& \( A_{4,8} \)    & conedim 1 &\YT \AT
		\\ \hline
		\( (16)\) & \( A_{4,9}^0 \)    & conedim 2 &\YT \AT
		\\ \hline
		\( (17)\) & \( \mathbb{R} \times  A_{3,2} \)    & conedim 2 &\YT \AT
		\\ \hline
		\( (18)\) & \( \mathbb{R} \times  A_{3,3} \)    & conedim 2 & X \YT \AT
		\\ \hline
		\( (19)\) & \( A_2 \times  A_{2} \)    & conedim 2 & \YT \AT
		\\ \hline
		\( (20,a) \:\: a \in \vali{-1,1} \pois \{0\}\) & \( \mathbb{R} \times  A_{3,5}^a \)    & conedim 2 & \YT \AT
		\\ \hline
		\( (21)\) & \( \mathbb{R} \times  A_{3,4} \)    & conedim 2 & \YT \AT
		\\ \hline
		\( (22)\) & \( A_{4,3} \)    & conedim 3 & \YT \AT
		\\ \hline
		\( (23)\) & \( \mathbb{R}^2 \times A_2 \)    & conedim 3 & X \YT \AT
		\\ \hline \\
	\end{tabular}
	\caption{Completely solvable Lie algebras of dimension 4.}
	\label{4D-luokittelu-real}
\end{table}

Table \ref{4D-luokittelu-complex} lists all the remaining solvable Lie algebras of dimension 4. Namely, it lists those Lie algebras that are not completely solvable. Each of them has some completely solvable representative in its isometry class, namely the real-shadow. This real-shadow is indicated on the middle column, and our label for its isometry class is written in the right-most column. The computation of the real shadow is very simple after Corollary \ref{prop:R-shadow-and-real-part-reduction}.

\begin{table}[htb]
	\begin{tabular}{lll}
		\textbf{Lie algebra} & \textbf{real-shadow} & \textbf{isometry class}
		\\
		\hline
		\( \mathbb{R} \times A_{3,6} \) & \(\mathbb{R}^4\)  & \((1)\) \AT \YT
		\\ \hline
		\( A_{4,10} \) & \( \mathbb{R} \times A_{3,1} \) &  \( (2)\) \AT \YT
		\\ \hline
		\( A_{4,6}^{a,b} \:\: a,b \in \vali{0,\infty},\: a \le b \) & \( A_{4,5}^{a/b,a/b} \) & \( (7,a/b) \) \AT \YT
		\\ \hline
		\( A_{4,6}^{a,b} \:\: a,b \in \vali{0,\infty},\: a > b \) & \( A_{4,5}^{b/a,b/a} \) & \( (7,b/a) \) \AT \YT
		\\ \hline
		\( A_{4,11}^{a} \:\: a \in \vali{0,\infty} \) & \( A_{4,9}^1 \)  &  \( (10) \) \AT \YT
		\\ \hline
		\( A_{4,6}^{-a,b} \:\: a,b \in \vali{0,\infty},\: a\le b \) & \( A_{4,5}^{-a/b,-a/b} \) & \( (13,-a/b) \) \AT \YT
		\\ \hline
		\( A_{4,6}^{a,-b} \:\: a,b \in \vali{0,\infty},\: b < a  \) & \( A_{4,5}^{-b/a,-b/a} \) & \( (13,-b/a) \) \AT \YT
		\\ \hline
		\( \mathbb{R} \times A_{3,7}^a \:\:  a \in \vali{0,\infty} \)  & \(\mathbb{R} \times A_{3,3} \) & \((18)\)\AT \YT
		\\ \hline
		\( A_{4,12} \)  & \(\mathbb{R} \times A_{3,3} \) & \((18)\)\AT \YT
		\\ \hline
		\( A_{4,6}^{a,0} \)  & \(\mathbb{R}^2 \times  A_2\)  & \( (23) \) \AT \YT
		\\ \hline \\
	\end{tabular}
	\caption{Solvable but not completely solvable Lie algebras of dimension 4, and their real-shadows.}
	\label{4D-luokittelu-complex} 
\end{table}

\subsection{Dropping the assumption of solvability}
	We do not have many tools to treat non-solvable simply connected Lie groups. However, in dimension 4 there are no Levi decompositions other than the direct products (see \cite[p.\ 301]{MacCallum1999}), and hence the only two non-solvable Lie groups are \( \mathbb{R} \times  \mathbb{S}^3 \) and \( \mathbb{R} \times \widetilde{\pp{SL}}(2) \), and we can say something about these.
	We are thus interested if either of these two groups can be made isometric to some solvable groups, or if they can be made isometric to each other. For topological reasons, \( \mathbb{R} \times  \mathbb{S}^3 \) cannot be made isometric to any other simply connected 4-dimensional group: it is the only group not homeomorphic to \( \mathbb{R}^4 \).
	The case of the group \( \mathbb{R} \times \widetilde{\pp{SL}}(2) \) however is more involved, and we will see next what we can say about it.
	
	We know from Proposition \ref{prop:CKLNO-selvennys2} that \(\widetilde{\pp{SL}}(2) \) can be made isometric to \( \mathbb{R} \times A_2 \), hence the groups \( \mathbb{R} \times \widetilde{\pp{SL}}(2) \) and \( \mathbb{R}^2 \times A_2 \) can be made isometric. Consequently, \( \mathbb{R} \times \widetilde{\pp{SL}}(2) \) must have cone dimension 3, which is the cone dimension of \( \mathbb{R}^2 \times A_2 \) as one may see from Proposition \ref{prop:cone-dimensions-algebraically}.
	Thus, checking the cone dimensions of solvable groups from Table \ref{4D-luokittelu-real}, only the question remains
	whether or not  \( \mathbb{R} \times \widetilde{\pp{SL}}(2) \) can be made isometric also to the group \( A_{4,3} \) or to some groups in the family \( A_{4,6}^{a,0} \) for \( a \in \mathbb{R} \).
	Notice for example that 
	the fact that \( A_{4,3} \) and \( \mathbb{R}^2 \times A_2 \) cannot be made isometric
	does not rule out that \( \mathbb{R} \times \widetilde{\pp{SL}}(2) \) and \( A_{4,3} \) can be made isometric, because \( \mathbb{R} \times \widetilde{\pp{SL}}(2) \) is not solvable.
	Similarly, if \( \mathbb{R} \times \widetilde{\pp{SL}}(2) \) can be made isometric to \( A_{4,6}^{a,0} \) for some \( a \in \mathbb{R} \), it does not imply anything for \( A_{4,6}^{a',0} \) with \( a' \neq a \). One might wish to compare this phenomenon to \cite[Theorem 4.21]{avain:CKLNO}.

\subsection{Quasi-isometric classification of 4-dimensional groups}

\label{sec:QI-classification-dim-4}

We don't have a complete quasi-isometric classification of simply connected 4-dimensional Lie groups. In this section we show what is known about it. Recall that quasi-isometry equivalence classes are necessarily unions of the isometry classes, and these isometry classes we just established for simply connected solvable groups. Hence it is enough to consider the groups in Table \ref{4D-luokittelu-real} and the two non-solvable groups \( \mathbb{R} \times  \mathbb{S}^3 \) and \( \mathbb{R} \times \widetilde{\pp{SL}}(2) \).

Recall that the degree of polynomial growth is a quasi-isometric invariant. 
The degree of polynomial growth for \( \mathbb{R} \times  \mathbb{S}^3 \) is 1, so it cannot be quasi-isometric either to any group in Table \ref{4D-luokittelu-real} or the group \( \mathbb{R} \times \widetilde{\pp{SL}}(2) \).
Consequently, the quasi-isometry class of \( \mathbb{R} \times  \mathbb{S}^3 \) within the family of simply connected 4-dimensional groups, is a singleton.

About the group \( \mathbb{R} \times \widetilde{\pp{SL}}(2) \), the only thing that we are able to say is that since it can be made isometric to \( \mathbb{R}^2 \times A_2 \), then it must have cone dimension 3. 

For all completely solvable groups that are not Heintze groups and do not have polynomial growth, we have calculated, using Proposition \ref{prop:cone-dimensions-algebraically}, their cone dimensions and marked them to
the third column titled \laina{QI-type} of Table \ref{4D-luokittelu-real}. The cone dimensions are quasi-isometry invariants by \cite{MR3006687}. 

Recall that while Heintze groups have cone dimension 1, they are quasi-isometrically distinct from the non-Heintze groups of cone dimension 1 since in dimension 4 only the Heintze groups are Gromov hyperbolic  by \cite{MR2826945} (see also \cite[p.\ 277]{Cornulier:qihlc}).

The quasi-isometric classification of 4-dimensional purely real Heintze groups can be done by case-by-case study.
However, a direct argument follows from Theorem~\ref{thm:computable-reachability-sets2}.(i), Proposition \ref{go-to-the-boundary} and the results of Xie \cite{MR3180486}, Carrasco Piaggio and Sequeira \cite[Theorem 1.3]{avain:CPS}: The purely real Heintze groups in Table \ref{4D-luokittelu-real} split into two categories
\begin{align*}
&\text{nilradical } \mathbb{R}^3 \qquad (4,a) \quad (5) \quad (6,a,b)
\\
&\text{nilradical } \pp{Heis} \qquad (8) \quad (9,a) \quad (10)
\end{align*}
Those with nilradical \( \mathbb{R}^3 \) are quasi-isometrically distinct from each other by \cite{MR3180486}. Those with nilradical \( \pp{Heis} \) are quasi-isometrically distinct from each other by \cite[Theorem 1.3]{avain:CPS}. All the quasi-isometry relations between these two classes are excluded by Theorem~\ref{thm:computable-reachability-sets2}.(i). Thus, the quasi-isometry classes, isometry classes, and isomorphism classes all agree for purely real Heintze groups of dimension 4.

For the groups of polynomial growth,
our classes (1), (2) and (3) are known to be quasi-isometry equivalence classes, because the completely solvable representatives (in this case, nilpotent representatives) are Carnot groups and quasi-isometric classification of Carnot groups is solved by Pansu \cite{pansu}. 

As a conclusion, we may present the following proposition.
\begin{prop}
	Let \( \mathcal{G} \) be the family of the isomorphism classes of 4-dimensional simply connected solvable groups that either have polynomial growth or are Heintze groups. Then two elements \( G,H \in   \mathcal{G} \) are quasi-isometric if and only if they can be made isometric. If the groups \( G \) and \( H \) are completely solvable, then they are quasi-isometric if and only if they are isomorphic.
\end{prop}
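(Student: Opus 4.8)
The plan is to prove the two implications separately, handling the nontrivial one by splitting $\mathcal{G}$ according to growth. The direction ``can be made isometric $\Rightarrow$ quasi-isometric'' is immediate, since isometric metric spaces are quasi-isometric and the quasi-isometry type of a Lie group is independent of the chosen left-invariant Riemannian distance. The last sentence of the statement then follows formally: a completely solvable simply connected Lie group is isomorphic to its own real-shadow, because in the construction of Proposition~\ref{prop:construction_of_the_real_shadow} one has $\ad_\pp{si}(X)=0$ for every $X$ (all eigenvalues of $\ad_X$ being real), hence $\varphi_\mathfrak{a}=0$ and $[\cdot,\cdot]_\realshadow=[\cdot,\cdot]$. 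So if completely solvable $G,H$ are quasi-isometric, the main implication (below) yields that they can be made isometric, whence Fact~\ref{thm:graaf-CKLNO} forces their real-shadows --- that is, $G$ and $H$ themselves --- to be isomorphic; the converse is trivial.

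It remains to show that quasi-isometric $G,H\in\mathcal{G}$ can be made isometric, equivalently --- again by Fact~\ref{thm:graaf-CKLNO} --- that they have isomorphic real-shadows. Since the degree of polynomial growth is a quasi-isometry invariant while Heintze groups have exponential growth, no polynomial-growth group in $\mathcal{G}$ is quasi-isometric to a Heintze group, so the two subfamilies can be treated independently. For the polynomial-growth subfamily I would argue that every such group can be made isometric --- hence is quasi-isometric --- to its real-shadow, which in dimension at most $4$ is one of the three nilpotent, in fact Carnot, groups $\mathbb{R}^4$, $\mathbb{R}\times\pp{Heis}$, $A_{4,1}$; being pairwise non-isomorphic, these are pairwise non-quasi-isometric by Pansu's theorem \cite{pansu}. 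Hence quasi-isometric polynomial-growth groups in $\mathcal{G}$ have the same real-shadow.

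For the Heintze subfamily, I would first pass to purely real representatives: by Proposition~\ref{prop:alekseevski} each Heintze group in $\mathcal{G}$ can be made isometric --- hence is quasi-isometric --- to a purely real Heintze group with the same real-shadow, so one may assume $G=N_1\rtimes_\alpha\mathbb{R}$ and $H=N_2\rtimes_\beta\mathbb{R}$ are purely real. By Proposition~\ref{go-to-the-boundary}, for suitable $\lambda_1,\lambda_2>0$ the homogeneous groups $(N_1,\lambda_1\alpha)$ and $(N_2,\lambda_2\beta)$ are biLipschitz equivalent. The nilradicals $N_1,N_2$ are $3$-dimensional simply connected nilpotent groups, hence each is $\mathbb{R}^3$ or $\pp{Heis}$; by Theorem~\ref{thm:computable-reachability-sets2}.(i) a biLipschitz equivalence of the boundaries makes $N_1$ and $N_2$ quasi-isometric as Riemannian Lie groups, and, as noted after that theorem, this forces their nilpotency steps to coincide, so $N_1\cong N_2$. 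It then suffices to invoke the known quasi-isometric classifications for a fixed nilradical: among $4$-dimensional purely real Heintze groups, those with abelian nilradical $\mathbb{R}^3$ are quasi-isometric only when isomorphic by Xie \cite{MR3180486}, and those with nilradical $\pp{Heis}$ only when isomorphic by Carrasco Piaggio--Sequeira \cite[Theorem~1.3]{avain:CPS}. Thus quasi-isometric purely real $4$-dimensional Heintze groups are isomorphic, so --- being their own real-shadows --- $G$ and $H$ have isomorphic real-shadows, as required.

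The hard part will be the Heintze subfamily, where everything rests on two inputs: the cross-nilradical exclusion, for which Theorem~\ref{thm:computable-reachability-sets2}.(i) together with the step distinction between $\mathbb{R}^3$ and $\pp{Heis}$ is exactly what is needed; and the classification results of Xie and of Carrasco Piaggio--Sequeira, which must be cited in the precise form that separates all the one-parameter families appearing in Table~\ref{4D-luokittelu-real} (such as $(4,a)$, $(6,a,b)$, $(9,a)$). Verifying that those cited theorems really do discriminate the parameters in this low-dimensional range is the only step that is not essentially formal; the polynomial-growth part and the remaining deductions are routine.
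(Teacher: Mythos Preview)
Your proposal is correct and follows essentially the same route as the paper: separate the polynomial-growth and Heintze subfamilies, handle the former via the fact that the three nilpotent real-shadows are Carnot and hence distinguished by Pansu, and handle the latter by splitting on the nilradical ($\mathbb{R}^3$ versus $\pp{Heis}$), using Theorem~\ref{thm:computable-reachability-sets2}.(i) to exclude cross-nilradical quasi-isometries and then invoking Xie \cite{MR3180486} and Carrasco Piaggio--Sequeira \cite[Theorem~1.3]{avain:CPS} within each class. Your presentation is slightly more explicit about the logical reductions (e.g., that completely solvable groups are their own real-shadows, and the use of Proposition~\ref{go-to-the-boundary} to pass to boundary biLipschitz maps), but the substance is the same.
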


\section{Dimension 5}

\label{sec:5D}

The classification of real solvable Lie algebras is known in dimensions five also, see \cite{avain:Patera}. However, due to the multitude of isomorphism classes, we rather restrict our attention to the groups of polynomial growth.

\begin{table}[htb]
	\begin{tabular}{llll}
		\textbf{Patera et al.} & \textbf{de Graaf} &  \textbf{nilshadow} & \( G_\infty \) 
		\\	\hline
		\( \mathbb{R}^5 \) &   &  &  \AT \YT
		\\	\hline
		\( \mathbb{R}^2 \times A_{3,1} \) &   &  &  \AT \YT
		\\	\hline
		\( \mathbb{R} \times A_{4,1} \) &   &  &  \AT \YT
		\\	\hline
		\( A_{5,1} \) &  \(L_{5,8}\) &  &  \AT \YT
		\\	\hline
		\( A_{5,2} \) & \( L_{5,7} \)  &  &  \AT \YT
		\\	\hline
		\( A_{5,3} \) &  \(L_{5,9}\) &  &  \AT \YT
		\\	\hline
		\( A_{5,4} \) & \( L_{5,4} \)  &  &  \AT \YT
		\\	\hline
		\( A_{5,5} \) &  \(L_{5,5}\) &  & \( \mathbb{R} \times A_{4,1} \)  \AT \YT
		\\	\hline
		\( A_{5,6} \) & \( L_{5,6} \)  &  & \( A_{5,2} \) \AT \YT
		\\	\hline
		\( \mathbb{R}^2 \times A_{3,6} \) &  & \( \mathbb{R}^5 \) &  \AT \YT
		\\	\hline
		\( \mathbb{R} \times  A_{4,10} \) &  & \( \mathbb{R}^2 \times A_{3,1} \) &  \AT \YT
		\\	\hline
		\( A_{5,17}^{s,0,0} \: \: s \neq 0 \) &  & \( \mathbb{R}^5  \) &  \AT \YT
		\\	\hline
		\( A_{5,14}^0 \) &  & \( \mathbb{R}^2 \times A_{3,1} \) &  \AT \YT
		\\	\hline
		\( A_{5,26}^{0, \varepsilon} \:\: \varepsilon= \pm 1 \) &  & \( A_{5,4} \) &  \AT \YT
		\\	\hline
		\( A_{5,18}^{0} \) &  & \( A_{5,1} \) &  \AT \YT
		\\ \hline \\
	\end{tabular}
	\caption{Solvable Lie algebras of type (R) in dimension 5.}
	\label{table:5d-poly-growth}
\end{table}

The first task is to determine a list of all simply connected solvable Lie groups of polynomial growth in dimension 5. We are not aware of a reference where this is done, so we have to do it by ourselves using the classification of real solvable Lie algebras presented in Patera et al.\ \cite[p. 989]{avain:Patera}. Notice that one can pretty quickly find all the candidates for groups of polynomial growth by excluding the Lie algebras with a bracket relation of the type \( [e_i,e_j] = \lambda e_j \) for \( \lambda \neq 0 \): This is an obstruction of being polynomial growth, since all the eigenvalues of all the adjoint maps should be purely imaginary (see Section \ref{sec:preliminaries_isometries}). The candidates so found are possible to check by hand if they have polynomial growth or not.

Taking into account the direct products, the full list of solvable simply connected Lie groups of polynomial growth is presented in Table \ref{table:5d-poly-growth}. In the first 2 columns, we have recalled a dictionary between classifications presented in Patera et al.\ and that by de Graaf \cite{deGraaf-2007-dim_6_nilpotent_lie_algebras} for nilpotent Lie algebras. For nilpotent algebras that are not Carnot algebras, we have indicated their associated Carnot algebras in the 4th column. For non-nilpotent Lie algebras, we have indicated their nilshadow in the 3rd column.

From the algebraic classification given in Table \ref{table:5d-poly-growth} one may directly deduce the classification up to isometries and quasi-isometries (up to one open case we will mention soon) using the list of invariants we recorded in beginning of Section \ref{sec:algebraic_tools_for_QI}. Indeed, recalling invariant \ref{inv:D} and Remark \ref{rmk:nilshadow_is_real}, every group is isometric to its nilshadow, and in dimension 4 it happens that the nilshadows are always Carnot groups (Carnot groups are those with empty field both in \laina{nilshadow} and in \laina{\(G_\infty\)}). Moreover, the nilshadows happen to be those Carnot groups that are not associated Carnot groups of some nilpotent non-Carnot groups. Hence the classification up to isometry and quasi-isometry is ready for the groups of polynomial growth and those Carnot groups that appear as their nilshadows. Only problem that remains after applying \ref{inv:A} is if \( A_{5,5} \) or \( A_{5,6} \) are quasi-isometric to their associated Carnot groups, recall that by \cite{avain:Wol63} isometries between non-isomorphic nilpotent groups cannot exist. The invariant \ref{inv:B} tells that \( A_{5,5} \) is not quasi-isometric to its associated Carnot group \( \mathbb{R} \times A_{4,1} \) (see \cite[Section 19.7]{Cornulier:qihlc}), but the possible quasi-isometry relation between \( A_{5,6} \) and \( A_{5,2} \) remains unanswered by this analysis.

In conclusion, as was the case for the family of simply connected solvable Lie groups of dimension 4, we are unable to completely classify simply connected Lie groups of polynomial growth in dimension 5. However, here it is only one pair of groups whose possible quasi-isometry relation remains open: whether or not the Lie group \( A_{5,6} \) is quasi-isometric to its associated Carnot group \( A_{5,2} \). This question cannot be answered by the community for now.


\section{Appendix: A direct proof in Abelian case}

\label{sec:abelian-theorem}

In this section we prove Theorem~\ref{lause:abelianity_preserves_with_Seba}. It is a less general statement than Theorem~\ref{thm:computable-reachability-sets2}.(i), but the proof is completely different in spirit and might have independent interest and possibilities to generalise. The proof is highly inspired by the results of \cite{avain:CPS}.

The following definition appeared implicitly in \cite{avain:CPS}, but we prefer to have a name for it.

\begin{maar} \label{maar:characteristic_subgroup}
	The \emph{characteristic subalgebra} for  a purely real homogeneous group \( (N,\alpha) \) is the subalgebra \( \mathfrak{h}_\alpha \) of \( \mathfrak{n} \) constructed as follows. 
	Consider a basis of \( \mathfrak{n} \) where \( \alpha \) is in Jordan form, and 
	let \( \lambda_1 \) denote the smallest of the eigenvalues of \( \alpha \).
	Let \( V_{\lambda_1} \) be the subspace corresponding to the Jordan-blocks of \( \alpha\) of eigenvalue \( \lambda_1 \)  (i.e., the generalised eigenspace of eigenvalue \( \lambda_1\)). Let \( \hat{V}_1 \subset V_{\lambda_1} \) be the sum of the subspaces corresponding to the Jordan blocks in \( V_{\lambda_1} \) of maximal size. Next, let \( \mathcal{V}_1 \) consist of eigenvectors of eigenvalue \( \lambda_1 \)  inside \( \hat{V}_1 \), and finally define \( \mathfrak{h}_\alpha = \pp{LieSpan}(\mathcal{V}_1) \). 
	We further denote by \( H_\alpha \) the subgroup of \( N \) with Lie algebra \( \mathfrak{h}_\alpha \) and call it the \emph{characteristic subgroup} of \( (N,\alpha) \).
\end{maar}

\begin{huom}
	Equivalently, the characteristic subalgebra is defined as follows: Let \( k \in \mathbb{N} \) be the unique integer such that \( (\alpha|_{V_{\lambda_1}} - \lambda_1 \Id)^k \neq 0 \) and \( (\alpha|_{V_{\lambda_1}} - \lambda_1 \Id)^{k+1} = 0 \). Then \( \mathcal{V}_1 = \pp{Im}(\alpha|_{V_{\lambda_1}} - \lambda_1 \Id)^k\) and \( \mathfrak{h}_\alpha = \pp{LieSpan}(\mathcal{V}_1) \).
\end{huom}

In the following, we list some facts related to characteristic subalgebras and subgroups.
\begin{prop}\label{prop:derivations_preserve_char-subalg}
	\begin{lista}
		\item
		\(\mathfrak{h}_\alpha = \mathfrak{n}  \) if and only if \( (N,\alpha) \) is of Carnot type. 
		\item	\( \mathfrak{h}_\alpha \) is preserved under \( \alpha \).
		\item	Suppose \( F \colon (N_1,\alpha) \to (N_2,\beta) \) is a biLipschitz map between two purely real homogeneous groups, and suppose \( F(1_{N_1}) = 1_{N_2} \). Then \( F(H_\alpha) = H_\beta \).
	\end{lista}
\end{prop}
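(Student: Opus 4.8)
The three items are of quite different nature, so I would handle them in turn, putting the real work into (iii).

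\smallskip\noindent\emph{Item (i).} Here I would use the equivalent description of $\mathfrak{h}_\alpha$ from the Remark after Definition \ref{maar:characteristic_subgroup}, namely $\mathcal{V}_1=\pp{Im}(\alpha|_{V_{\lambda_1}}-\lambda_1\Id)^k$. If $(N,\alpha)$ is of Carnot type, then $\alpha$ is diagonalisable, so $k=0$ and $\mathcal{V}_1=V_{\lambda_1}$ is the whole smallest eigenspace, which by definition of Carnot type Lie-generates $\mathfrak{n}$; hence $\mathfrak{h}_\alpha=\pp{LieSpan}(V_{\lambda_1})=\mathfrak{n}$. Conversely, if $\mathfrak{h}_\alpha=\mathfrak{n}$, then since every element of $\mathcal{V}_1$ is an eigenvector of $\alpha$ of eigenvalue $\lambda_1$, every non-zero iterated bracket of elements of $\mathcal{V}_1$ is again an eigenvector of $\alpha$, of some eigenvalue $j\lambda_1$ with $j\ge 1$; as these brackets span $\mathfrak{h}_\alpha=\mathfrak{n}$, the algebra $\mathfrak{n}$ admits a basis of $\alpha$-eigenvectors, so $\alpha$ is diagonalisable over $\mathbb{R}$, and the smallest eigenspace, containing the Lie-generating set $\mathcal{V}_1$, itself Lie-generates $\mathfrak{n}$; thus $(N,\alpha)$ is of Carnot type.

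\smallskip\noindent\emph{Item (ii).} The same observation does it: $\alpha(\mathcal{V}_1)=\lambda_1\mathcal{V}_1\subseteq\mathfrak{h}_\alpha$, and since $\alpha$ is a derivation, the Leibniz rule together with induction on the length of brackets shows that a derivation taking a Lie-generating set of a subalgebra into that subalgebra preserves it; hence $\alpha(\mathfrak{h}_\alpha)\subseteq\mathfrak{h}_\alpha$ (equivalently, $\mathfrak{h}_\alpha$ is spanned by $\alpha$-eigenvectors). In particular $(H_\alpha,\alpha|_{\mathfrak{h}_\alpha})$ is again a purely real homogeneous group, and the chain $H_\alpha=K_0<K_1<\cdots<K_m=N$ of iterated normalisers is $\alpha$-invariant, because the normaliser of an $\alpha$-invariant subalgebra is $\alpha$-invariant.

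\smallskip\noindent\emph{Item (iii), the main point.} The plan is to present $H_\alpha$, and each of its left cosets, through a metrically intrinsic construction, and then invoke biLipschitz invariance. The forward inclusion is soft: by the Orbit Theorem argument used to prove Theorem \ref{thm:computable-reachability-sets}, applied to the scalar-invariant set $\mathcal{V}_1$, which lies in no proper subalgebra of $\mathfrak{h}_\alpha$, every point of $H_\alpha$ is a finite product $\exp(X_1)\cdots\exp(X_\ell)$ with $X_i\in\mathcal{V}_1$; concatenating the left-translated flow lines $t\mapsto\exp(tX_i)$ exhibits each point of $pH_\alpha$ as the endpoint of a curve from $p$ that is a finite union of left-translates of one-parameter subgroups generated by eigenvectors of the minimal eigenvalue $\lambda_1$, each of which is biLipschitz to $(\mathbb{R},\norm{\cdot}^{1/\lambda_1})$ by the computation in the proof of Theorem \ref{thm:computable-reachability-sets} and hence has finite positive $\lambda_1$-dimensional Hausdorff measure (in contrast to flow lines of non-eigenvectors, which acquire logarithmic corrections). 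For the converse one must show that the cosets $pH_\alpha$ are precisely the sets through $p$ reachable by such optimal curves; here I would follow \cite{avain:CPS} closely, using Lemma \ref{huom:preserved-subalgebra-good-quotient}, Proposition \ref{prop:CPS-H-dim-lemma} and Remark \ref{2017-09-12_kysymys} to pass to the successive quotients $K_j/K_{j-1}$, in each of which the induced derivation has minimal eigenvalue $>\lambda_1$ or, at eigenvalue $\lambda_1$, strictly shorter maximal Jordan chains than $\alpha$, so that an optimal curve projects to a constant and cannot leave the layer. Granting this metric characterisation of the family $\{pH_\alpha\}_{p\in N_1}$, item (iii) is immediate: a biLipschitz $F$ with $F(1_{N_1})=1_{N_2}$ transports the characterisation, so $F(H_\alpha)$ is the coset of $H_\beta$ through $1_{N_2}$, i.e.\ $F(H_\alpha)=H_\beta$, with Proposition \ref{lemma:CPS-corollary-about-normalisers} guaranteeing compatibility of the normaliser layers. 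The main obstacle is precisely this converse: turning ``optimal curve'' into a genuinely metric, biLipschitz-stable notion that isolates the $\mathcal{V}_1$-directions among all minimal eigendirections, and controlling the degeneration of the Jordan structure along the normaliser filtration; for this I would reproduce, in the present terminology, the relevant lemmas of \cite{avain:CPS}, while items (i), (ii) and the forward inclusion in (iii) are elementary.
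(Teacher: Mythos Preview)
Your arguments for (i) and (ii) are correct and are exactly what the paper does, only spelled out in more detail: the paper disposes of (i) in one line (``both claims are equivalent to $V_{\lambda_1}=\mathcal V_1$'') and of (ii) by the same Leibniz-plus-induction remark you give.

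For (iii) the paper gives no argument at all: it simply cites \cite{avain:carrasco-orlicz} and \cite[p.~6]{avain:CPS}. Your sketch is an attempt to reconstruct that cited result, and it is in the right spirit---the Orbit-Theorem forward inclusion is fine, and you correctly identify that the real content is a biLipschitz-stable metric characterisation of the cosets $pH_\alpha$ singling out the $\mathcal V_1$-directions among all $\lambda_1$-eigendirections. Be aware, though, that ``finite positive $\lambda_1$-dimensional Hausdorff measure'' is not quite the right invariant (flow lines of non-eigenvectors in $V_{\lambda_1}$ still have Hausdorff dimension $\lambda_1$); the references use finer gauge/modulus conditions to isolate the maximal Jordan blocks, and your plan to ``reproduce the relevant lemmas of \cite{avain:CPS}'' is exactly what is needed. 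So your proposal is sound, and for (iii) it actually goes beyond what the paper itself provides.
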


\begin{proof}
	The part (i) follows by observing that both of the claims are equivalent to the condition \( V_{\lambda_1} = \mathcal{V}_1 \).
	
	The part (ii) is proven by a straightforward induction on the length of a bracket in~\( \mathfrak{h}_\alpha \), using that \( \mathcal{V}_1 \) is preserved under \( \alpha \) by construction.
	
	The part (iii) is proven in \cite{avain:carrasco-orlicz}, see also \cite[p.\ 6]{avain:CPS}
\end{proof}

\begin{lause} \label{lause:abelianity_preserves_with_Seba}
	Let 
	\( (N_1,\alpha) \) and \( (N_2,\beta) \) be purely real homogeneous groups that are biLipschitz equivalent.
	If \( N_1 \) is Abelian, so is \( N_2 \). Consequently \( (N_1,\alpha) \) and \( (N_2,\beta) \) are isomorphic as homogeneous groups, by \cite{MR3180486}. 
\end{lause}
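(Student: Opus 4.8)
The plan is to reduce the claim to showing that $N_2$ is Abelian; once that is established, both $(N_1,\alpha)$ and $(N_2,\beta)$ are purely real homogeneous groups with Abelian nilradical, and the ``consequently'' part is exactly the biLipschitz rigidity of such groups proved by Xie \cite{MR3180486}. Since left translations are isometries of homogeneous distances, I would compose $F$ with translations to arrange $F(1_{N_1})=1_{N_2}$, and I would note $\dim N_1=\dim N_2$ since $F$ is a homeomorphism. I would then induct on this common dimension. If $(N_1,\alpha)$ is of Carnot type, then the quasisymmetric rigidity of Carnot-type boundaries recalled inside the proof of Proposition~\ref{go-to-the-boundary} (Pansu \cite{pansu}, Carrasco Piaggio \cite{avain:carrasco-orlicz}, Le Donne--Xie \cite{avain:LD-Xie_fibers}) forces $(N_2,\beta)$ to be isomorphic to $(N_1,\alpha)$ as a homogeneous group, so $N_2$ is Abelian. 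Hence I may assume $(N_1,\alpha)$ is \emph{not} of Carnot type, which by Proposition~\ref{prop:derivations_preserve_char-subalg}.(i) means the characteristic subgroup $H_\alpha$ of Definition~\ref{maar:characteristic_subgroup} is a \emph{proper} (and always nontrivial) subgroup of $N_1$.

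The core of the argument works with the characteristic subgroups $H_\alpha<N_1$ and $H_\beta<N_2$. By Proposition~\ref{prop:derivations_preserve_char-subalg}.(iii) every centred biLipschitz map of purely real homogeneous groups sends the first characteristic subgroup onto the second; applying this to the centred maps $L_{F(p)}^{-1}\circ F\circ L_p$ upgrades it to the coset statement $F(pH_\alpha)=F(p)H_\beta$ for all $p\in N_1$. Because $\mathfrak h_\alpha$ is $\alpha$-invariant (Proposition~\ref{prop:derivations_preserve_char-subalg}.(ii)), the subgroups $H_\alpha$ and $H_\beta$ are dilation-invariant, so $F$ restricts to a biLipschitz map between the homogeneous groups $(H_\alpha,\alpha|_{\mathfrak h_\alpha})$ and $(H_\beta,\beta|_{\mathfrak h_\beta})$, with Abelian source; the induction hypothesis then gives that $H_\beta$ is Abelian, hence $\mathfrak h_\beta$, being Lie-generated by eigenvectors of $\beta$ of smallest eigenvalue $\lambda_1$ but having all of its brackets trivial, is contained in the eigenspace $V^\beta_{\lambda_1}$. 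Moreover the coset statement together with Proposition~\ref{lemma:CPS-corollary-about-normalisers} makes $F$ preserve cosets of the iterated normalisers of $H_\alpha$ and $H_\beta$; since the normaliser of $H_\alpha$ in the Abelian group $N_1$ is all of $N_1$, the normaliser of $H_\beta$ in $N_2$ must likewise be all of $N_2$, i.e.\ $H_\beta$ is normal. Then, by Lemma~\ref{huom:preserved-subalgebra-good-quotient}, the quotients $N_1/H_\alpha$ and $N_2/H_\beta$ are homogeneous groups, $F$ descends to a biLipschitz map between them (comparing the quotient distances via the coset statement), the source is Abelian, and the dimension dropped because $H_\beta$ is nontrivial; so the induction hypothesis gives that $N_2/H_\beta$ is Abelian, i.e.\ $[\mathfrak n_2,\mathfrak n_2]\subseteq\mathfrak h_\beta\subseteq V^\beta_{\lambda_1}$. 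On the other hand, since $[V^\beta_\mu,V^\beta_\nu]\subseteq V^\beta_{\mu+\nu}$ for generalised eigenspaces and every eigenvalue of $\beta$ is at least $\lambda_1>0$, one has $[\mathfrak n_2,\mathfrak n_2]\subseteq\bigoplus_{\mu\ge 2\lambda_1}V^\beta_\mu$. As $\lambda_1<2\lambda_1$, these two inclusions force $[\mathfrak n_2,\mathfrak n_2]=0$, so $N_2$ is Abelian; this closes the induction, and \cite{MR3180486} finishes the proof.

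I expect the main obstacle to be the bookkeeping inside the inductive step: checking that the restricted and descended maps are genuinely biLipschitz for the homogeneous distances induced on dilation-invariant subgroups (by restriction) and on quotients (by Lemma~\ref{huom:preserved-subalgebra-good-quotient}), and that these sub- and quotient structures are again purely real homogeneous groups to which the induction hypothesis applies. The one genuinely substantive step is deducing the normality of $H_\beta$ from the triviality of normalisers in $N_1$ through Proposition~\ref{lemma:CPS-corollary-about-normalisers}, since it is this that makes the quotient $N_2/H_\beta$ available; everything else is formal once the characteristic-subgroup apparatus of \cite{avain:CPS} encoded in Proposition~\ref{prop:derivations_preserve_char-subalg} is granted.
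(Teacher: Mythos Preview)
Your proposal is correct and follows essentially the same architecture as the paper's proof: induction on dimension, the Carnot-type case handled by \cite{pansu,avain:carrasco-orlicz}, preservation of characteristic subgroups via Proposition~\ref{prop:derivations_preserve_char-subalg}.(iii), Abelianity of $H_\beta$ by induction on the subgroup, normality of $H_\beta$ via Proposition~\ref{lemma:CPS-corollary-about-normalisers}, and Abelianity of the quotient $N_2/H_\beta$ by induction. You are also more explicit than the paper about the coset statement $F(pH_\alpha)=F(p)H_\beta$, which is in fact the genuine hypothesis needed to invoke Proposition~\ref{lemma:CPS-corollary-about-normalisers} and to descend $F$ to the quotients.

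The only difference is in the closing step. The paper first shows $\mathfrak h_\beta$ is \emph{central} (using $\mathfrak h_\beta\subset V_{\lambda_1}$ together with the grading), then splits $\mathfrak n_2=\mathfrak h_\beta\times\mathfrak s_\beta$ as a direct product of ideals, identifies $S_\beta$ with the Abelian quotient $N_2/H_\beta$, and concludes that a direct product of two Abelian groups is Abelian. Your endgame is shorter: from $[\mathfrak n_2,\mathfrak n_2]\subseteq\mathfrak h_\beta\subseteq V^\beta_{\lambda_1}$ and $[\mathfrak n_2,\mathfrak n_2]\subseteq\bigoplus_{\mu\ge 2\lambda_1}V^\beta_\mu$ you read off $[\mathfrak n_2,\mathfrak n_2]=0$ directly, bypassing centrality and the complementary ideal $\mathfrak s_\beta$. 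Both routes rest on the same two facts ($\mathfrak h_\beta\subset V_{\lambda_1}$ and the grading property $[V_\mu,V_\nu]\subset V_{\mu+\nu}$), so this is a streamlining rather than a genuinely different idea.
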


\begin{proof}
	We prove the claim inductively on the topological dimension of the groups in question. 
	The case \( n=1 \) (and also \( n=2 \)) is true due to the lack of non-Abelian nilpotent groups. So assume the claim holds for groups of dimension \( k \) and less and 
	\begin{equation} \label{eq:formula_of_dimension}
	\dim(N_1) = \dim(N_2) =k+1 \piste
	\end{equation}
	
	Let \( F  \colon N_1 \to N_2  \) be a biLipschitz map, which after post-composing with a left-translation we may assume to satisfy \( F(1_{N_1}) = 1_{N_2} \). 
	Thus, when \( H_\alpha \) and \( H_\beta \) denote the respective characteristic subgroups,  by Proposition \ref{prop:derivations_preserve_char-subalg}.(iii) it holds 
	\begin{equation} \label{eq:Falphabeta}
	F ( H_\alpha) = H_\beta \piste
	\end{equation}
	If \(  H_\alpha = N_1 \), then by Proposition \ref{prop:derivations_preserve_char-subalg}.(i) 
	the homogeneous group \( (N_1, \alpha) \) is of Carnot type,
	and as a consequence of \cite[Theorem 1.9]{avain:carrasco-orlicz} the homogeneous group \( (N_2,\beta) \) is also of Carnot type. In this case, by Pansu's Theorem \cite{pansu}, \( (N_1, \alpha) \) and \( (N_2, \beta) \) are isomorphic as homogeneous groups.
	We are left to consider the case 
	\begin{equation} \label{eq:Halpanotequaln1}
	H_\alpha \subsetneq N_1 \piste
	\end{equation}

	From \eqref{eq:formula_of_dimension} and \eqref{eq:Halpanotequaln1} we have \( \dim(H_\alpha) \le k \).
	Thus the induction assumption and \eqref{eq:Falphabeta} gives that \(H_\beta \) is Abelian because \( H_\alpha \) is Abelian.
	
	Moreover we claim that \( H_\beta \) is normal in \( N_2 \). Indeed, the normaliser of \( H_\alpha \) is \( N_1 \) since \( N_1\) is Abelian, hence by	Proposition \ref{lemma:CPS-corollary-about-normalisers} the normaliser of \( H_\beta \) is \( N_2 \).

	Next, we claim \( H_\beta \) is central in \( N_2\).
	By the definition of the characteristic subalgebra
	we have \( \mathfrak{h}_\beta = \pp{LieSpan}(\mathcal{V}_1) \), where \( \mathcal{V}_1 \subset V_{\lambda_1} \), as in Definition \ref{maar:characteristic_subgroup}. We know now that \( \mathfrak{h}_\beta \) is an Abelian ideal, so \( \mathcal{V}_1 \) is Abelian and \( \mathfrak{h}_\beta \subset V_{\lambda_1} \). Using the grading given by the generalised eigenspaces \( V_\lambda \) of \( \beta \)  (see \cite[p. 16 Prop. 12]{bourbaki-lie-7-9-fr}) and the fact that \( \mathfrak{h}_\beta \) is an ideal of \( \mathfrak{n}_2 \)  we get for all \( H \in \mathfrak{h}_\beta \) and \( X \in \mathfrak{n}_2 \) that
	\[ [X,H] \in \mathfrak{h}_\beta \cap \bigoplus_{\lambda > \lambda_1} V_\lambda = \{0\} \piste \]
	Hence \( \mathfrak{h}_\beta \) is central in \( \mathfrak{n}_2 \).
	
	Take \( W \) to be a complementary subspace to \( \mathcal{V}_1 \) inside \( V_{\lambda_1} \). 
	Define
	\[ \mathfrak{s}_{\beta} = W \oplus \bigoplus_{\lambda > \lambda_1} V_\lambda \pilkku \]
	which is an ideal because it contains \( [\mathfrak{n}_2,\mathfrak{n}_2] \). The subspaces \( \mathfrak{s}_{\beta} \) and \( \mathfrak{h}_{\beta} \) are in direct sum and they are both ideals, so the Lie algebra \( \mathfrak{n}_2 \) is the direct product of these two subalgebras: \( \mathfrak{n}_2 = \mathfrak{s}_{\beta} \times  \mathfrak{h}_{\beta} \). On the \( N_1 \) side, the same construction works but it is simpler because \( N_1 \) is Abelian. Anyway, we may decompose \( \mathfrak{n}_1 = \mathfrak{h}_\alpha \times \mathfrak{s}_\alpha \), where \( \mathfrak{s}_\alpha \) is an arbitrary
	complementary subspace to \( \mathfrak{h}_\alpha \).

	By Proposition \ref{prop:derivations_preserve_char-subalg}.(ii) and the concrete formula for a homogeneous distance on the quotient given in Lemma \ref{huom:preserved-subalgebra-good-quotient}, we have that the quotient groups 
	\( N_1/H_\alpha \) and \( N_2/H_\beta \) are biLipschitz equivalent purely real homogeneous groups. Their dimension is at most \( k \), since the characteristic subgroups have at least dimension 1. 
	Hence by induction, \( N_2/H_\beta \) is Abelian since \( N_1/H_\alpha \) is Abelian. 
	By the structure of direct products, \( N_2/H_\beta \) and \( S_\beta \) are isomorphic as Lie groups, hence \( S_\beta \) is Abelian.
	Since \( N_2 \) is a direct sum of two Abelian normal subgroups \( H_\beta \) and \( S_\beta \), then \( N_2 \) is Abelian.
	
	For the final statement, \cite[Theorem 1.1]{MR3180486} tells that the Jordan forms of \( \alpha \) and \( \beta\) are proportional. On the other hand, since the homogeneous groups are biLipschitz equivalent, the smallest of the eigenvalues of \( \alpha \) and \( \beta \) must agree since by Proposition \ref{prop:CPS-H-dim-lemma} we have that the common smallest eigenvalue \( \lambda_1 \) is the minimal Hausdorff dimension of curves. Therefore the Jordan forms of \( \alpha \) and \( \beta \) agree and this is enough to give an isomorphism of homogeneous groups in the Abelian case.
\end{proof}


\begin{huom}
	The proof above does not give a new proof of the main result of \cite{MR3180486}, since it may happen that the complementary subspace \( W \) cannot be chosen to be preserved under the derivation \( \beta \). Therefore, while \( N_2 / H_\beta \) has a structure of a homogeneous group induced by \(\beta \), the subgroup \( S^\beta \) is not preserved under \( \beta\) and does not inherit a structure of a homogeneous group.
\end{huom}

\begin{huom}
	Theorem \ref{lause:abelianity_preserves_with_Seba} may also be proven 
	from  Theorem \ref{thm:computable-reachability-sets2}.(ii) by an argument that we will next sketch, thereby giving a third proof for Theorem \ref{lause:abelianity_preserves_with_Seba}. A homogeneous group \( (N,\alpha) \) is non-Abelian if and only if for some \( s > 0 \) the reachability set \( (N,\alpha)^s \) is strictly larger than the subgroup corresponding to \( \bigoplus_{0 < \lambda \le s} V_\lambda \).
	Suppose that homogeneous groups \( (N_1,\alpha) \) and \( (N_2,\beta) \) are biLipschitz equivalent.
	On the one hand, the characteristic polynomials of \( \alpha \) and \( \beta \) agree by \cite{avain:CPS}, and hence the dimensions of the generalised eigenspaces of the same eigenvalues agree.
	On the other hand, \( (N_1,\alpha)^s \) and \( (N_2,\beta)^s \) have the same dimension for every \(s\) by Theorem \ref{thm:computable-reachability-sets2}.(ii).
	We conclude that, if \(N_1\) is Abelian, then also \(N_2\) is Abelian.
\end{huom}

\bibliography{viittaukset}
\bibliographystyle{amsalpha}
\end{document}